\documentclass[12pt]{amsart}
\usepackage{fullpage}
\usepackage{bm}
\usepackage{amsmath, amsthm, amscd, amssymb, amsfonts, latexsym}
\usepackage{mathrsfs}
\usepackage{bbm}
\usepackage{dsfont}
\usepackage{bbm}
\usepackage{relsize}
\usepackage{comment}
\usepackage{xcolor}
\usepackage[mathscr]{euscript}
\usepackage{upgreek}
\usepackage[hidelinks]{hyperref}

\newtheorem{theorem}{Theorem}[section]
\newtheorem{lemma}[theorem]{Lemma}
\newtheorem{proposition}[theorem]{Proposition}
\newtheorem{corollary}[theorem]{Corollary}
\newtheorem{conjecture}[theorem]{Conjecture}

\theoremstyle{definition}
\newtheorem{remark}[theorem]{Remark}

\numberwithin{equation}{section}

\DeclareMathOperator{\re}{Re}
\DeclareMathOperator{\im}{Im}

\newcommand{\A}{{\color{red}AAA}}
\newcommand{\Q}{\mathbb{Q}}
\newcommand{\R}{\mathbb{R}}
\newcommand{\Z}{\mathbb{Z}}
\newcommand{\F}{\mathbb{F}}
\newcommand{\N}{{\rm N}}
\newcommand{\C}{\mathbb{C}}
\newcommand{\p}{\mathfrak{p}}

\newcommand{\sgn}{\mbox{sgn}}

\newcommand{\G}{\mathcal{G}}
\newcommand{\rad}{\mathrm{rad}}
\newcommand{\quartic}[2]{\left( \frac{#1}{#2} \right)_4}

\newcommand{\oquartic}[2]{\overline{\left( \frac{#1}{#2} \right)}_4}

\newcommand{\hecke}[1]{\frac{\overline{#1}}{| #1 |}}
\newcommand{\ppmod}[1]{\hspace{-0.15cm}\pmod{#1}}

\newcommand{\sumsplit}{\sum_{\substack{p \equiv 1 \bmod 4\\ p = \pi\overline{\pi} \\ \pi \equiv 1 \bmod (\lambda^3)\\ k \geq 1}}}
\newcommand{\newsumsplit}{ \sum_{\substack{\N(\pi) = p \equiv 1 \bmod 4\\ \pi \equiv 1 \bmod (\lambda^3) \\ k \geq 1}}}

\def\fp{\mathfrak{p}}

\def\fp{\mathfrak{p}}

\def\cF{\mathcal{F}}

\def\Tr{\mathrm{Tr}}
\def\tr{\mathrm{Tr}}

\renewcommand{\d}{\mathrm{d}}
\newcommand{\dd}{\mathrm{d}}

\newcommand{\kommentar}[1]{}

\title{Average analytic rank for the $L$-functions of the elliptic curves $y^2=x^3-dx$}

\author{Chantal David}
\address{Department of Mathematics and Statistics, Concordia University\\
1455 de Maisonneuve West, Montreal, H2G 1M8, Qu\'ebec, Canada.
}
\email{chntl.david@gmail.com}

\author{Lucile Devin}
\address{Univ. Littoral C\^ote d'Opale, UR 2597
	LMPA, Laboratoire de Math\'ematiques Pures et Appliqu\'ees Joseph Liouville,
	F-62100 Calais, France
}
\address{CNRS -- Université de Montréal CRM - CNRS}
\email{lucile.devin@univ-littoral.fr} 
	
\author{Alessandro Fazzari}
\address{DIMA - Dipartimento di Matematica, Via Dodecaneso, 35, 16146 Genova, Italy}
\email{alessandro.fazzari@unige.it}

\author{Ezra Waxman}
\address{Unit of Mathematics, Afeka — The Academic College of Engineering in Tel Aviv, Mivtsa Kadesh St 38, Tel Aviv-Yafo 6998812, Israel}
\address{Department of Mathematics, University of Haifa, 199 Aba Khoushy Ave., Mt. Carmel, Haifa 3498838, Israel}
\email{ezraw@afeka.ac.il}

\begin{document}

\begin{abstract} 
We study the average analytic rank in the family of $L$-functions $L(s, E_d)$ associated with the elliptic curves $E_d : y^2=x^3-dx$, as $d$ varies over fourth-power-free odd integers. Since this is a family of curves with complex multiplication, we have $L(s, E_d)=L(s - \frac12, \xi_d)$, where $\xi_d$ is a Hecke character over $\Z[i]$. Assuming the Generalized Riemann Hypothesis, we compute the one-level density of the low-lying zeros of this family for test functions whose Fourier transform is supported in $(-\frac35, \frac35)$. As a consequence, we obtain the upper bound $\frac{13}{6}$ for the average analytic rank $r(E_d)$ over the family. Under the additional assumption of a conjecture on the distribution of quartic Gauss sums at prime elements (a quartic analogue of Patterson's conjecture for cubic Gauss sums), we extend the admissible support to~$(-1, 1)$ and improve the upper bound for the average analytic rank to~$\frac32$.
Both results imply that a positive proportion of twists satisfy $r(E_d) =1$, while the second also yields a positive proportion of twists with $r(E_d)=0$.\end{abstract}

\maketitle

\section{Introduction}

Let $E$ be an elliptic curve over $\Q$ with $L$-function $L(s, E)$. The analytic rank of $E$, denoted by $r(E)$, is the order of vanishing of $L(s, E)$ at $s=1$. According to the Birch and Swinnerton-Dyer conjecture,  $r(E)$ is equal to the rank of the finitely generated abelian group $E(\Q)$. Given an elliptic curve $E: y^2 = x^3 + ax + b$, the curve $E'_d: dy^2 = x^3 + ax +b$ is called a \textit{quadratic twist} of $E$. The parity of $r(E'_{d})$ is determined by the root number, and as $d$ runs over the square-free integers, asymptotically half of the quadratic twists have root number $+1$ and half have root number $-1$. Goldfeld's \textit{minimalist} conjecture \cite{Goldfeld} predicts that the rank of almost all quadratic twists is as small as possible while respecting this parity restriction, i.e. that the average analytic rank across this family is~$\tfrac{1}{2}$.

Under the assumption of the Generalized Riemann Hypothesis (GRH), Goldfeld proved that the average rank is bounded from above by $\tfrac{13}{4}$. This bound was later improved by Heath-Brown \cite{HB}, who showed, again under GRH, that the average analytic rank is at most $\tfrac{3}{2}$ in each of the two subfamilies of quadratic twists with positive and negative root numbers, respectively. From this it follows that at least $\frac14$ of the quadratic twists with positive root number have analytic rank~$0$, and that at least $\frac34$ of those with negative root number have analytic rank~$1$. Function field analogues were established in~\cite{comeau-lapointe,Meisner-Sodergren}, while analogous results for quadratic twists of a general modular form of weight $k$ and level $1$ were obtained in~\cite{GZ-Compositio}. Heath-Brown's results in~\cite{HB} follow from proving that the Katz--Sarnak universality conjecture \cite{KSbook,Katz-Sarnak,SST} for the one-level density of the family of $L$-functions $\{L(s, E_{d}'): d \textnormal{ square-free}\}$ holds for test functions $\phi$ such that supp$(\widehat{\phi})\subseteq (-1, 1)$. Some lower-order terms for the one-level density were obtained in~\cite{FPS}.
A full proof of the universality conjecture would imply Goldfeld's minimalist conjecture. It would also follow from a very strong hypothesis (implying GRH) on the distribution of the zeros of the $L$-functions \cite{Fiorilli-2016}.

In a series of breakthrough papers, Smith \cite{SmithJAMS1, SmithJAMS2, Smith BSD implies Goldfeld} proved that in the family of quadratic twists the average $2^\infty$-Selmer rank (which bounds from above the average algebraic rank) is~$\frac12$. Assuming the Birch and Swinnerton-Dyer conjecture, this implies Goldfeld's conjecture. 
More recently, Burungale and Tian~\cite{Burungale-Tian} showed that half of the quadratic twists $d y^2 = x^3 - x$ of the congruent number elliptic curve have analytic rank~$0$, without using the Birch and Swinnerton-Dyer conjecture. Their argument combines a $p$-converse theorem with Smith's work.

For elliptic curves over $\Q$, there exist only two families of higher order twists up to isomorphism: the cubic twists $y^2 = x^3 + d$ (where $d \in \Z$ is cube-free) and the quartic twists $y^2=x^3 - dx$ (where $d \in \Z$ is fourth-power-free), both of which have {\it complex multiplication}. Their associated $L$-functions are $L$-functions of Hecke characters over $\Q(\sqrt{-3})$ and $\Q(\sqrt{-1})$, respectively. 
The analogue of Goldfeld's minimalist conjecture predicts that the average rank across each of these families is $ \tfrac{1}{2}$.
For the cubic family, Koymans and Smith \cite{Koymans-Smith} proved that the proportion of curves $y^2 = x^3 + dn^2$ (with $d$ fixed and $n$ ranging over the integers) with {\it algebraic rank}~0 is greater than $0.3195$. This improves upon the previous lower bound of~$\tfrac{1}{6}$ obtained by Alp\"oge, Bhargava and Shnidman~\cite{Al-Bh-Sc}, who also obtained results for rank~$1$ curves in this family. Assuming the Birch and Swinnerton-Dyer conjecture, these results imply the same respective lower bound for the proportion of curves of analytic rank~$0$.

For the quartic twists $y^2=x^3-dx$, Castillo, de Faveri and Dunn~\cite{CDD} computed the first two {\it mollified moments} of the associated $L$-functions when $d \in \Z[i]$ is square-free, and obtained a positive proportion of analytic rank 0, without any hypothesis. Their work is a quartic analogue of the results of~\cite{Sound-quad} for Dirichlet quadratic characters over~$\Q$, and of~\cite{DDDS} for Dirichlet cubic characters over $\Q(\sqrt{-3})$, since the $L$-functions quartic twists are Hecke $L$-functions over $\Z[i]$, and the averaging is performed over all square-free $d \in \Z[i]$. The results of~\cite{CDD} and~\cite{DDDS} were generalized to Hecke characters of general order $r$ in~\cite{DIPP}.

In this paper, we study the one-level density {of} the family of elliptic curves $E_d : y^2 = x^3 - dx$, when the parameter {ranges} over fourth-power-free $d \in \Z$, together with applications to average analytic rank and the proportion of non-vanishing at the central point. Since the $L$-functions $L(s, E_d)$ are given by  Hecke characters over $\Z[i]$, this leads to unbalanced sums between $\Z$ and $\Z[i]$.
All of our results are conditional on GRH.

\subsection{Statement of the main results}

For each odd integer $d$, let $E_d$ be the elliptic curve with affine equation $y^2=x^3 - dx.$ Writing $d = d_0^4 d_1$, {with} $d_1$ fourth-power-free, the conductor of $E_d$ is $N_{E_d} = 2^\ell \rad(d_1)^2$, where $\ell =5$ if $d_1 \equiv 1 \bmod 4$ and $\ell =6$ if $d_1 \equiv 3 \bmod 4$. The associated $L$-function is defined by 
\begin{align*} 
	L(s, E_d) &:= \prod_{p \nmid 2d} \left( 1 - \frac{a_p(E_d)}{p^s} + \frac{1}{p^{2s-1}} \right)^{-1}, \quad \text{for } \mathrm{Re}(s) > \tfrac{3}{2},
\end{align*}
where, for each prime $p \nmid 2d$, one sets $a_p(E_d):= p + 1 - \# E_d(\F_p) $.
The curves $E_d$ are called {\it quartic twists} of $E_1 : y^2 = x^3 - x$, and 
$L(s, E_d) = L(s-\frac{1}{2}, \xi_d)$, where $\xi_d$ is a {\it quartic Hecke character} over $\Z[i]$. 
Let $W(E_d) = \pm 1$ denote the root number of $E_d$, that is, the sign of the functional equation relating $L(E_d, s)$ to $L(E_d, 2-s)$. The value of $W(E_d)$ is explicitly computed in Lemma~\ref{lemma-sign-s1}. Let $r(E_d)$ denote the analytic rank of $E_d$. Throughout the paper, the term rank refers to the analytic rank unless specified otherwise.
Let also
\begin{align*}
	\cF &= \{ d \in \Z: d \text{ is odd and fourth-power-free} \} \\
	\cF^{\pm} &= \{ d \in \Z: d \text{ is odd, fourth-power-free, and } W(E_d) = \pm 1\} .
\end{align*}
For convenience, we write $\cF^*$ for any of the three families $\cF, \cF^+$, or $\cF^{-}$.

Let $w$ be a test function, not identically zero, satisfying the following conditions:
\begin{equation}\label{w-conditions}\begin{split}
	(i)\;& w \text{ is a non-negative Schwartz function and } w(0)=0,\\
	(ii)\;& \textstyle \text{the Fourier transform } x\mapsto \widehat{w}(x):= \int_{-\infty}^\infty w(t) e^{-2 \pi i t x} \d t \text{ is supported in } (-\eta,\eta).
\end{split}\end{equation}
We define the total weight
\begin{align*}
	\mathcal{S}_\mathcal{F^*}(w, D) &:= \sum_{\substack{d \in \mathcal{F^*}}} w \left( \frac{d}{D} \right).
\end{align*} 
We have, by Lemma~\ref{F-with-root-number},
$$
\mathcal{N}_{\cF}(D) := \# \{ 1 \leq d \leq D \;:\; d \in \cF \} \sim \frac{8}{15\zeta(4)} D,$$
and the density of $\cF^+$ or $\cF^-$ in $\cF$ is $\tfrac12$.
Then, 
\begin{align}\label{Eq total sum}
	\mathcal{S}_{\cF}(w, D) 
	=\int_{0}^{\infty}w \left( \frac{t}{D} \right)\d \mathcal{N}_\cF(t)
	+\int_{0}^{\infty}w \left( \frac{-t}{D} \right)\d \mathcal{N}_\cF(t)
	= \frac{8\widehat w(0)}{15\zeta(4)} D + o(D).
\end{align}
In particular, $\mathcal{S}_{\cF^*}(w,D)\asymp_w D$.

The main result of this paper is the following theorem.
\begin{theorem} \label{main-theorem} 
	Let $w$ satisfy the conditions in \eqref{w-conditions}, and assume that $L(s, E_d)$ satisfies the Riemann Hypothesis for all $d \in\cF^*$. Then 
	\begin{align*}
		\limsup_{D\to \infty}	\frac{1}{\mathcal{S}_\mathcal{F^*}(w, D)} \sum_{d \in \cF^{*}} r(E_d) w \left( \frac{d}{D} \right)
		\leq\frac{13}{6}.
 	\end{align*}
	Assuming also a Patterson-type conjecture on the distribution of quartic Gauss sums at prime elements (see Conjecture \ref{conj-DDHL}), we have 
	\begin{align*}
		\limsup_{D\to \infty}	\frac{1}{\mathcal{S}_\mathcal{F^*}(w, D)} \sum_{d \in \cF^{*}} r(E_d) w \left( \frac{d}{D} \right)
		\leq\frac{3}{2}.
 	\end{align*}
\end{theorem}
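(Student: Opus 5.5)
The plan is the standard deduction from a one-level density. Set $\mathcal{D}(\phi,d)=\sum_{\gamma_d}\phi\bigl(\gamma_d\frac{\log X}{2\pi}\bigr)$, where $\gamma_d$ runs over ordinates of nontrivial zeros $\frac12+i\gamma_d$ of $L(s,\xi_d)$, i.e.\ zeros of $L(s,E_d)$ shifted to the critical line. The scaling parameter $X\asymp D^2$ reflects that the conductor of $\xi_d$ over $\Z[i]$ has norm $\asymp d^2$, since $N_{E_d}\asymp \rad(d_1)^2$. Take an even nonnegative test function $\phi$ with $\phi(0)=1$ and $\mathrm{supp}\,\widehat\phi\subseteq(-\sigma,\sigma)$. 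Under RH all $\gamma_d$ are real, so $r(E_d)\le \mathcal{D}(\phi,d)$. Averaging against $w(d/D)$ then gives
\[
\frac{1}{\mathcal{S}_{\cF^*}(w,D)}\sum_{d\in\cF^*} r(E_d)\,w\!\left(\frac dD\right)\le \frac{1}{\mathcal{S}_{\cF^*}(w,D)}\sum_{d\in\cF^*}\mathcal{D}(\phi,d)\,w\!\left(\frac dD\right).
\]
The main work is to evaluate the right side as $D\to\infty$.

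First step: the explicit formula. Apply Weil's explicit formula to $L(s,\xi_d)$. The archimedean and conductor terms produce $\widehat\phi(0)$, after normalizing $\log X$ by the average log-conductor. The prime sums are sums over Gaussian primes $\pi$ of $\xi_d(\pi^k)\log \N\pi\,\widehat\phi(\log \N\pi^k/\log X)/\N\pi^{k/2}$. Squares of quartic characters are quadratic, so the $k=2$ terms contribute $\mp\frac12\phi(0)$-type terms. The sign depends on whether one averages over $\cF^+$ or $\cF^-$, with the root number computed from Lemma~\ref{lemma-sign-s1}. This yields the symplectic or orthogonal densities $\widehat\phi(0)\pm\frac12\phi(0)$. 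The terms with $k=1$ and $k\ge3$ must be shown to be $o(1)$ on average, the latter trivially.

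Second step: the $k=1$ average, which I expect to be the main obstacle. We need
\[
\sum_{\pi}\frac{\log \N\pi}{\sqrt{\N\pi}}\,\widehat\phi\!\left(\frac{\log \N\pi}{\log X}\right)\sum_{d\in\cF^*}\xi_d(\pi)\,w\!\left(\frac dD\right)=o(D).
\]
Here $\xi_d(\pi)$ is essentially a quartic residue symbol $\bigl(\frac{d}{\pi}\bigr)_4$ times a fixed character of $\pi$. The family $\cF^*$ is handled by removing the fourth-power-free condition via Möbius and, for $\cF^\pm$, the root-number condition via congruence or character conditions on $d$ (Lemma~\ref{F-with-root-number}). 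Then apply Poisson summation in the rational integer $d$ modulo $\N\pi=p$. The zero frequency vanishes for non-real $\pi$, so only split primes matter. The dual terms are quartic Gauss sums $g(\pi)$ times $\widehat w(kD/p)$, which forces $p\ll D^{1+\varepsilon}$. This is where the unbalanced $\Z$ versus $\Z[i]$ structure appears. With Weil's bound $|g(\pi)|=\sqrt p$ one gets an error of size about $X^{\sigma/2}\cdot D^{-1/2}\cdots$. Balancing this against the range of $p$, and using GRH for the $d$-sum over small $p$, should give admissible support $\sigma<\frac35$; I would tune the cutoff between the trivial/GRH regime and the Poisson regime to reach exactly this exponent. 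Under Conjecture~\ref{conj-DDHL}, the sum of $g(\pi)\overline{\chi}(\pi)$ over primes $\pi$ with $\N\pi\le Y$ exhibits cancellation of size $Y^{1+\frac12-\delta}$ instead of $Y^{3/2}$. Feeding this into the dual sum extends the support to $\sigma<1$.

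Final step: optimization. Evaluating the density gives average rank $\le \frac{1}{\sigma}\pm\frac12$ for the Fourier-pair choice $\widehat\phi=$ the Fejér-type triangle (or Plancherel-optimal functions in the orthogonal case) supported in $[-\sigma,\sigma]$. For the full family the $\pm$ contributions cancel, and the bound is $\frac1\sigma+\frac12$ in the worst subfamily. With $\sigma=\frac35$ this is $\frac53+\frac12=\frac{13}{6}$, and with $\sigma\to1$ it is $\frac32$. Taking $\limsup$ and letting $\sigma$ approach the endpoint gives both bounds. The ratio of $\mathcal{S}_{\cF^\pm}$ to $\mathcal{S}_\cF$ is $\frac12$, so the bounds transfer between $\cF$ and $\cF^\pm$.
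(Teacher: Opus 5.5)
Your outline (explicit formula, M\"obius sieve for the fourth-power-free condition, Poisson in $d$, Fej\'er kernel) follows the paper's architecture. However, the step that actually produces the exponent $\frac35$ is missing, and your second step cannot supply it. After Poisson, $\widehat w$ restricts the dual variable to $|m|\ll p/D$; it does not force $p\ll D^{1+\varepsilon}$. For $p\ll D$ only $m=0$ survives, and $g_4(0,\pi)=0$, so the whole difficulty lies in the range $D\ll p\le D^{2\nu}$ (with $\mathrm{supp}\,\widehat\phi\subset(-\nu,\nu)$, your $\sigma$). There, $|g_4(m,\pi)|\le\sqrt p$ gives $\sum_d \left(\frac{d}{\pi}\right)_4 w(d/D)\ll\sqrt p$. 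Summing $\frac{\log p}{\sqrt p}\cdot\sqrt p$ over $p\le D^{2\nu}$ gives $D^{2\nu}$ against the required $O(D)$, which is exactly the P\'olya--Vinogradov range $\nu<\frac12$ of Remark~\ref{Remark TO Prove 2}. No choice of cutoff helps. Any bound that treats each $\pi$ separately stalls at $\nu=\frac12$, even square-root cancellation in the $d$-sum (which GRH for $L(s,E_d)$ does not give you anyway).

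To go beyond $\frac12$ you need cancellation in the sum over Gaussian prime powers of $g_4(r,\pi)\,\overline{\pi}/|\pi|$, uniformly in the shift $r=8i^{3(\ell+1)}M^3m$; this is Proposition~\ref{BoundForHAfterLindelof}. The paper obtains it by applying Vaughan's identity to $H_\beta(X,Y,r)$.
\begin{itemize}
\item The Type~II sums are bounded by factoring $g_4(r,vw)=\pm(\frac{w}{v})_2\,g_4(r,v)g_4(r,w)$ and applying the quadratic large sieve (Proposition~\ref{TypeII_bound}).
\item The Type~I sums require a Lindel\"of-on-average bound for the metaplectic Dirichlet series $\mathcal{G}_\beta(s,ha^2)$ over square-free $a$ (Proposition~\ref{prop-Lindelof-on-average}), because convexity is too weak.
\end{itemize}
The value $\frac35$ comes from optimizing $U=D^\theta$ in the resulting bound, not from balancing Weil-bound estimates. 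Likewise, the conditional part needs the full square-root cancellation $\N(r)^{\varepsilon}X^{1/2+\varepsilon}$ of Conjecture~\ref{conj-DDHL}; a fixed power saving $\delta<\frac12$ would not reach $\nu\to1$.

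Your first step also misidentifies the main term.
\begin{itemize}
\item The split $k=2$ terms give no main term. Here $\xi_d^2((\pi))=\left(\frac{d}{p}\right)(\pi/|\pi|)^2$ still carries a Legendre symbol in $d$ and a nontrivial Gr\"ossencharacter. These terms are part of $U_{\mathrm{split}}$, which is shown to be negligible on average.
\item The term $\frac12\phi(0)=\frac12\int\widehat\phi$ comes instead from the inert primes $p\equiv 3\bmod 4$, where $\xi_d((p))=-1$ for every $p\nmid d$.
\item Consequently the density is $\widehat\phi(0)+\frac12\phi(0)$ for each of $\cF$, $\cF^{+}$, $\cF^{-}$; the paper proves it for every class $\cF_{a,b}$. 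There is no $\pm$ depending on the root number, since the $O$, $SO(\mathrm{even})$ and $SO(\mathrm{odd})$ densities agree for $\mathrm{supp}\,\widehat\phi\subset(-1,1)$.
\item A symplectic density $\widehat\phi(0)-\frac12\phi(0)$ is false here. On $\cF^{-}$, where all ranks are odd, it would give average rank $\le\frac12$ as $\nu\to1$. For $\cF$ the two signs do not cancel either.
\end{itemize}
Your final numbers $\frac1\nu+\frac12$ are right only because you fall back on the $+$ sign. Also, a bound for $\cF$ does not transfer to $\cF^{\pm}$, since it loses a factor $2$; each subfamily must be treated directly, as the paper does.
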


Our strategy to bound the average rank is to understand the behavior of the (properly normalized) low-lying zeros across the families $\cF^{*}$. Let $\phi$ be an even Schwartz function whose Fourier transform $\widehat{\phi}$ is compactly supported.
For any $d \in \mathcal{F}$ and $D >1$, we define
\begin{align}\label{13Jan.1}
	\mathcal{D}(\phi, E_d) :&= \sum_{L(1+ i \gamma, E_d)=0} \phi \left( \frac{\gamma \log{D}}{\pi} \right).
 \end{align}
Moreover, we introduce the following finite weighted averages:
\begin{align*}
	\mathscr{D}_{\cF^*}(\phi, w, D) :&= \frac{1}{\mathcal{S}_{\cF^*}(w, D) } 
	\sum_{d \in \mathcal{F^*} } \mathcal{D}(\phi, E_d)w\left(\frac{d}{D}\right) .
\end{align*}
The {\it one-level density} is then defined to be the limit of $\mathscr{D}_{\cF^*}(\phi, w, D)$ as $D\to\infty$. 
We remark that we approximate the conductor of each $L(s, E_d)$ by $D^2$ when normalizing $\gamma$. This is justified by Lemma \ref{Lem Conductor condition}, which shows that 
computing the one-level density with this approximation, or with the true conductor, yield the same limit.

Theorem~\ref{main-theorem} follows from computing the one-level density across the families $\cF, \cF^{+}$ and~$\cF^{-}$.
\begin{theorem} \label{main-theorem OLD} 
	Let $w$ satisfy the conditions in \eqref{w-conditions}, and let $\phi$ be an even Schwartz function such that the support of $\widehat{\phi}$ is contained in $(- \nu, \nu)$. 
	Assume that $L(s, E_d)$ satisfies the Riemann Hypothesis for all $d \in \cF^*$.
	Then, as $D \rightarrow \infty$,
	\begin{align*}
		\mathscr{D}_{\cF^*}(\phi, w,D)
		=\widehat{\phi}(0) + \frac{1}{2} \int_\R \widehat{\phi}(u) \d u+ O \left( \frac{1}{\log{D}} \right),
	\end{align*}
	provided that $\nu < \frac35$. \\
	Assuming also Conjecture~\ref{conj-DDHL}, the support condition can be relaxed to~$\nu < 1$.
\end{theorem}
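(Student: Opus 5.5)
The plan is the standard route: apply the explicit formula to each $L(s,\xi_d)$, average over $d$ with weight $w(d/D)$, and show that only the prime-square contribution survives the average. Concretely, write $L(s,E_d)=L(s-\frac12,\xi_d)$ and apply the Weil explicit formula to $\mathcal{D}(\phi,E_d)$. This gives
\begin{align*}
\mathcal{D}(\phi,E_d)=\widehat{\phi}(0)\frac{\log N_{E_d}}{2\log D}+O\Bigl(\frac{1}{\log D}\Bigr)-\frac{2}{\log D}\sum_{\mathfrak{p},k}\frac{\log \N\mathfrak{p}\,(\xi_d(\mathfrak{p})^k+\overline{\xi_d(\mathfrak{p})}^k)}{\N\mathfrak{p}^{k/2}}\,\widehat{\phi}\Bigl(\frac{\log \N\mathfrak{p}^k}{2\log D}\Bigr),
\end{align*}
where the sum is over primes $\mathfrak{p}$ of $\Z[i]$. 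The conductor term contributes $\widehat\phi(0)$ on average, using Lemma~\ref{Lem Conductor condition} to replace $\log N_{E_d}$ by $2\log D$.

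Next I would split the prime sum by $k$ and by the splitting type of $p$. For $k\ge 3$ the sum converges absolutely, so it contributes $O(1/\log D)$. For inert $p\equiv 3\bmod 4$, $\N\mathfrak{p}=p^2$, so these terms behave like $k\ge2$ and are also negligible, up to a possible constant-size term from $k=1$ that is still $O(1/\log D)$.

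The term $k=2$ at split primes $p=\pi\overline\pi$ is where the $\frac12\int\widehat\phi$ comes from. Here $\xi_d(\pi)^2$ is essentially $\chi_\pi(d)^2\,(\overline\pi/|\pi|)^2$: a quadratic character in $d$ times a Hecke angle. Averaging over $d$ kills the character unless $\chi_\pi^2$ is trivial on the family, which does not happen. I therefore expect the surviving term to come from $\xi_d(\pi)\overline{\xi_d(\overline\pi)}$-type cancellations. The cleaner way is to write $a_p(E_d)^2=p(2+2\cos 2\theta)$ and evaluate the $k=1,2$ contribution through the integer coefficients. Done this way, $a_p^2-p$ averages to $p$ plus oscillating terms, and the mean term gives $-\frac{1}{\log D}\sum_p \frac{\log p}{p}\widehat\phi\bigl(\frac{\log p}{\log D}\bigr)=-\frac12\phi(0)+O(1/\log D)$. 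Together with $\widehat\phi(0)$ this is $\widehat\phi(0)+\frac12\int\widehat\phi$, which is the symplectic/orthogonal-mix answer, since $\phi(0)=\int\widehat\phi$. I would check the sign via the identity $\frac12\int\widehat\phi=\widehat\phi(0)-\frac12\phi(0)$ against the orthogonal density $\widehat\phi(0)+\frac12\phi(0)$, and track constants carefully.

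The main obstacle is the $k=1$ term, $\frac{1}{\log D}\sum_{\N\pi\le D^{2\nu}}\frac{\log p}{\sqrt p}\,\widehat\phi(\cdot)\sum_d w(d/D)\,\xi_d(\pi)$. Here $\xi_d(\pi)=\bigl(\frac{d}{\pi}\bigr)_4$ up to a unit twist and $\overline\pi/|\pi|$. I would use quartic reciprocity to flip to $\bigl(\frac{\pi}{d}\bigr)_4$ with $d\in\Z$. Then I would apply Poisson summation in $d$ modulo $\N\pi=p$: since $d$ is a rational integer, $\bigl(\frac{\cdot}{\pi}\bigr)_4$ restricted to $\Z$ is a quartic Dirichlet character mod $p$. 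This yields dual sums of length $p/D$ weighted by quartic Gauss sums $g(\pi)$ of size $\sqrt p$, with the fourth-power-free condition handled by Möbius inversion. Bounding the dual sum trivially gives acceptable error only for $p\le D^{6/5}$, that is, $\nu<\frac35$ after balancing against the Möbius-cutoff error. Beyond that, the surviving sum is essentially $\sum_\pi \frac{\log p}{\sqrt p}\frac{g(\pi)}{\sqrt p}(\overline\pi/|\pi|)\ldots$ over primes, and Conjecture~\ref{conj-DDHL} provides cancellation in Gauss sums at primes. This should push the range to $p\le D^{2}$, i.e.\ $\nu<1$. The GRH input controls the off-diagonal pieces, and the delicate point will be the exact exponent bookkeeping that produces $\frac35$.
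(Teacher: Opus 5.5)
Your proposal has two genuine gaps. First, the main term is attributed to the wrong primes and comes out with the wrong sign. You call the inert primes negligible. But for $p\equiv 3\bmod 4$ with $p\nmid d$ one has $\xi_d((p))=-1$, so the term $\mathfrak p=(p)$, $k=1$ does not oscillate in $d$. It has norm $p^2$, so it is the size of a ``$k=2$'' term, and by the prime number theorem in progressions it contributes
\[
\frac{2}{\log D}\sum_{p\equiv 3 \bmod 4}\frac{\log p}{p}\,\widehat\phi\Bigl(\frac{\log p}{\log D}\Bigr)=\int_0^\infty\widehat\phi(u)\,\d u+O\Bigl(\frac{1}{\log D}\Bigr)=\frac12\int_\R\widehat\phi(u)\,\d u+O\Bigl(\frac{1}{\log D}\Bigr).
\]
This is $U_{\mathrm{inert}}$ in the paper, and it is exactly the source of $\frac12\int_\R\widehat\phi$. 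The split-prime $k=2$ terms carry $\quartic{d}{\pi}^2=\bigl(\frac{d}{p}\bigr)$, as you note yourself, so they average to $o(1)$ and nothing ``survives'' there. In rational terms, the explicit-formula coefficient at $p^2$ is $\alpha_p^2+\beta_p^2=a_p^2-2p$. Its $d$-average is $0$ for split $p$ and $-2p$ for inert $p$, which gives $+\frac12\phi(0)$. Your mean term $-\frac12\phi(0)$ instead produces the symplectic answer $\widehat\phi(0)-\frac12\phi(0)$. The identity you use to reconcile the two, $\frac12\int\widehat\phi=\widehat\phi(0)-\frac12\phi(0)$, is false: since $\int_\R\widehat\phi=\phi(0)$, the theorem's main term is the orthogonal density $\widehat\phi(0)+\frac12\phi(0)$.

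Second, and more seriously, nothing in your sketch reaches $\nu<\frac35$. After Poisson summation the dual $m$-sum has $O(p/D)$ terms, each a Gauss sum of size $\sqrt p$, multiplied by $D/p$. Bounding it trivially gives $O(\sqrt p)$ for the $d$-sum, which is no better than P\'olya--Vinogradov. Summing $\frac{\log p}{\sqrt p}\cdot\sqrt p$ over $p\le D^{2\nu}$ then gives $D^{2\nu}$, admissible only for $\nu<\frac12$ (this is Remark~\ref{Remark TO Prove 2}). The M\"obius cutoff is harmless: the paper takes $y=D^{\varepsilon}$ and uses GRH for the tail, so no balancing against it produces $\frac35$. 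Going beyond $\frac12$ requires genuine cancellation in the prime sums $H_\beta(X,Y,r)$ of $\Lambda(c)\frac{\overline c}{|c|}g_4(r,c)$, uniformly in $r=8i^{3(\ell+1)}M^3m$. This is the core of the paper. Vaughan's identity splits $H_\beta$ into two kinds of sums:
\begin{itemize}
\item Type~II sums, bounded in Proposition~\ref{TypeII_bound} via $g_4(r,vw)=(-1)^{C(v,w)}\bigl(\frac{w}{v}\bigr)_2g_4(r,v)g_4(r,w)$ and the quadratic large sieve;
\item Type~I sums, reduced by Mellin inversion to the series $\G_\beta(s,r|r,a)$ and controlled by the Lindel\"of-on-average bound of Proposition~\ref{prop-Lindelof-on-average}.
\end{itemize}
The exponent $\frac35$ comes from choosing $U=D^\theta$ so that $D^{2\nu-\frac32}U^{\frac12}+D^{\nu-1}U+D^{2\nu-1}U^{-\frac12}$ is $o(1)$; the binding constraint is the Type~II contribution.

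Your conditional step (Conjecture~\ref{conj-DDHL} plus partial summation gives $\nu<1$) does match the paper. It still needs the separate treatment of inert primes and of $\pi\mid m$ given in \eqref{inert-primes} and Lemma~\ref{Lem bound non-coprimality}. Also, flipping to $\quartic{\pi}{d}$ by reciprocity works against you: Poisson in $d$ needs $\quartic{d}{\pi}$, which is periodic in $d$ modulo $p$.
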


From the Katz--Sarnak conjectures \cite{Katz-Sarnak, KSbook, SST}, it is expected that the results of Theorem~\ref{main-theorem OLD} hold for any {even Schwartz} test function~$\phi$. This would imply that the average analytic rank is~$\frac12$.
The upper bound on the average rank given in Theorem~\ref{main-theorem} is obtained by choosing an appropriate and admissible test function $\phi$ in Theorem~\ref{main-theorem OLD}.
When the resulting upper bound for the average rank is sufficiently small, we obtain lower bounds for the proportion of elliptic curves in the family with minimal rank.
\begin{corollary} \label{Corollary non vanishing}
	Assuming the hypotheses of Theorem \ref{main-theorem OLD},
	\begin{align*}
		\liminf_{D\rightarrow \infty} \frac{\#\lbrace d \in \cF^-\cap [1,D] : r(E_d) =1 \rbrace }{\#(\cF^- \cap [1,D])}
		\geq \frac5{12}.
	\end{align*}
	Assuming also Conjecture \ref{conj-DDHL}, we have
	\begin{align*}
		&\liminf_{D\rightarrow \infty} \frac{\#\lbrace d \in \cF^-\cap [1,D]: r(E_d) =1 \rbrace }{\#(\cF^- \cap [1,D])} 
		\geq \frac34, \\
		&\liminf_{D\rightarrow \infty} \frac{\#\lbrace d \in \cF^+\cap [1,D]: r(E_d) = 0 \rbrace }{\#(\cF^+ \cap [1,D])} 
		\geq \frac14.
	\end{align*}
\end{corollary}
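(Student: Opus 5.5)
The plan is to combine parity, forced by the root number, with the average rank bounds of Theorem~\ref{main-theorem} applied separately to $\cF^-$ and $\cF^+$, and then to pass from smooth weights to the sharp cutoff $[1,D]$.

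\emph{Step 1: parity bookkeeping.} Since $W(E_d)=(-1)^{r(E_d)}$, every $d\in\cF^-$ has $r(E_d)\in\{1,3,5,\dots\}$ and every $d\in\cF^+$ has $r(E_d)\in\{0,2,4,\dots\}$. Let $p_1$ be the (weighted, or lower asymptotic) proportion of $d\in\cF^-$ with $r(E_d)=1$. Then the average rank over $\cF^-$ is at least
\[
p_1+3(1-p_1)=3-2p_1 .
\]
Comparing this with the bound $\frac{13}{6}$ gives $p_1\ge \frac{5}{12}$, and comparing it with $\frac32$ gives $p_1\ge\frac34$. Similarly, if $p_0$ is the proportion of $d\in\cF^+$ with $r(E_d)=0$, the average over $\cF^+$ is at least $2(1-p_0)$, so the bound $\frac32$ gives $p_0\ge\frac14$.

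To get the weighted bounds I will argue directly from Theorem~\ref{main-theorem OLD} rather than quoting Theorem~\ref{main-theorem}. Take $\phi\ge 0$ with $\phi(0)>0$ and $\widehat\phi$ supported in $(-\nu,\nu)$. By GRH the zero at the central point has multiplicity $r(E_d)$, so positivity gives $\phi(0)\,r(E_d)\le \mathcal D(\phi,E_d)$. The optimal choice is the Iwaniec--Luo--Sarnak function $\widehat\phi=g*g$ with $g$ a suitably normalized indicator of $(-\nu/2,\nu/2)$. This yields the average rank bound $\frac1\nu+\frac12$, which equals $\frac{13}{6}$ at $\nu=\frac35$ and $\frac32$ at $\nu=1$. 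The small loss from needing $\nu<\frac35$ strictly disappears after letting $\nu\uparrow$.

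\emph{Step 2: from smooth weights to the sharp count.} The corollary counts $d\in[1,D]$ without weights, whereas Theorem~\ref{main-theorem OLD} only treats admissible $w$. I would exploit the fact that the main term of Theorem~\ref{main-theorem OLD} is independent of $w$. Concretely, for each admissible $w$,
\[
\sum_{d\in\cF^*}\mathcal D(\phi,E_d)\,w(d/D)\;\sim\;M(\phi)\,\mathcal S_{\cF^*}(w,D),
\]
where all the summands $\mathcal D(\phi,E_d)$ are nonnegative. The idea is to treat both sides as nonnegative measures on $\R$, namely the pushforwards under $d\mapsto d/D$. I would then take admissible $w$ of the form $w=|h|^2$ with $\widehat h$ compactly supported, choosing them as Beurling--Selberg type approximations to the indicator of $[\varepsilon,1]$. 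Since these functions are not compactly supported, I would only need them to approximate the indicator in $L^1$ while keeping control of the tails, so that the two sides have the same limiting density against $\mathbf 1_{[\varepsilon,1]}$. Letting $\varepsilon\to0$ costs $O(\varepsilon)$ in proportion.

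Two further points need to be checked in this step. First, the sharp-interval version of the family counts must hold, and it does: $\cF^\pm$ each have density $\frac12$ in $\cF$ by Lemma~\ref{F-with-root-number}. Second, the normalization $\log D$ must be insensitive to $d\in[\varepsilon D,D]$; this follows from Lemma~\ref{Lem Conductor condition}.

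\emph{Step 3: combine.} Applying Step 1 to the sharp averages obtained in Step 2 gives exactly the three stated liminf bounds.

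The main obstacle I expect is Step 2. Admissible weights have $\widehat w$ compactly supported and so can never equal an interval indicator, and $\mathcal D(\phi,E_d)$ is not a priori bounded per $d$. I would first try the measure-theoretic argument above, using nonnegativity to sandwich. If that fails, I would fall back on noting that the proof of Theorem~\ref{main-theorem OLD} only uses smoothness of $w$ in mild ways, and rerun it with a smooth compactly supported majorant and minorant of $\mathbf 1_{[\varepsilon,1]}$.
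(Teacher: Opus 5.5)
Your plan is correct and is essentially the paper's argument. The paper takes a nonnegative admissible $w_\varepsilon\geq \mathbf{1}_{[1/2,1]}$ with $\widehat{w}_\varepsilon(0)=\tfrac12+\varepsilon$, bounds $\#\{d\in\cF^-\cap[D/2,D] : r(E_d)\geq 3\}$ by $\sum_{d\in\cF^-}\tfrac{r(E_d)-1}{2}\,w_\varepsilon(d/D)$ using parity and positivity, applies Theorem~\ref{main-theorem}, and sums dyadically. This is the same majorant-plus-positivity mechanism as your Step~2, only with parity applied before the smoothing rather than after it, and with dyadic blocks in place of $[\varepsilon D,D]$.

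Only the majorant (upper-bound) half of your Step~2 is needed, since the corollary requires just an upper bound on the sharp average rank. So the ``sandwich'' and minorant concerns, and the fallback of rerunning Theorem~\ref{main-theorem OLD} with compactly supported $w$, can simply be dropped.
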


There is a lengthy history of one-level density computations with applications to average rank and non-vanishing in families of $L$-functions. We concentrate here on some families of $L$-functions attached to elliptic curves. The two-parameter family of all elliptic curves $y^2 = x^3 + a x + b,$ with $a, b \in \Z$, for example, was studied by Young~\cite{Young}, who used one-level density computations to show that, under GRH, the average rank across such curves is bounded from above by $25/14$. This improved upon the upper bound 2.3 obtained by Brumer \cite{Brumer} and of 2 obtained by Heath-Brown ~\cite{HB-AR} (both under GRH). Averages over special {two}-parameters families were also considered by Young~\cite{Young} and Cho and Jeong~\cite{CJ-2023}, and over one-parameter families by Miller~\cite{Miller2004}. Phillips \cite{Phillips-2025} considered {two}-parameters families over number fields of degree $k$, and showed that under GRH, the average rank is bounded by $(9k+1)/2$.

\subsection{Sketch of proof of Theorem \ref{main-theorem OLD}}

Since $E_d$ is a CM curve, we may write $L(s, E_d) = L(s- \frac12, \xi_d)$, where $\xi_d$ is a quartic Hecke character over $\Z[i]$ of conductor $\mathfrak{f}_d$ (see Section \ref{section-EF}).
Applying the explicit formula for $L(s, \xi_d)$, we write
\begin{align} \label{also3.9}
	\mathcal{D}(\phi, E_d) = U_{\infty}(\phi, \xi_d) + U_{\text{inert}}(\phi, \xi_d) + U_{\text{split}}(\phi, \xi_d),
\end{align}
where $U_{\infty}(\phi, \xi_d), U_{\text{inert}}(\phi, \xi_d), U_{\text{split}}(\phi, \xi_d)$ are defined in \eqref{define-U}.
Averaging the first two terms yields
\begin{align*}
	\frac{1}{\mathcal{S}_{\cF^*}(w, D)} \sum_{d \in \mathcal{F^*}} \left( U_{\infty}(\phi, \xi_d) + U_{\text{inert}}(\phi, \xi_d) \right) w(\tfrac{d}{D}) = \widehat{\phi}(0) + \frac{1}{2} \int_\R \widehat{\phi}(u) \; \d u+ O \left( \frac{1}{\log{D}} \right).
 \end{align*}
In particular, their average is the same over each of the three families $\cF, \cF^+$, and $\cF^-$. Therefore, to prove Theorem \ref{main-theorem OLD}, it suffices to show that the average of
\begin{align} \label{also-rem3}
	U_{\text{split}}(\phi, \xi_d) 
	&= - \frac{1}{\log{D}}  \sum_{\substack{p \equiv 1 \bmod 4\\ p = \pi \overline{\pi} \\ k \geq 1}}  \frac{\log{p}}{p^{\frac{k}{2}}} 
	\big( \xi_{d}^k (({\pi})) + \overline{\xi_d}^k((\pi)) \big) \widehat{\phi} \left( \frac{k \log{p}}{2 \log{D}} \right),
\end{align}
over $d \in \cF^*$, weighted by $w(\tfrac{d}{D})$, is~$O(\tfrac{1}{\log D})$ as~$D \rightarrow \infty$.

Our first step is to approximate the sum over $d \in \cF^*$ with a sum over $d \in \Z$, using GRH (Lemma \ref{Lemma sieve before Poisson}). The desired estimate then follows from the P\'olya--Vinogradov inequality whenever the support of $\widehat{\phi}$ is contained in $(- \frac12, \frac12)$, see Remark~\ref{Remark TO Prove 2}. To enlarge the admissible support, we flip the two summations and apply Poisson summation to the $d$-sum
\begin{align}
	\sum_{d \in \Z} \quartic{d}{\pi^k} w\left(\frac{d}{D}\right).
\end{align}
The resulting summation involves \textit{quartic Gauss sums}, $g_{4}(r,\pi^k)$ (defined in \eqref{Def-Gauss-sum}). Interchanging the order of summation once more, we are led to estimate averages of quartic Gauss sums at prime-power arguments. More precisely, we seek to bound the sums
\begin{align} \label{sum-over-primes}
	H_\beta(X, Y, r) := \sum_{\substack{c \in \Z[i] \\ c \equiv \beta \bmod 4\\ (r,c)=1}}  \frac{g_4(r, c)}{\N(c)^{\frac{3}{2}}} \Lambda(c) \frac{\overline{c}}{|c|} R_{X, Y}(\N(c)),
\end{align} 
where $\beta \in \{ 1 , 1 + \lambda^3 \}, r \in \Z[i]$, and the test function $R_{X, Y}$ is defined by \eqref{DefR_X}. A serious challenge in computing such estimates is that the Gauss sums are not multiplicative and their associated $L$-functions have no Euler product decomposition. 

The following conjecture is a slight generalization of \cite[Conjecture 1.2]{DDHL}, which makes explicit the dependence on the conductor $r$. 
The corresponding conjecture for cubic Gauss sums was originally formulated by Patterson \cite{Patterson78}, and later refined by Patterson and Heath-Brown \cite{HBP}. A smooth version of the original Patterson conjecture for cubic Gauss sums (namely, the asymptotic for $\ell=0$ with the correct leading constant, but without an explicit error term) was recently proved by Dunn and Radziwi\l\l~\cite{DR}, assuming GRH. 
In \cite{DDH}, some of the results of \cite{DR} about lower bounds for the cubic large sieve are obtained unconditionally.

\begin{conjecture}[Conjecture 1.2 from \cite{DDHL}] \label{conj-DDHL}
	For $\beta \in \{ 1, 1+ \lambda^3 \}$ and $r\in \mathbb Z[i]$, there exist constants $b_{r,\beta}$ such that for any $\varepsilon >0$ and $\ell \in \Z$, we have
	\begin{align*}
		\sum_{\substack{c \in \Z[i] \\ c \equiv \beta \bmod 4\\ \N(c) \leq X}} \frac{g_4(r, c)}{\N(c)^{\frac{1}{2}}} \Lambda(c) \left( 				\frac{\overline{c}}{|c|} \right)^\ell =
		\begin{cases}  
			b_{r,\beta} X^{\frac{3}{4}} + 
			O \left( \N(r)^\varepsilon X^{\frac{1}{2}+\varepsilon} \right) & \ell 
			= 0    \\ O \left(\N(r)^\varepsilon X^{\frac{1}{2}+\varepsilon} \right) & \ell \neq 0 
		\end{cases}
	\end{align*}
as $X \rightarrow \infty$.
\end{conjecture}

The role of Conjecture~\ref{conj-DDHL} in our setting is analogous to that of the Riemann Hypothesis in Heath-Brown's work~\cite{HB-AR} concerning quadratic twists. For quadratic twists, the sums arising after Poisson summation involve {\it quadratic} Gauss sums, which are highly regular and in fact essentially constant (when properly normalized). The resulting sums are essentially equivalent to those which appear before Poisson summation (albeit in a different range of summation). They can then be bounded using the Riemann Hypothesis for the $L$-functions attached to the quadratic twists of the elliptic curve.

Most of the work of this paper consists of bounding the sums $H_\beta(X, Y, r)$ without using Conjecture \ref{conj-DDHL}.
We first use Vaughan's identity (in the form given in~\cite{HB2000}) to decompose the sum~\eqref{sum-over-primes} into several sums over integers in different ranges. Our goal is then to evaluate sums of the form
\begin{align*} 
\Sigma_{j,\beta}(X,Y,r,U):=\sum_{\substack{c \in \Z[i] \\ c \equiv \beta \bmod 4\\ (r,c) =1}} \frac{\overline{c}}{|c|} \; \frac{g_4(r, c)}{\N(c)^{\frac{3}{2}}} \; a_{j}(c,U)  \; R_{X,Y} (\N(c))
\end{align*}
where the $a_{j}(c,U)$ coefficients are sums over divisors of $c$ in certain ranges determined by the parameter~$U$. Depending on these ranges, the resulting sums fall into two classes, which are referred to as Type~I and Type~II sums. To bound Type~I sums, we prove a {\it Lindel\"of-on-average} result across a family of metaplectic $L$-functions (see Proposition \ref{prop-Lindelof-on-average}). As the average runs over squares, the desired result is obtained by applying the \textit{quadratic} large sieve. In \cite{DDH}, some related Lindel\"of-on-average results are proven by a different technique, which involves a Rankin-Selberg method. To bound Type~II sums, we factor the quartic Gauss sums and exploit oscillations of the resulting quadratic characters by again using the quadratic large sieve. The limitation on the support of $\widehat{\phi}$ in Theorem \ref{main-theorem OLD} then comes from the Type~II sums (see Remark \ref{Type_II_contr}). 

This approach for bounding sums of Gauss sums was first used by Heath-Brown and Patterson \cite{HBP} to prove that cubic Gauss sums at prime arguments are equidistributed, and in \cite{P-general} to prove the equidistribution of general Gauss sums at prime arguments. It was also used to compute the one-level density of cubic and quartic families of Dirichlet $L$-functions, with application to non-vanishing: in \cite{DG} (resp. \cite{GZ-Canadian-2020}), the authors prove that under GRH, there is a positive density of cubic (resp. quartic) Dirichlet characters $\xi$ over $\Z[i]$ such that $L(\frac12, \xi) \neq 0$; in \cite{DFL-general-r}, the authors generalize this result to characters $\xi_r$ of any order $r \geq 3$ over $\F_q[t]$.

In our application, we apply Vaughan's identity directly to the smooth sums $H_\beta(X, Y, r)$ rather than the sharp summation appearing in Conjecture \ref{conj-DDHL} and in the previous literature.

\subsection{Structure of the paper}

In Section~\ref{Section-background}, we review the necessary background on quartic characters and quartic Gauss sums.
Next, in Section~\ref{section-EF}, we describe the Hecke character $\xi_d$ such that $L(s, E_d) = L(s - \frac12, \xi_d)$, and use the explicit formula for $L(s, \xi_d)$ to derive the formula \eqref{also3.9} for the one-level density.
At the end of Section~\ref{section-EF}, the proof of Theorem~\ref{main-theorem OLD} is reduced to bounding the prime sum \eqref{also-rem3}.
Section~\ref{Section Poisson} removes the fourth-power-free condition by a sieving argument that reduces sums over the family $d \in \cF_{a, b}$ defined in \eqref{definition-Fab} to sums over all integers $d \in \Z$. We can then apply Poisson summation to the $d$-sum in Lemma~\ref{Poisson}; this yields an $m$-sum of shifted Gauss sums $g_4(m, \pi)$. 
Upon flipping the sums, the problem boils down to bounding the double sum of Remark~\ref{Proposition_reduction}. Finally, we show that, assuming Conjecture~\ref{conj-DDHL}, the required bound holds for $\nu < 1$, thereby proving the second part of Theorem~\ref{main-theorem OLD}.
Section~\ref{section-main} states the key bound for $H_\beta(X, Y, r)$ (see Proposition~\ref{BoundForHAfterLindelof}), and shows how it implies Theorem~\ref{main-theorem OLD} and Corollary~\ref{Corollary non vanishing} without assuming Conjecture~\ref{conj-DDHL}.

The rest of the paper is devoted to the proof of Proposition~\ref{BoundForHAfterLindelof}.
We use Vaughan's identity in Section~\ref{Vaughan} to decompose the double sum into Type~I and Type~II sums involving the test function $R_{X, Y}(t)$ defined in~\eqref{DefR_X}.
The Type~II sums are bounded in Proposition~\ref{TypeII_bound}, while the Type~I sums are bounded in terms of an integral involving Dirichlet series of Gauss sums in Lemmas~\ref{lemma_S_1} and~\ref{Vaughan_bounds}. The main focus of the subsequent sections is to study the analytic properties of these Dirichlet series, in particular to bound them in terms of the conductor in the critical strip. The known convexity bound is not enough to make the Type~I sums as small as the Type~II sums, but the {\it Lindel\"of-on-average} bound of Proposition~\ref{prop-Lindelof-on-average} is. Assuming Proposition~\ref{prop-Lindelof-on-average}, we prove Proposition~\ref{BoundForHAfterLindelof} at the end of Section~\ref{after-Vaughan}. Section~\ref{section-LOA} contains the proof of Proposition~\ref{prop-Lindelof-on-average}, and Section~\ref{root-number} contains the proof of Lemma~\ref{lemma-sign-s1}, which gives the sign of the functional equation for the $L$-functions $L(s, \xi_d)$.

\subsection*{Acknowledgments} 
The authors wish to thank Alex Dunn and Alex de Faveri for useful comments on previous versions of this manuscript. 
This project was completed when LD was visiting the CRM in Montréal thanks to a délégation CNRS and during the special semester ``Universal Statistics in Number Theory'' and she thanks them for their hospitality. 
CD is supported by the NSERC (RGPIN-2019-05536) and the FRQNT (Projets de recherche en équipe 300951).
AF is a member of the INdAM group GNAMPA, and part of this work was completed while he was supported by the FRQNT (Projet de recherche en \'equipe 300951). EW was supported by the Israel Science Foundation (Grant No. 1881/20), and by a Chateaubriand Fellowship through the French Embassy in Israel, which supported an extended research visit to the LMPA, Université du Littoral Côte d’Opale, hosted by LD.

\section{Quartic characters and quartic Gauss sums} \label{Section-background}

\subsection{Quartic Residue Symbol} 

Let $\Z[i]$ be the ring of Gaussian integers whose discriminant is~$-4$. All ideals of $\Z[i]$ are principal, and its unit group is $\{\pm1,\pm i\}$. Throughout the paper, we denote by $\lambda :=1+i$ a chosen generator of the unique ramified prime ideal of $\Q(i)$ lying above 2. Any Gaussian integer $n$ may be uniquely written as $n = u \lambda^k m$, where $u$ is a unit, $k \geq 0$ and $m \equiv 1 \bmod \lambda^3$; equivalently, $m$ is {\it primary}. Every primary Gaussian integer $m$ 
satisfies either $m \equiv 1 \bmod 4$ or $m \equiv 1 + \lambda^3 \bmod 4$.

For any Gaussian prime $\pi \in \Z[i]$ such that $(\pi,\lambda)=1$, we define the \textit{quartic residue symbol} modulo $\pi$ as the quartic character $\chi_{(\pi)}: \left( \Z[i]/(\pi) \right)^\times \longrightarrow \{ \pm 1, \pm i \}$ such that
\begin{align}\label{quartic_symbol}
	\chi_{(\pi)}(\alpha) := \left( \frac{\alpha}{\pi} \right)_4 \equiv \alpha^{(\N(\pi)-1)/4} \bmod \pi,
\end{align}
and extended to $\Z[i]$ by $\chi_{(\pi)}(\alpha)  =0$ when $(\alpha,\pi) \neq 1$. We note, in particular, that $\left( \frac{\alpha}{\pi} \right)_4 =1$ if and only if $\alpha$ is a fourth power in $\Z[i]/(\pi)$. Likewise, $\left( \frac{\alpha}{\pi} \right)_4 =-1$ if and only if $\alpha$ is a square but not a fourt -power in $\Z[i]/(\pi)$. Moreover, $\chi_{(\pi)}(\alpha)$ is independent of the chosen generator of $(\pi)$.

Consider a non-unit $\beta \in \Z[i]$, $(\beta, \lambda)=1$, with prime factorization given by $\beta=  u \pi_1^{e_1} \dots \pi_s^{e_s}$. We define the quartic residue symbol $\chi_{(\beta)}$ by multiplicativity, for any $\alpha \in \Z[i]$,
$$ \chi_{(\beta)}(\alpha) := \left( \frac{\alpha}{\beta} \right)_4 = \prod_{i=1}^s \left( \frac{\alpha}{\pi_i} \right)_4^{e_i}.$$
If $\beta \in \Z[i]$  is fourth-power-free, then $\chi_{(\beta)}$ is a primitive character of conductor $(\rad(\beta))$, the product of primes dividing $\beta$ without multiplicity.

For  $\alpha,\gamma \in \mathbb{Z}[i]$ non-units with $\alpha, \gamma$ primary and $(\alpha,\gamma)=1$, 
the quartic reciprocity law  \cite[Theorem~6.9]{Lemmermeyer} states that
\begin{equation} \label{bilaw}
	\Big( \frac{\alpha}{\gamma} \Big)_4=(-1)^{C(\alpha,\gamma)} \Big( \frac{\gamma}{\alpha} \Big)_4,
\end{equation}
where 
\begin{equation} \label{Cdef}
	C(\alpha,\gamma)=\frac{\N(\alpha)-1}{4} \frac{\N(\gamma)-1}{4}.
\end{equation}
Observe that 
$(-1)^{C(\alpha,\gamma)}$ depends only on $\alpha,  \gamma \bmod{4}$.
There are also supplementary laws for the ramified prime and units. If  $\alpha = a + 2b i$ is primary
we have
\begin{equation}\label{Eq suppl quartic law}
	\quartic{i}{\alpha} 
	= i^{\frac{1-a}{2}}, \quad \quartic{\lambda}{\alpha} = i^{\frac{a - 2b -4b^2 -1}{4}}, \quad \text{ and } \quartic{2}{\alpha} 
	= i^{-b}.
\end{equation}	
Finally, by \cite[Prop. 9.8.5]{IreRos}, for odd $d \in \mathbb{N}$ and $n \in \Z_{\neq 0}$ such that $(n,d) = 1$,
\begin{equation}\label{chi_at_integers}
	\chi_{(d)}(n) =1.
\end{equation}

\subsection{Quartic Gauss sums}

For $c,r \in \Z[i]$ with $c \equiv 1 \bmod \lambda^3$, using the standard notation $e(x) := e^{2 \pi i x}$, we define the quartic Gauss sum as 
\begin{align}\label{Def-Gauss-sum}
	g_4(r,c) := \sum_{\alpha \bmod c} \quartic{\alpha}{c} e\big(\Tr\big(\tfrac{\alpha r}{c}\big)\big).
\end{align}
If $(r,c)=1$, the change of variable $\beta = \alpha r$ yields
\begin{equation} \label{rel1}
	g_4(r v,c)=\oquartic{r}{c} g_4(v,c).
\end{equation}
As shown in \cite[(3.12)]{DDHL}, we moreover have that
\begin{equation}\label{gauss_sum_squarefree}
	|g_{4}(1,c)|=\mu^{2}(c)\N(c)^{\frac{1}{2}}.
\end{equation}
In particular, by setting $v = 1$ in \eqref{rel1} it follows that for $(r,c) =1$,
\begin{equation}\label{gauss_sum_bound}
	|g_{4}(r,c)| \leq \mu^2(c) \N(c)^{\frac{1}{2}}.
\end{equation}
For $v \in \Z[i]$, $c,c^{\prime} \in \Z[i]$ with $(cc^{\prime},\lambda)=1$ and $(c,c^{\prime})=1$, 
the Chinese Remainder Theorem implies the twisted multiplicativity relation,
\begin{align} \label{rel2}
	g_4(v,cc^{\prime})&= \quartic{c}{c^{\prime}} \quartic{c^{\prime}}{c}  g_4(v,c) g_4(v,c^{\prime}) \\
	&= (-1)^{C(c,c')}g_{4}(v,c)g_{4}(c^{2}v,c'), \nonumber
\end{align}
where the last equality follows from 
\eqref{bilaw} and \eqref{rel1}.  

By \eqref{rel1} and \eqref{rel2} it suffices to understand $g_4(\pi^n,\pi^k)$ for primes
$\pi \equiv 1 \bmod{\lambda^3}$ and $k,n \in \mathbb{N}$.
As in \cite[(3.7)]{DDHL} we will use the following:
\begin{equation} \label{rel4}
	g_4(\pi^n,\pi^k)=\begin{cases}
	\N(\pi)^{k-1} g_4(1,\pi) & \text{if } n=k-1, \quad k \equiv 1 \bmod{4},\\
	\N(\pi)^{k-1} g_2(1,\pi) & \text{if } n=k-1, \quad k \equiv 2 \bmod{4}, \\ 
	\N(\pi)^{k-1} \quartic{-1}{\pi} \overline{g_4(1,\pi)} & \text{if } n=k-1, \quad k \equiv 3 \bmod{4}, \\
	-\N(\pi)^{k-1} & \text{if } n=k-1, \quad k \equiv 0 \bmod{4}, \\
	\varphi(\pi^{k}) & \text{if } n \geq k, \quad k \equiv 0 \bmod{4}, \\
	0 & \text{otherwise},
\end{cases}
\end{equation}
where
\begin{equation*}
	g_2(v,c):=\sum_{d \ppmod{c}} \Big( \frac{d}{c} \Big)_2 e \big(\Tr\big(\tfrac{v d}{c}\big) \big),
\end{equation*}
is the (auxiliary) quadratic Gauss sum. In particular, the relations~\eqref{rel1} and~\eqref{rel2} remain valid under weaker conditions whenever both sides of the equations vanish.

\section{Properties of the $L$-function $L(s, \xi_d)$}\label{section-EF}

Our motivation for studying quartic Hecke characters is the following.
Consider the curve $E_d: y^2=x^3-dx$ where $d$ is fourth-power-free. 
As in \cite[Thm.~7 in Ch.~18]{IreRos}, its associated~$L$-function is $L(s, E_d) = L(s-\tfrac12, \xi_d)$ where $\xi_d$ is the\footnote{In fact, there are two such Hecke characters, $\xi_d$ and $\overline{\xi_d}$, whose associated $L$-functions coincide.} Hecke character over $\Z[i]$ defined on prime ideals by
\begin{equation}\label{Def_xi_d}
	\xi_d : \mathfrak{p} \mapsto 
	\begin{cases}
		\oquartic{d}{\pi}	\frac{\pi}{|\pi|} &\text{ if } \mathfrak{p} = (\pi) \text{ with } \pi \equiv 1 \bmod \lambda^3, \\
		0 &\text{ if } \mathfrak{p} = (\lambda),
	\end{cases} 
\end{equation}
and extended by multiplicativity. 
Note that if $\N(\mathfrak{p})=p^2$, and $(p,d) =1$, then $\xi_d(\mathfrak{p}) = -1.$ 
Thus,
\begin{align*}
	L(s,E_d)
	&= L(s-{\textstyle{\frac12}}, \xi_d)
	= \prod_{\mathfrak{p}} \left( 1 -  \frac{\xi_d(\mathfrak{p})}{\N(\mathfrak{p})^{s-\frac12}} \right)^{-1} \\
	&=  \prod_{\substack{p \equiv 1 \bmod 4\\ (p) = \p\overline{\p} }} 
	\left( 1 -  \frac{\sqrt{p} \,\xi_d(\p)}{p^s} \right)^{-1} \left( 1 -  \frac{{\sqrt{p} \, \overline{\xi_d}(\p)}}{p^s} \right)^{-1}
	\prod_{\substack{p \equiv 3 \bmod 4 \\ p\nmid d}} \left( 1 +  \frac{p}{p^{2s}} \right)^{-1} .
\end{align*}
We will study the one-level density of the family of $L$-functions $L(s, \xi_d)$ at $s=\frac12$, which corresponds to $L(s,E_d)$ at $s=1$.

\subsection{Conductor and sign of the functional equation}
 
The parameter~$d$ runs over the set of odd fourth-power-free integers. One can adapt the proof of \cite[Lem.~2.3]{DDW} to fourth-power-free $d$ by counting the contribution of prime factors with multiplicities.
Upon doing so, we have that the conductor of $\xi_d$ is given by
\begin{align} \label{conductor}
	\mathfrak{f}_d := \text{cond}(\xi_d) = 
	\begin{cases}
		(\lambda^3\rad(d)), & \text{if } d \equiv 1 \bmod 4 \\
 		(4\rad(d)), & \text{if } d \equiv 3 \bmod 4,
 	\end{cases}
\end{align}
where $\rad(d) := \prod_{p\mid d} p$ denotes the radical of $d$. 
The completed~$L$ function is defined by (see e.g. \cite[(2.24)]{DDW})
$$\Lambda(s, \xi_d) := (4 \N(\mathfrak{f}_d))^{\frac{s}{2}} (2 \pi)^{-s} \Gamma\big(s+\tfrac12 \big) L(s, \xi_d). $$ 
It satisfies the functional equation
\begin{align*}
	\Lambda(s, \xi_d) &=  W(\xi_d)  \Lambda(1-s, {\xi_d}) 
\end{align*}
where $W(\xi_d)$ denotes the root number of $L(s, \xi_d)$. 
 
The value of the root number depends on the congruence classes of both $d$ and $\rad(d)$. The following generalizes the result of Birch and Stephens~\cite{BirchStephens} to $d\equiv 3 \bmod 4$ and \cite[Lem.~2.5]{DDW} to fourth-power-free integers.	 

\begin{lemma} \label{lemma-sign-s1} 
	Let $d$ be an odd fourth-power-free integer, then the root number of $\xi_d$ is
	\begin{align} \label{d=1mod4-s1}
		W(\xi_d) = (-1)^{\frac{\rad(d)-1}{2}} 
		\begin{cases} 
			1 & \textnormal{ if } d \equiv 1,7,13,15 \bmod {16}\\
			-1 & \textnormal{ if } d \equiv 3,5,9,11 \bmod {16}.
		\end{cases}
\end{align}
\end{lemma}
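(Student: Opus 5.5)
We plan to compute $W(\xi_d)$ through the standard local decomposition of the root number of a Hecke character, $W(\xi_d)=\prod_v W_v(\xi_d)$, with $v$ running over the places of $\Q(i)$. Equivalently, we can write $W(\xi_d)$ as a normalized Gauss sum attached to the finite part of $\xi_d$ modulo $\mathfrak f_d$, twisted by the infinity type $\alpha\mapsto \alpha/|\alpha|$. The conductor \eqref{conductor} factors as $\mathfrak f_d=\mathfrak f_2\cdot(\rad(d))$ with $\mathfrak f_2\in\{(\lambda^3),(4)\}$. We factor the finite character $\chi$ of $\xi_d$ (the one with $\xi_d((\alpha))=\chi(\alpha)\,\alpha/|\alpha|$) accordingly, as a product of a character modulo $\mathfrak f_2$ and the characters $\oquartic{d}{\cdot}$ restricted to the odd part. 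Here quartic reciprocity \eqref{bilaw} together with \eqref{Eq suppl quartic law} lets us rewrite $\oquartic{d}{\pi}$ as $\oquartic{\pi}{d}$ times a character of $\pi$ modulo $4$ or $\lambda^3$. By twisted multiplicativity of Gauss sums (the CRT decomposition, analogous to \eqref{rel2}), the root number then splits as
\[
W(\xi_d)=\epsilon_\infty\,\epsilon_2(d)\prod_{p\mid d}\epsilon_p(d),
\]
where each factor is a normalized Gauss sum, and the cross terms $\quartic{c}{c'}$ that appear also have to be included.

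For odd $p\mid d$, write $d=p^{j}d'$ with $1\le j\le 3$. If $p\equiv 3\bmod 4$, then $p$ is inert and the local character is $\chi_{(p)}^{j}$ on $\Z[i]/(p)\cong\F_{p^2}$. This character is trivial on $\F_p^\times$ by \eqref{chi_at_integers}, so its Gauss sum is explicitly computable by Stickelberger/Davenport--Hasse; it equals $\pm p$ and contributes a sign of the form $(-1)^{(p-1)/2}$-type. If $p\equiv1\bmod4$, we have $p=\pi\bar\pi$ and the local characters at $\pi$ and $\bar\pi$ are conjugate, $\chi_{(\pi)}^j$ and $\overline{\chi_{(\pi)}^j}$. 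Their Gauss sums multiply to $\chi(-1)\,p$, so that factor reduces to $\quartic{-1}{\pi}^j=(-1)^{j(p-1)/4}$ up to the cross terms. Combining over all $p\mid d$ should produce the factor $(-1)^{(\rad(d)-1)/2}$, together with a contribution depending on $d\bmod 16$ through the multiplicities $j$. This explains why both $\rad(d)$ and $d$ enter the formula.

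The $2$-adic factor $\epsilon_2(d)$ is a Gauss sum modulo $\lambda^3$ or $4$ for a character determined by the class of $d\bmod 16$; note that the reciprocity corrections and $\quartic{\cdot}{\alpha}$ for units and $\lambda$ depend on $d$ only modulo $16$. There are finitely many cases, and we would evaluate them directly, summing over $(\Z[i]/4)^\times$ (of order $8$) with the infinity-type twist. As a check, we would use the known value $W(E_1)=+1$ for $d=1$ (the curve $y^2=x^3-x$ has rank $0$). We would also compare with Birch--Stephens for $d\equiv1\bmod4$ squarefree and with \cite[Lem.~2.5]{DDW}.

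We expect the main obstacle to be the bookkeeping of the signs. This means tracking the cross terms from twisted multiplicativity, the reciprocity factors $(-1)^{C(\cdot,\cdot)}$, and the exact $2$-adic Gauss sum values, so that all of them consolidate into the clean dependence on $d\bmod16$ and on $\rad(d)\bmod4$. Our first attempt would be to reduce to $d$ of the form $d=\pm\prod p_i^{j_i}$ and to show that the answer is multiplicative in an appropriate sense. We would then verify the formula on generators: $d=-1$, primes $p$, and their squares and cubes for each class of $p\bmod16$. Finally, we would deduce the general case by checking that the right-hand side of \eqref{d=1mod4-s1} transforms in the same way under multiplication.
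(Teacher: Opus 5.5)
Your skeleton matches the paper's: the Gauss-sum formula \eqref{Eq Formula sign}, the factorization $\xi_{d,\mathrm{fin}}=\overline{\chi}_{(d)}\eta_d$ obtained from reciprocity, and a CRT splitting into a $2$-adic Gauss sum, one Gauss sum per rational prime $p\mid d$, and cross terms. The gap is that some of the values you predict for these pieces are wrong, and assembling them as you describe would not give \eqref{d=1mod4-s1}.

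\emph{The inert factor.} For inert $p\equiv 3\bmod 4$, the normalized Gauss sum of $\overline{\chi}_{(p)}^{j}$ on $\F_{p^2}$ is not a $(-1)^{(p-1)/2}$-type sign. Here the additive character is $x\mapsto e(\Tr(x/2p))$; it differs from the canonical one by $x\mapsto x/2$, which is harmless because $\chi_{(p)}$ is trivial on $\F_p^\times$. By Stickelberger the sum equals $(-1)^{j(p+1)/4}$, so it is $+1$ when $j=2$ or $p\equiv 7\bmod 8$.

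\emph{Where the factors actually come from.} Combined with your correct split value $(-1)^{j(p-1)/4}$, the odd Gauss sums multiply to $\left(\frac{2}{|d|}\right)$. This is $-1$ exactly when $d\equiv\pm 3\bmod 8$, which is \eqref{root_number_product}. So in the splitting you describe they do not produce $(-1)^{(\rad(d)-1)/2}$. That factor is the cross term $\eta_d(\rad(d))$: both $\chi_{(\lambda^3)}$ and $\overline{\chi}_{(4)}$ equal $(-1)^{(n-1)/2}$ on odd positive integers $n$, and $\chi_{(2)}(n)=1$. The multiplicities $j$ also carry no information modulo $16$. After \eqref{chi_at_integers} makes each $\overline{\chi}^{e_j}_{(p_j)}(\rad(d)/p_j)$ trivial, the only remaining cross term is $\overline{\chi}_{(d)}(g)$. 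It equals $1$ for $g=4$, and $\quartic{\lambda}{d}=i^{(d-1)/4}$ for $g=\lambda^3$ by \eqref{Eq suppl quartic law}. This term is the sole source of dependence on $d\bmod 16$ rather than $d\bmod 8$.

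\emph{The multiplicativity fallback.} Checking generators and propagating by multiplicativity cannot work, because neither side of \eqref{d=1mod4-s1} is multiplicative in $d$. The $\pm1$ factor $\epsilon(d)$ in \eqref{d=1mod4-s1} is not a character modulo $16$, since $\epsilon(9)=-1\neq\epsilon(3)^2$. Concretely, the formula gives $W(\xi_{-1})=W(\xi_3)=1$ but $W(\xi_{-3})=-1$. Any correct product rule for $W(\xi_{dd'})$ would have to carry $\eta_d(\rad(d))$, $\overline{\chi}_{(d)}(g)$ and the change of $\eta_d$ with $d\bmod 8$, which is the full CRT computation anyway.

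So do that computation directly, as the paper does. Evaluate the $2$-adic sums $W(\eta_d,2)$: they equal $1$, $-i$ and $1$ for $d\equiv1\bmod 8$, $d\equiv 5\bmod 8$ and $d\equiv 3\bmod 4$ respectively. Multiply by $\eta_d(\rad(d))\,\overline{\chi}_{(d)}(g)\left(\frac{2}{|d|}\right)$, then split according to $d\bmod 16$.
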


For ease of exposition, we delay the proof to Section~\ref{root-number}.
This motivates the following notation. Let $a,b$ be positive odd integers, and let 
\begin{align}\label{definition-Fab}
	\mathcal{F}_{a,b} &= \{ d \in \cF \;:\; d \equiv a \bmod 16, \;\rad(d) \equiv b \bmod 4\}.
\end{align}
We define
\begin{align*}
	\mathcal{S}_{\mathcal F_{a,b}}(w, D)  &= \sum_{\substack{d \in \mathcal{F}_{a,b}}} w \left( \frac{d}{D} \right), \\
	\mathscr{D}_{a,b}(\phi, w,D) &= \frac{1}{\mathcal{S}_{\cF_{a,b}}(w, D) } 
	\sum_{d \in \mathcal{F}_{a,b}} \mathcal{D}(\phi, E_d) w\bigg(\frac{d}{D}\bigg). 
\end{align*}
Our goal is to prove that
\begin{align} \label{prove-that}
	\mathscr{D}_{a,b}(\phi, w,D) 
	=\widehat{\phi}(0) + \frac{1}{2} \int_\R \widehat{\phi}(u) \; \d u + O \bigg( \frac{1}{\log{D}} \bigg)
\end{align}
under the conditions of Theorem \ref{main-theorem OLD}.

We begin with an estimate for the size of the families.
\begin{lemma} \label{F-with-root-number} Let $a,b$ be positive odd integers, one has
$$\mathcal{N}_{\cF_{a,b}}(D) := \# \{ 1 \leq d \leq D \;:\; d \in \cF_{a,b} \} \sim \frac{8}{15 \zeta(4)} \frac{D}{16}  \left( 1 + \chi_4(ab) P_3 \right),$$
as $D\to \infty$,
where $\chi_4$ is the non-trivial Dirichlet character modulo~$4$ and $P_3$ is the convergent Euler product
\begin{align*}
 P_3 := \prod_{p \equiv 3 \bmod 4} \Big( 1 - \frac{2}{p^2(1 + \frac{1}{p} + \frac{1}{p^2} + \frac{1}{p^3})}\Big). 
\end{align*}
In particular, it follows that
\begin{align*}
\mathcal{N}_{\cF}(D) \sim \frac{8}{15 \zeta(4)} D \quad \text{and} \quad \mathcal{N}_{\cF^{\pm}}(D) \sim \frac{4}{15 \zeta(4)} D.
\end{align*}
\end{lemma}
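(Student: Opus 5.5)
We need to count odd fourth-power-free $d\le D$ with $d\equiv a \bmod 16$ and $\rad(d)\equiv b\bmod 4$. The plan is to detect both congruence conditions with Dirichlet characters and then evaluate the resulting Dirichlet series by a Tauberian argument.

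\textbf{Step 1: detecting the congruences.} Since $d$ and $\rad(d)$ are odd, we use orthogonality of characters. The condition $d\equiv a \bmod 16$ is detected by $\frac18\sum_{\psi \bmod 16}\overline{\psi}(a)\psi(d)$. The condition $\rad(d)\equiv b\bmod 4$ is detected by $\frac12\big(1+\chi_4(b)\chi_4(\rad(d))\big)$. This gives
\[
\mathcal N_{\cF_{a,b}}(D)=\frac1{16}\sum_{\psi \bmod 16}\overline\psi(a)\sum_{\epsilon\in\{0,1\}}\chi_4(b)^\epsilon\sum_{\substack{d\le D\\ d\in\cF}}\psi(d)\chi_4(\rad(d))^\epsilon .
\]
Each inner sum has the Euler product
\[
F_{\psi,\epsilon}(s)=\prod_{p \text{ odd}}\Big(1+\chi_4(p)^\epsilon\big(\psi(p)p^{-s}+\psi(p)^2p^{-2s}+\psi(p)^3p^{-3s}\big)\Big).
\]

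\textbf{Step 2: locating the poles.} We compare $F_{\psi,\epsilon}(s)$ with $L(s,\psi\chi_4^\epsilon)$. The quotient is holomorphic and bounded on $\re s>\frac12+\delta$. So $F_{\psi,\epsilon}$ has a pole at $s=1$ exactly when $\psi\chi_4^\epsilon$ is principal. This happens only for $(\psi,\epsilon)=(\psi_0,0)$ and $(\psi,\epsilon)=(\chi_4,1)$, where $\chi_4$ is viewed as a character mod $16$. All other pairs give sums that are $o(D)$ by the standard Tauberian or Selberg--Delange argument, using nonvanishing of Dirichlet $L$-functions on $\re s=1$.

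\textbf{Step 3: the principal term.} The pair $(\psi_0,0)$ gives the count of odd fourth-power-free $d$. Its density is
\[
\prod_{p\ \text{odd}}\Big(1-\frac1p\Big)\Big(1+\frac1p+\frac1{p^2}+\frac1{p^3}\Big)\cdot\Big(1-\frac12\Big)=\frac12\cdot\frac{1}{\zeta(4)(1-2^{-4})}=\frac{8}{15\zeta(4)}.
\]

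\textbf{Step 4: the twisted term.} The pair $(\chi_4,1)$ contributes with the factor $\overline{\chi_4}(a)\chi_4(b)=\chi_4(ab)$. Its local factor at an odd prime $p$ is $1+p^{-1}+\chi_4(p)p^{-2}+p^{-3}$. For $p\equiv1\bmod4$ this agrees with the untwisted factor. For $p\equiv 3\bmod 4$ the ratio to the untwisted factor is
\[
1-\frac{2p^{-2}}{1+p^{-1}+p^{-2}+p^{-3}},
\]
which is exactly the $p$-factor of $P_3$. So this term equals $\frac{8}{15\zeta(4)}P_3\,D$, and multiplying by $\frac1{16}$ gives the stated asymptotic.

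\textbf{Step 5: the corollaries.} Summing over the $8\cdot 2$ classes $(a\bmod 16,\ b\bmod 4)$ cancels the $\chi_4(ab)$ terms and gives $\mathcal N_\cF$. By Lemma~\ref{lemma-sign-s1}, $W=\chi_4(b)\,s(a)$, where $s(a)=\pm1$ on the two sets of four residues. Hence
\[
\mathcal N_{\cF^+}-\mathcal N_{\cF^-}\sim \frac{8P_3}{15\zeta(4)}\frac{D}{16}\sum_{a,b}\chi_4(ab)\chi_4(b)s(a)=\frac{8P_3}{15\zeta(4)}\frac{D}{16}\cdot 2\sum_{a}\chi_4(a)s(a).
\]
This uses $\chi_4(b)^2=1$. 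We need $\sum_a\chi_4(a)s(a)=0$. The residues with $\chi_4=1$ are $1,5,9,13$, with $s$-values $+,-,-,+$. The residues with $\chi_4=-1$ are $3,7,11,15$, with $s$-values $-,+,-,+$. Both groups sum to $0$, so the total is $0$ and $\mathcal N_{\cF^\pm}$ are each half of $\mathcal N_\cF$.

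\textbf{Main obstacle.} The main obstacle is the bookkeeping in Steps 3--4. One has to check that the twisted principal term produces exactly the constant $P_3$, and handle the prime $2$ correctly.
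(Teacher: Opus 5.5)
Your proposal is correct and follows the same route as the paper. You detect $d\equiv a \bmod 16$ and $\rad(d)\equiv b\bmod 4$ by orthogonality, observe that only the series for $(\psi_0,0)$ and $(\chi_4,1)$ have a pole at $s=1$, and read off $\frac{8}{15\zeta(4)}\frac{D}{16}(1+\chi_4(ab)P_3)$ from the Euler products. Your explicit check that $\sum_{a}\chi_4(a)s(a)=0$, so that the $P_3$ terms cancel between $\cF^+$ and $\cF^-$, spells out the step the paper only says follows immediately from Lemma~\ref{lemma-sign-s1}.
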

\begin{proof}
For $0 \leq i \leq 7$, let $\chi_i$ be the Dirichlet characters modulo $16$, where $\chi_0$ is the principal character, and  $\chi_4$ is the non-trivial character modulo 4. 
Using orthogonality relations and applying the Wiener--Ikehara Tauberian theorem to pick up the residue at $s=1$, we write
\begin{align*}
\mathcal{N}_{\cF_{a,b}}(D)  &= \sum_{\substack{1 \leq d \leq D\\\text{$d$ odd}\\ \text{fourth-power-free}}}\frac{1}{2}  \big(1 + \chi_4(b^{-1} \rad(d))\big) \Big(\frac{1}{8} \sum_{i=0}^{7} \chi_i(a^{-1} d) \Big)\\
&\sim \frac{D}{16}  \mathrm{Res}_{s=1} \bigg( \sum_{\substack{\text{$d$ odd}\\ \text{fourth-power-free}}} \big(1 + \chi_4(b^{-1} \rad(d))\big) \Big( \sum_{i=0}^{7} \chi_i(a^{-1} d) \Big) d^{-s} \bigg).
\end{align*}
Writing the generating series above as a linear combination of $16$ Dirichlet series, observe that only 2 of those series have a pole at $s =1$ and thus contribute to the count, namely
\begin{equation*}
L_0(s):=\sum_{\substack{\text{$d$ odd}\\ \text{fourth-power-free}}} \chi_0(d) d^{-s} = \prod_{p\neq 2} \left( 1 + \frac{1}{p^s} + \frac{1}{p^{2s}} + \frac{1}{p^{3s}} \right)=\frac{\zeta(s)}{\zeta(4s)}\left( 1 + \frac{1}{2^s} + \frac{1}{2^{2s}} + \frac{1}{2^{3s}} \right)^{-1}
\end{equation*}

\begin{align*}
\text{and} \quad L_4(s) &:= \sum_{\substack{\text{$d$ odd}\\ \text{fourth-power-free}}} \chi_4(\rad(d)) \chi_4(d) d^{-s}  = \prod_{p\neq 2} \left( 1 + \frac{\chi_4(p^2)}{p^s} + \frac{\chi_4(p^3)}{p^{2s}} + \frac{\chi_4(p^4)}{p^{3s}} \right).\\
\end{align*}
We deduce that
\begin{align*}
\mathcal{N}_{\cF_{a,b}}(D)  &\sim \frac{D}{16} \big(  \mathrm{Res}_{s=1} L_0(s) +   \chi_4(ab)  \mathrm{Res}_{s=1} L_4(s) \big) \\
&=  \frac{8}{15 \zeta(4)} \frac{D}{16}  \left( 1 + \chi_4(ab) \prod_{p \equiv 3 \bmod 4} \frac{ 1 + \frac{1}{p} - \frac{1}{p^2} + \frac{1}{p^3} }{ 1 + \frac{1}{p} + \frac{1}{p^2} + \frac{1}{p^3}} \right).
\end{align*}
The count for $\mathcal{N}_{\cF^\pm}(D)$ follow immediately from Lemma~\ref{lemma-sign-s1}.
\end{proof}

Let us then show that on average the logarithm of the conductor is $\log D^2$, which accounts for the normalization in \eqref{13Jan.1}. This result is referred to as the \lq\lq conductor condition\rq\rq\, in the work of Young \cite{Young}.

\begin{lemma}[Conductor condition]\label{Lem Conductor condition}
	Let $w$ be a non-zero Schwartz function with $w(0)=0$.
	For positive odd integers $a,b$ we have as $D$ tends to infinity, 
	\begin{equation}\label{ConductorCondition} 
		\frac{1}{\mathcal{S}_{\mathcal F_{a,b}} (w, D)} 
		\sum_{d\in \cF_{a,b}} w\left(\frac{d}{D}\right)\frac{\log \N(\mathfrak f_d)}{\log D^2} 
		= 1+O\bigg(\frac{1}{\log D}\bigg). 
	\end{equation}
\end{lemma}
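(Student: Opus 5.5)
Since $\N(\mathfrak f_d) = \N(\lambda^3)\N(\rad(d)) = 8\,\rad(d)^2$ or $\N(4)\N(\rad(d))=16\,\rad(d)^2$ by \eqref{conductor}, we have
\[
\log \N(\mathfrak f_d) = 2\log|d| - 2\log\frac{|d|}{\rad(d)} + O(1).
\]
The plan is therefore to show that, on average over $d\in\cF_{a,b}$ weighted by $w(d/D)$, two things hold: (i) $\log|d| = \log D + O(1)$, and (ii) $\log(|d|/\rad(d)) = O(1)$. Dividing by $\log D^2$ then gives $1+O(1/\log D)$.

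For (i), write $\log|d| = \log D + \log|d/D|$, so that the error term is
\[
\sum_{d\in\cF_{a,b}} w(d/D)\,\log|d/D|.
\]
Because $w$ is Schwartz and $w(0)=0$, the function $t\mapsto w(t)\log|t|$ is integrable. In fact $w(t)\log|t|$ is bounded near $0$, since $w(t)=O(|t|)$ there, and it decays rapidly at infinity. Hence this sum is bounded by
\[
\sum_{d\neq 0} |w(d/D)\log|d/D|| \ll D,
\]
by comparing it with the integral $D\int |w(t)\log|t||\,\d t$. For the Riemann-sum comparison near $t=0$ we use $|w(t)\log|t|| \ll |t|\,|\log|t||$, which is monotone on small intervals; alternatively we bound the terms with $|d|\le D^{1/2}$ trivially by $D^{1/2}\log D$. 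By \eqref{Eq total sum} and Lemma~\ref{F-with-root-number}, $\mathcal S_{\cF_{a,b}}(w,D)\asymp D$, using positivity of $1\pm P_3$ (as $0<P_3<1$). So after normalization this error is $O(1)$.

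For (ii), we drop the congruence conditions, since all terms are nonnegative, and bound
\[
\sum_{d\ \text{odd, 4th-power-free}} w(d/D)\log\frac{|d|}{\rad(d)}.
\]
Write $\log(|d|/\rad(d)) = \sum_{p^2\mid d}\log p \cdot(v_p(d)-1)$, which is at most $2\sum_{p^2\mid d}\log p$. Swapping sums, the sum is
\[
\ll \sum_p \log p \sum_{p^2\mid d} |w(d/D)|.
\]
The inner sum is $\ll D/p^2 + \sup|w|$ when $p^2\le D$. For $p^2>D$ we use the rapid decay of $w$: terms with $|d|\ge p^2$ contribute $\ll \sum_{m\ge1}|w(mp^2/D)| \ll_A (D/p^2)^{A}$. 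Summing over $p$, the main part is $D\sum_p \log p/p^2 = O(D)$. The remaining contributions are $\sum_{p\le \sqrt D}\log p \ll \sqrt D$ and a convergent tail $\sum_{p>\sqrt D}\log p\,(D/p^2)^{2}\ll \sqrt D$. Hence the sum is $O(D)$, and after normalization by $\mathcal S_{\cF_{a,b}}(w,D)\asymp D$ it is $O(1)$. The constant $O(1)$ from $\log 8$ or $\log 16$ is trivially fine.

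Combining (i) and (ii), the left side of \eqref{ConductorCondition} equals
\[
\frac{2\log D + O(1)}{2\log D} = 1+O\Big(\frac{1}{\log D}\Big).
\]
The only mildly delicate step is the lower bound $\mathcal S_{\cF_{a,b}}(w,D)\gg D$. It follows from the partial-summation expression \eqref{Eq total sum} applied with $\mathcal N_{\cF_{a,b}}$ in place of $\mathcal N_\cF$, together with $\widehat w(0)>0$, which holds because $w\ge0$ and $w\not\equiv0$.
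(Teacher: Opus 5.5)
Your proposal is correct and follows the paper's proof. Both write $\log\N(\mathfrak f_d)$ as $\log D^2+\log(d^2/D^2)-\log(d^2/\rad(d)^2)+O(1)$, bound the middle term by a Riemann-sum estimate, and bound the last term by swapping to a sum over $p^2\mid d$ and using $\sum_p \log p/p^2<\infty$.

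Your version is more careful at three points the paper leaves implicit:
\begin{itemize}
\item $w(t)\log|t|$ is not actually Schwartz, which you handle directly.
\item You treat the primes with $p^2>D$ separately.
\item You justify the lower bound $\mathcal{S}_{\cF_{a,b}}(w,D)\gg D$ via $w\ge 0$ and $0<P_3<1$.
\end{itemize}
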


\begin{proof}
	The desired asymptotic formula can be proven by direct computation. 
	The main idea is to compare the logarithms of $\mathfrak f_d$ and $d^2$ by writing
	$$\log \N(\mathfrak{f}_d) = \log D^2 
	+ \log (d^2/D^2)  - \log (d^2/\N(\mathfrak{f}_d)).$$
	The first term on the right-hand side gives the main term.
	For the second term, since $w$ is Schwartz and $w(0) =0$, the function $t \mapsto w(t) \log(t^{2}) $ is also Schwartz and vanishing at 0, so, as in~\eqref{Eq total sum}, one has
	$$ \sum_{d\in \mathcal F_{a,b}} w\left(\frac{d}{D}\right) \log (d^2/D^2) 
	\ll_{\cF_{a,b},w} D \ll_{\cF_{a,b},w} \mathcal{S}_{\mathcal F_{a,b}} (w, D).  $$ 	
	
	For the last term, employing \eqref{conductor}, we write $\N(\mathfrak{f}_d) = 2^\ell \rad(d)^2$. 
	Since $\N(\mathfrak f_d)$ and $d^2$ have the same odd prime divisors, we get
	$$\log (d^2/\N(\mathfrak f_d)) = \log \bigg( 2^{-\ell}\prod_{\substack{p^\alpha||d \\  \alpha\geq 1}} p^{2\alpha-2} \bigg)
	=  \sum_{\substack{p^\alpha||d  \\ \alpha \geq 2}} \log p^{2\alpha-2} +O(1).$$
	Summing over fourth-power-free $d\in \cF_{a,b}$, we then have
	\begin{equation}\begin{split}\label{ConductorCondition2}\notag
			\sum_{d\in \cF_{a,b}} w\left(\frac{d}{D}\right) & \log (d^2/\N(\mathfrak f_d)) 
			=  \sum_{d\in \mathcal F_{a,b}} w\left(\frac{d}{D}\right)  
			\sum_{\substack{p^\alpha||d \\ \alpha \in \{2,3\}}} \log p^{2\alpha-2} 
			+ O(\mathcal{S}_{\mathcal F_{a,b}}(w,D)).
	\end{split}\end{equation}
	We then bound the first term above as follows:
	\begin{equation}\begin{split}\notag
			\sum_{d\in \mathcal F_{a,b}} w\left(\frac{d}{D}\right)  
			\sum_{\substack{p^\alpha||d \\  \alpha \in \{2,3\}}} \log p^{2\alpha-2} 
			&\ll \sum_{d\in \mathcal F_{a,b}} w\left(\frac{d}{D}\right)  \sum_{p^2||d } \log p^{2} 
			+ \sum_{d\in \mathcal F_{a,b}} w\left(\frac{d}{D}\right)  \sum_{p^3||d } \log p^{4} \\
			&\ll   \sum_{\substack{p >2}} \log p \sum_{\substack{ d'p^2 \in \mathcal F_{a,b}  }} w\left(\frac{d'p^2}{D}\right) 
			+ \sum_{\substack{p >2}} \log p \sum_{\substack{ d'p^3 \in \mathcal F_{a,b}  }} w\left(\frac{d'p^3}{D}\right)  \\
			&\ll D  \sum_{\substack{p >2}} \frac{\log p}{p^2} + D\sum_{\substack{p >2}} \frac{\log p}{p^3}
			\ll   D.  
	\end{split}\end{equation}  
	which concludes the proof.
\end{proof}

\subsection{Explicit formula} 
 
For a fixed $d$ odd fourth-power-free integer,
taking the logarithmic derivative of the functional equation for $L(s,\xi_d)$, we obtain
$$ \frac{L'}{L}(s, \xi_d) = \frac{X_d'}{X_d}(s) - \frac{L'}{L}(1-s, \xi_d), $$
where
\begin{align*}
	X_d(s) &	= W(\xi_d) (\N(\mathfrak{f}_d))^{\frac12 - s} \pi^{2s-1} 
	\frac{\Gamma\left( \frac32 - s \right)}{\Gamma \left( s+\frac12 \right)}.
\end{align*}
Computing the logarithmic derivatives of the Euler product for $L(\cdot,\xi_d)$ and and of $X_d$, we find that
\begin{align} \label{log-derivative-1}
	- \frac{L'}{L}(s, \xi_d) 
	&=  \sum_{\substack{p \equiv 1 \bmod 4\\ p = \pi\overline{\pi} \\ \pi \equiv 1 \bmod \lambda^3\\ k \geq 1}} 
	\frac{ \left( \xi_d((\pi))^k + \overline{\xi_d}((\pi))^k \right)\log{p}}{p^{ks}}
	+  2 \sum_{\substack{p \equiv 3 \bmod 4 \\ p\nmid d \\k \geq 1 }}  \frac{(-1)^k \log{p}}{p^{2sk}} \\
	\frac{X'_d}{X_d}(s) &= - \log \N(\mathfrak{f}_d) + 2 \log{\pi} - \frac{\Gamma'}{\Gamma}\big(\tfrac32 - s \big)
	- \frac{\Gamma'}{\Gamma}\big( s  + \tfrac12 \big).
 \end{align}
 
Let $D>0$ and $\phi$ be an even test function with compact support, and $D>0$. Since $L(s,\xi_d)$ has no trivial zeros for $\re(s) \geq -\tfrac14$, we obtain
\begin{align}\begin{split}\label{U_in_OLD}
	\mathcal{D}(\phi, E_d) &= \frac{1}{2 \pi i} \left( \int_{(\frac54)} - \int_{(-\frac14)}  \right) \frac{L'}{L}(s, \xi_d) \phi \left( \frac{\log{ D}}{\pi} \frac{s-\frac12}{i} \right) \d s \\
	&= \frac{1}{2 \pi i} \int_{(\frac54)}  \left( \frac{L'}{L}(s, \xi_d) -   \frac{L'}{L}(1-s, \xi_d) \right) \phi \left( \frac{\log D}{\pi} \frac{s-\frac12}{i} \right) \d s \\
	&=   \frac{1}{2 \pi i} \int_{(\frac54)}  \left( 2 \frac{L'}{L}(s, \xi_d) -   \frac{X'_d}{X_d}(s) \right)\phi \left( \frac{\log{D}}{\pi} \frac{s-\frac12}{i} \right) \d s \\
	&=  U_{\infty}(\phi, \xi_d)  +  U_{\text{inert}}(\phi, \xi_d) + U_{\text{split}}(\phi, \xi_d),
\end{split}\end{align}
where
\begin{align} \begin{split} \label{define-U}
	U_{\infty}(\phi, \xi_d) 
	&:= \frac{1}{2 \pi i} \int_{(\frac54)}  - \frac{X'_d}{X_d}(s) \phi \left( \frac{\log{D}}{\pi} \frac{s-\frac12}{i} \right) \d s \\
	U_{\text{inert}}(\phi, \xi_d) 
	&:= -\frac{2}{\pi i} \int_{(\frac54)}  \Big(\sum_{\substack{p \equiv 3 \bmod 4 \\ p\nmid d \\ k \geq 1}}  \frac{(-1)^k \; \log{p} }{p^{2ks}} \Big) \phi \left( \frac{\log{D}}{\pi} \frac{s-\frac12}{i} \right) \d s \\
	U_{\text{split}}(\phi, \xi_d)   &:= - \frac{1}{\pi i} \int_{(\frac54)}  \Big( \sumsplit \frac{\big( \xi_{d}^k (({\pi})) + \overline{\xi_d}^k((\pi)) \big) \log{p}}{p^{ks}} \Big) \phi \left( \frac{\log{D}}{\pi} \frac{s-\frac12}{i} \right) \d s.
\end{split}\end{align}
Similarly to \cite[Lem.~3.1]{DDW} with $k=1$, we have
\begin{align*}
	U_{\infty}(\phi, \xi_d) 
	= \frac{\log{\N({\mathfrak{f}_d)}}}{\log{D^2}}\widehat\phi(0) +O\bigg(\frac{1}{\log{D}}  \bigg) 
\end{align*}
from which it then follows by Lemma~\ref{Lem Conductor condition} that
\begin{align} \label{U-infty}
	\frac{1}{\mathcal{S}_{\mathcal F_{a,b}} (w, D)} \sum_{\substack{ d\in \cF_{a,b}}}  U_{\infty}(\phi, \xi_d) w\left(\frac{d}{D}\right)
	&= \widehat{\phi}(0) + O \left( \frac{1}{\log{D}} \right). 
\end{align}

We now move to the study of the term $U_{\text{inert}}$ in \eqref{U_in_OLD}.
Shifting the contour to the line $\re(s)= \frac12$, and applying the change of variables $s=\frac12 + \frac{ \pi i t}{\log{D}}$, we write
\begin{align*}
	U_{\text {inert}} (\phi, \xi_d) 
	&= -\frac{2}{\log{D}} \sum_{\substack{p \equiv 3 \bmod 4 \\ p\nmid d  \\ k \geq 1}}  \frac{(-1)^k \log{p}}{p^{k}}   
	\int_{-\infty}^\infty \frac{1}{p^{2k \left( \frac{i t \pi}{\log D} \right)}} \phi \left( t \right) \d t \\
	&= -\frac{2}{\log{D}} \sum_{\substack{p \equiv 3 \bmod 4 \\ p\nmid d  \\ k \geq 1}}  \frac{(-1)^k \log{p}}{ p^{k}}  \widehat{\phi} \left( \frac{k \log{p}}{\log{D}} \right) .
\end{align*}
Working as in \cite[Lemma 3.5]{DDW}, this gives 
\begin{align}\label{U-inert}
	U_{\text {inert}} (\phi, \xi_d) = \frac{1}{2} \int_\R \widehat{\phi}(u) \; \d u + O \left( \frac{1}{\log{D}} \right).
\end{align}
Plugging \eqref{U-infty} and \eqref{U-inert} into \eqref{U_in_OLD} yields
\begin{align*}
	\mathscr{D}_{a,b}(\phi; w,D) 
	=\widehat{\phi}(0) + \frac{1}{2} \int_\R \widehat{\phi}(u) \; \d u + \frac{1}{\mathcal{S}_{\cF_{a,b}}(w,D)} 
	\sum_{\substack{d\in \cF_{a,b}}} U_{\text {split}} (\phi, \xi_d) w\left(\frac{d}{D}\right) + O \left( \frac{1}{\log{D}} \right).
\end{align*}

Proceeding as in \cite[Lemma 3.2]{DDW},  we write
\begin{align*}
	U_{\text {split}} (\phi, \xi_d) 
 	&=-\frac{1}{\log D}  \newsumsplit  \frac{\log{p}}{p^{\frac{k}{2}}}  \;  \xi_d^{k}((\pi))   \; \widehat{\phi} \; \left( \frac{ k \log{p}}{2\log{D}} \right),
\end{align*}
where we now sum over the primes $\pi \in \Z[i]$ and note that $\xi_d^{k}((\overline{\pi})) = \overline{\xi}_d^{k}((\pi))$.
Changing the variable $\pi \rightarrow \overline \pi$ in the sum, the contribution of the split primes to $\mathscr{D}_{a,b}(\phi, w,D)$ is then
\begin{align*}\begin{split}
	\sum_{d \in \mathcal{F}_{a,b}} U_{\text {split}} (\phi, \xi_d) w\left(\frac{d}{D}\right) 
	=-\frac{1}{\log D} \newsumsplit 
	\frac{\log{p}}{p^{\frac{k}{2}}}  \sum_{d \in \mathcal{F}_{a,b}} \overline{\xi}_d^{k}((\pi)) \widehat{\phi} \left( \frac{ k \log{p}}{2\log{D}} \right) w\left(\frac{d}{D}\right).
\end{split}\end{align*}
Since ${\mathcal{S}_{\cF_{a,b}}(w,D)} \asymp_{\cF_{a,b},w} D$, using the definition of $\xi_d$ in~\eqref{Def_xi_d}, we may summarize the results of this section as follows.
\begin{remark}\label{Prop To Prove}
	Let $w$ satisfy the conditions in \eqref{w-conditions}, 
	and $\phi$ be an even Schwartz function such that the support of $\widehat{\phi}$ is contained in $(- \nu, \nu)$.  
	Theorem \ref{main-theorem OLD} then follows provided that
	\begin{align} \label{TO-PROVE-3}
		\newsumsplit
		\frac{\log{p}}{p^{\frac{k}{2}}}  \left( \frac{\overline{\pi}}{|\pi|} \right)^k 
		\widehat{\phi} \left( \frac{k \log{p}}{2\log{D}} \right) 
		\sum_{d \in \mathcal{F}_{a,b}} \quartic{d}{\pi}^k  w\left(\frac{d}{D}\right) 
		\ll D
	\end{align}
	as $D\to \infty$.
\end{remark}

\section{Poisson summation formula}\label{Section Poisson}

In this section, we apply Poisson summation to the $d$-sum in \eqref{TO-PROVE-3}. To do so, we first use the Riemann Hypothesis together with a M\"obius inversion argument to replace the sum over fourth-power-free integers to a more tractable sum over all integers.

\begin{lemma} \label{Lemma sieve before Poisson}
	Let $w$ satisfy the conditions in \eqref{w-conditions}, and $\phi$ be an even Schwartz function such that the support of $\widehat{\phi}$ is contained in $(- \nu, \nu)$.  
	Let $a,b$ be positive odd integers, and assume the Riemann Hypothesis for~$L(s, \xi_d)$ for all odd integers $d$. Then, for any $\varepsilon >0$ and any $D,y >0$, we have 
	\begin{align*}
		\sum_{\substack{\pi \; \mathrm{primary} \\ \N(\pi) =  p \equiv 1 \bmod 4\\k \geq 1}} &  \frac{\log{p}}{p^{\frac{k}{2}}} \left( \frac{\overline{\pi}}{|\pi|} \right)^k \widehat{\phi} \left( \frac{ k \log{p}}{2\log{D}} \right) \sum_{d \in \mathcal{F}_{a,b}}  \quartic{d}{\pi}^k  w\left(\frac{d}{D}\right) \\
		&= \sum_{d_2,d_3,\ell < y} \sum_{\substack{\delta\mid d_2d_3 \\A \equiv B \bmod{4} }} \mu(\ell)\mu(\delta) \sum_{\substack{d \in \Z \\ d\equiv A \bmod 16}} 
	w \left( \frac{d \delta  \ell^2 d_{2}^{2}d_{3}^{3}}{D} \right) \\
		& \hspace{1cm} \times \sum_{\substack{\pi \; \mathrm{primary} \\ \N(\pi) =  p \equiv 1 \bmod 4\\k \geq 1}} \quartic{d \delta  \ell^2 d_{2}^{2}d_{3}^{3}}{\pi}^k \frac{\log{p}}{p^{\frac{k}{2}}}  
		\left( \frac{\overline{\pi}}{|\pi|} \right)^k \widehat{\phi} \left( \frac{ k \log{p}}{2\log{D}} \right) + O \left( \frac{D^{1+\varepsilon}}{y} \right)
	\end{align*}
	as $D\to\infty$,
	where $A = a\delta^{3}\ell^{2}d_{2}^{2}d_{3}$ and $B= b\delta d_{2}d_{3}$.
\end{lemma}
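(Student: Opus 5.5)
We first detect the conditions defining $\mathcal{F}_{a,b}$ by Möbius-type inversion, truncate the resulting sums at $y$, and bound the tails by GRH. Every odd $d$ factors uniquely as $d = d_1 d_2^2 d_3^3 m^4$ with $d_1,d_2,d_3$ squarefree and pairwise coprime. The condition ``$d$ fourth-power-free'' is $m=1$, detected by $\sum_{\ell^4 \mid \cdots}\mu(\ell)$. The condition on $\rad(d)=d_1d_2d_3 \bmod 4$ is not multiplicative in $d$ alone, so we parametrise $d$ by $(d_2,d_3)$. Write $d = e\, d_2^2 d_3^3$ with $e$ coprime to $d_2d_3$. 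We remove the coprimality condition with $\sum_{\delta\mid (e, d_2d_3)}\mu(\delta)$, writing $e=\delta d'$. The squarefreeness of $e$ and of $d_2,d_3$ is handled by a further Möbius variable $\ell$, giving the factor $\ell^2$ in the statement.

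We then track congruences. With $d=d'\delta\ell^2 d_2^2d_3^3$, we have $d\equiv a \bmod 16$ iff $d'\equiv a\delta^{-1}\ell^{-2}d_2^{-2}d_3^{-3} \bmod 16$. Since $x^{-1}\equiv x^{3}$ up to squares, and squares of odd numbers are $1 \bmod 8$, this residue can be expressed as $A$. Carrying out this reduction carefully, using that odd fourth powers are $\equiv 1 \bmod 16$ and odd squares are $\equiv 1\bmod 8$, gives exactly $A=a\delta^3\ell^2d_2^2d_3$. The radical condition becomes a condition on $d' \bmod 4$, namely $\rad$ of the squarefree part times $d_2d_3\delta$. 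Since $d'$ squarefree coprime part has $\rad(d')\equiv d' \bmod 4$, this gives the condition $A\equiv B \bmod 4$ with $B=b\delta d_2d_3$. I expect some bookkeeping to make the precise shapes of $A,B$ match; that is routine, and I would simply verify it case by case mod 16.

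After these inversions, the $d$-sum over $\mathcal{F}_{a,b}$ equals the full sum over $d_2,d_3,\ell$ (unrestricted) of the displayed inner expressions. We truncate at $d_2,d_3,\ell<y$. The key step is bounding the tail, say $\ell\ge y$ (the $d_2,d_3$ tails are analogous and easier because of the cubes). For fixed outer variables, the inner $\pi$-sum is, for fixed $n=d\delta\ell^2d_2^2d_3^3$, precisely the prime side of the explicit formula for $L(s,\xi_n)$ tested against $\widehat\phi$. Under GRH, i.e. the Riemann Hypothesis for $L(s,\xi_n)$ for odd $n$ (possibly non-fourth-power-free, which only differs from the primitive character at finitely many primes dividing $n$, contributing $O(\log n)$), this sum is $\ll D^{\varepsilon}$. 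Indeed, by the explicit formula it equals a sum over zeros minus archimedean and inert terms, each $\ll \log(Dn)$, since $\widehat\phi$ is compactly supported and $p^{k/2}$ is bounded by $D^{\nu}$ in the range.

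Hence the tail is bounded by
\[
\ll D^{\varepsilon}\sum_{\ell\ge y}\sum_{d_2,d_3}\tau(d_2d_3)\sum_{d} \Big|w\Big(\frac{d\delta\ell^2d_2^2d_3^3}{D}\Big)\Big| \ll D^{1+\varepsilon}\sum_{\ell\ge y}\ell^{-2}\ll \frac{D^{1+\varepsilon}}{y},
\]
and similarly for the $d_2$ tail, with $\sum d_2^{-2+\varepsilon}$, and for the $d_3$ tail. Here we use the rapid decay of $w$. The main obstacle is justifying the uniform GRH bound for imprimitive $n$; we handle it by comparing with the primitive character $\xi_{n_0}$ of its fourth-power-free part, whose conductor is $\ll n$.
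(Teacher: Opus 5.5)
Your argument follows the paper's proof except at one step, where you use a different but valid tool. The reduction is the same as the paper's:
\begin{itemize}
\item write $d=d_1d_2^2d_3^3$;
\item remove $(d_1,d_2d_3)=1$ with $\delta$ and the squarefreeness of the $d_1$-part with $\ell^2$;
\item rewrite the congruences using $x^4\equiv 1\pmod{16}$;
\item truncate at $y$, and bound each tail by a GRH bound for the inner prime sum $E$ times $\sum_{d\ \mathrm{odd}}|w(dM/D)|\ll D/M$.
\end{itemize}

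The difference is how you bound $E$. You relabel $\pi\mapsto\overline{\pi}$ to identify $E$ with $-\log D\cdot U_{\mathrm{split}}(\phi,\xi_{n_0})$, where $n_0$ is the fourth-power-free part of $n$. This holds up to the Euler factors at primes dividing $n$ but not $n_0$, which cost $O(\log|n|)$. You then use the explicit formula \eqref{U_in_OLD}:
\begin{itemize}
\item on GRH the ordinates $\gamma$ are real, and the local count $N(T+1)-N(T)\ll\log(\N(\mathfrak f_{n_0})(|T|+2))$ gives $\mathcal D\ll\log\N(\mathfrak f_{n_0})$;
\item $\log D\cdot U_\infty\ll\log\N(\mathfrak f_{n_0})+1$ and $U_{\mathrm{inert}}\ll 1$;
\item hence $E\ll\log D\,\log(3|n|)$.
\end{itemize}
The paper never passes through zeros. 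It writes $E$ as a Mellin integral of $-\frac{L'}{L}(s+\frac12,\overline{\xi}_n)$ (minus an inert-prime correction) against $\widetilde F_{D^2}(s)$. It then shifts to $\re(s)=\varepsilon$ and uses the GRH bound $\frac{L'}{L}\ll(|t|\,|n|)^{\varepsilon}$ to the right of the critical line, obtaining $E\ll D^{2\nu\varepsilon}(|n|\delta\ell d_2d_3)^{\varepsilon}$.

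Your route reuses an identity already proved in Section~\ref{section-EF} and gives a polylogarithmic bound instead of a $D^{\varepsilon}$ one. The paper's route needs only a pointwise bound for $L'/L$ and matches the Mellin template used elsewhere. Either suffices.

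One caution: $w$ is Schwartz but not compactly supported, so $|n|$ is not bounded by $D$. Do not write $E\ll D^{\varepsilon}$. Keep the $\log|n|$ and let the decay of $w$ absorb it, as the paper does by splitting at $|d|\approx D/(\delta\ell^2d_2^2d_3^3)$.

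There are also three points in the sieve to fix.
\begin{itemize}
\item Keep $d_2,d_3$ squarefree and coprime as summation conditions, as the paper does. Here $\ell$ only sieves the $d_1$-part; it does not handle $d_2,d_3$.
\item The paper's displayed computation has the same slip as yours here. After writing $e=\delta d'$, squarefreeness of $e$ means $\mu^2(d')\mathbf{1}_{(d',\delta)=1}$, not $\mu^2(d')$. With $\ell$ unrestricted, the identity counts non-fourth-power-free $d$. For example, every $d=p^4m$ ($m$ squarefree, $p\nmid m$) that passes the congruence tests gets net weight $-1$, coming from $d_2=p$, $\delta=p$, $\ell=1$. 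These terms are not controlled by the $O(D^{1+\varepsilon}/y)$ error. The fix: for $(e,d_2d_3)=1$ one has $\mu^2(e)=\sum_{\ell^2\mid e,\ (\ell,d_2d_3)=1}\mu(\ell)$, and for such $\ell$, $(e,d_2d_3)=1$ iff $(e/\ell^2,d_2d_3)=1$. Detecting the latter with $\delta$ gives the stated main term with the extra condition $(\ell,d_2d_3)=1$. This changes nothing in the tail estimate or later.
\item Again as in the paper, $\rad(d')\equiv d'\pmod 4$ fails for $d'<0$, where $\rad(d')=|d'|$. For negative $d$ the radical condition becomes $A\equiv -B\pmod 4$. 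As written, $A\equiv B\pmod 4$ is correct only for $d>0$.
\end{itemize}
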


\begin{proof}
	To sieve out the desired conditions, we write each fourth-power-free $d\in \cF_{a,b}$ as $d = d_{1}d_{2}^{2}d_{3}^{3}$, where $d_{1} \in \Z$ and  $d_{2}, d_{3} \in \Z_{\geq 1}$ are square-free odd and pairwise coprime, so that
	\begin{align} \nonumber
		\sum_{d \in \mathcal{F}_{a,b}}  \quartic{d}{\pi}^k  w\left(\frac{d}{D}\right) &=
		\sum_{\substack{d \in \Z \\ \text{fourth-power-free}\\
		d \equiv a \bmod{16}\\ \rad(d) \equiv b \bmod{4}\\}}  \quartic{d}{\pi}^k    w \left( \frac{d}{D} \right)\\
		\label{sieve-out}
		&= \sum_{\substack{d_{3}  ~\square \text{-free}}}\quartic{d_{3}^{3}}{\pi}^k \sum_{\substack{d_{2}  ~\square \text{-free}\\ (d_{2},d_{3}) = 1}}\quartic{d_{2}^{2}}{\pi}^k \sum_{\substack{d_{1} \in \Z, ~\square \text{-free}\\(d_{1},d_{2}d_{3}) = 1 \\d_{1} \equiv ad_{2}^{-2}d_{3}^{-3}\bmod{16}\\d_{1} \equiv bd_{2}^{-1}d_{3}^{-1}\bmod{4}}}\quartic{d_{1}}{\pi}^k w \left( \frac{d_{1}d_{2}^{2}d_{3}^{3}}{D} \right).
	\end{align}
	We have
	\begin{align*}
		&\sum_{\substack{d_{1} \in \Z, ~\square \text{-free}\\(d_{1},d_{2}d_{3}) = 1 \\d_{1} \equiv ad_{2}^{-2}d_{3}^{-3}\bmod{16}\\d_{1} \equiv bd_{2}^{-1}d_{3}^{-1}\bmod{4}}}\quartic{d_{1}}{\pi}^k w \left( \frac{d_{1}d_{2}^{2}d_{3}^{3}}{D} \right) 
		= \sum_{\substack{d_{1} \in \Z, ~\square \text{-free}\\d_{1} \equiv ad_{2}^{-2}d_{3}^{-3}\bmod{16}\\d_{1} \equiv ad_{2}^{-1}d_{3}^{-1}\bmod{4}}}\sum_{\delta \mid (d_{1},d_{2}d_{3})}\mu(\delta)\quartic{d_{1}}{\pi}^k w \left( \frac{d_{1}d_{2}^{2}d_{3}^{3}}{D} \right)\\
		&= \sum_{\delta \mid d_{2}d_{3}}\mu(\delta)\quartic{\delta}{\pi}^k \sum_{\substack{d'_{1} \in \Z \\d'_{1} \equiv a\delta^{-1}d_{2}^{-2}d_{3}^{-3}\bmod{16}\\d'_{1} \equiv b\delta^{-1}d_{2}^{-1}d_{3}^{-1}\bmod{4}}}\sum_{\substack{\ell^{2} \mid d'_{1}}}\mu(\ell)\quartic{d'_{1}}{\pi}^k w \left( \frac{d'_{1}\delta d_{2}^{2}d_{3}^{3}}{D} \right)\\
		&= \sum_{\delta \mid d_{2}d_{3}}\mu(\delta)\quartic{\delta}{\pi}^k \sum_{\ell}\mu(\ell)\quartic{\ell^{2}}{\pi}^k \sum_{\substack{d \in \Z \\d \equiv a\delta^{3}\ell^{2}d_{2}^{2}d_{3}\bmod{16}\\d \equiv b\delta d_{2}d_{3}\bmod{4}}}\quartic{d}{\pi}^k w \left( \frac{ d\delta \ell^2 d_{2}^{2}d_{3}^{3}}{D} \right).
	\end{align*}
	where in the last line we moreover note that $x^{4} \equiv 1 \bmod{16}$ and $x^{2}\equiv 1\bmod{4}$ for any odd $x \in \Z$.
	Replacing in \eqref{sieve-out} and then in the prime sum, we obtain
	\begin{multline}\label{Eq sieve prime sum}
		\sum_{\substack{\pi \; \mathrm{primary} \\ \N(\pi) =  p \equiv 1 \bmod 4\\k \geq 1}}   \frac{\log{p}}{p^{\frac{k}{2}}} \left( \frac{\overline{\pi}}{|\pi|} \right)^k \widehat{\phi} \left( \frac{ k \log{p}}{2\log{D}} \right) \sum_{d \in \mathcal{F}_{a,b}}  \quartic{d}{\pi}^k  w\left(\frac{d}{D}\right) \\	
		= \sum_{\substack{d_2,d_3 \\ \square \text{-free} \\ (d_2, d_3)=1}} \sum_{\ell}\sum_{\substack{\delta\mid d_2d_3 \\ A \equiv B \bmod{4} }} \mu(\ell)\mu(\delta) \sum_{\substack{d \in \Z \\ d\equiv A \bmod 16}} w \left( \frac{ d\delta \ell^2 d_{2}^{2}d_{3}^{3}}{D} \right) \\
		\times \sum_{\substack{\pi \text{ primary} \\ \N(\pi) = p \equiv 1 \bmod 4\\k \geq 1}}
		\frac{\log{p}}{p^{\frac{k}{2}}}  \left( \frac{\overline{\pi}}{|\pi|} \right)^k \quartic{ d \delta\ell^2 d_{2}^{2}d_{3}^{3}}{\pi}^k\widehat{\phi} \left( \frac{ k \log{p}}{2\log{D}} \right).
	\end{multline}
	Taking $d_2, d_3, \ell < y$ gives the main term of the lemma. 

	For the remainder sum, we denote
	\begin{align*}
		E
		&:= \sum_{\substack{\pi \text{ primary} \\ \N(\pi) = p \equiv 1 \bmod 4\\k \geq 1}} \frac{\log{p}}{p^{\frac{k}{2}}} \overline{\xi}^k_{ d \delta\ell^2 d_{2}^{2}d_{3}^{3}}((\pi)) \;
		\widehat{\phi} \left( \frac{ k\log{p}}{2\log{D}} \right) 
	\end{align*}
	the inner prime sum in~\eqref{Eq sieve prime sum}.
	We also define 
	\begin{align*}
		F_{X}(t) := \widehat{\phi} \left( \frac{ \log{t}}{\log{X}} \right) .
	\end{align*}
	and we remark that for $\sigma > 0$, its Mellin transform $\widetilde F_X : s \mapsto \int_{0}^{\infty} F_X(u) u^{s-1} \dd u$ is such that
	\begin{align} \label{bound-MT}
		\widetilde F_X(\sigma + it) \ll X^{\nu \sigma} \min{(1, \lvert t\rvert^{-2})}.
	\end{align}
	This is similar to the proof of Lemma \ref{TransformBounds} below for the more complicated function $R_{X, Y}(t) = \widehat{\phi} ( \frac{ \log{t}}{\log{X}} )  \widehat{w} ( \frac{Y}{t} ) .$
	Then by Mellin inversion, and taking $X=D^2$, we get
	\begin{align*} 
		E =  \sum_{\substack{\pi \text{ primary} \\ \N(\pi) = p \equiv 1 \bmod 4\\k \geq 1}} \frac{\log{p}}{p^{\frac{k}{2}}} \; \overline{\xi}^k_{ d \delta\ell^2 d_{2}^{2}d_{3}^{3}}((\pi)) \;
		F_{D^2}(p^k)	
		= \frac{1}{2 \pi i} \int_{(2)} G_{ d\delta \ell^2 d_{2}^{2}d_{3}^{3}}(s+\tfrac12) \widetilde F_{D^2}(s) {\d s}
	\end{align*}
	where by \eqref{log-derivative-1}
	$$G_{ d \delta\ell^2 d_{2}^{2}d_{3}^{3}}(s) = - \frac{L'}{L} (s, \overline{\xi}_{ d \delta\ell^2 d_{2}^{2}d_{3}^{3}}) -  2 \sum_{\substack{\pi \text{ primary}\\N (\pi ) = p^2 \\ p \equiv 3 \bmod 4\\ k \geq 1}} \frac{\overline{\xi}_{d\delta  \ell^2 d_{2}^{2}d_{3}^{3}}^k((\pi)) \log{p}}{p^{2 k s}}.$$
	Observe that the sum is absolutely convergent for $\re(s) >\frac12$.
	As we assume $L(s, \overline{\xi}_{d})$ satisfies the Riemann Hypothesis for all $d$, then $\frac{L'}{L}(s, \overline{\xi}_d)$ satisfies the Lindel{\"o}f Hypothesis for all $d$. In particular, for any $\varepsilon>0$, we have
	$$ \frac{L'}{L}(s, \overline{\xi}_d) \ll_\sigma (t \lvert d\rvert)^\varepsilon, $$
	for any $s = \sigma+it$ such that $\frac12 < \sigma < \frac54$ by~\cite[Theorem 5.17]{IK}. 
	Then, moving the integral to $\re(s) = \varepsilon$, and using \eqref{bound-MT}, we get that 
	\begin{align*}
		E  \ll_{\varepsilon} D^{2\nu\varepsilon} ( \lvert d \rvert \delta \ell d_2 d_3)^\varepsilon.
	\end{align*}
	In the case $\ell \geq y$, we use $w(t) \ll \min(1,\lvert t\rvert^{-2})$, since $w$ is a Schwartz function, to obtain  
	\begin{align*}
		\sum_{\substack{d_2,d_3 \geq 1 \\ \square \text{-free} \\ (d_2, d_3)=1}} \sum_{\ell \geq y} & \sum_{\substack{\delta\mid d_2d_3 \\ A \equiv B \bmod{4} }} \mu(\ell)\mu(\delta) \sum_{\substack{d \in \Z \\ d\equiv A \bmod 16}} w \left( \frac{ d\delta \ell^2 d_{2}^{2}d_{3}^{3}}{D} \right) E  \\
		& \ll_\varepsilon D^{2\nu\varepsilon}  \sum_{\ell \geq y}	\sum_{d_3,d_2}\sum_{\delta\mid d_2d_3 }  \bigg(\sum_{ \lvert d\rvert \leq  \frac{D}{\delta\ell^2d_2^2d_3^3} }  
		( d \delta\ell d_2 d_3)^{\varepsilon} + \sum_{\lvert d\rvert >  \frac{D}{\delta \ell^2 d_2^2d_3^3} } D^2 ( d \delta\ell^2 d_2^2 d_3^3)^{\varepsilon -2}\bigg) \\ 
		& \ll_\varepsilon D^{2\nu\varepsilon} \sum_{\ell \geq y}	\sum_{d_3,d_2}\sum_{\delta\mid d_2d_3 }  \bigg( \frac{D^{1+\varepsilon}}{\delta \ell^{2+\varepsilon} d_2^{2+\varepsilon} d_3^{3 + \varepsilon}} +  \frac{D^{1+\varepsilon}} {\delta \ell^{2+\varepsilon} d_2^{2+\varepsilon} d_3^{3 + \varepsilon}}\bigg)   
		\ll_{\varepsilon} \frac{D^{1+2(\nu+1)\varepsilon}}{y}.
	\end{align*}
	Estimating loosely the sum over divisors $\delta$ as $O((d_2d_3)^{\varepsilon})$, the contributions for $d_2 \geq y$ and $d_3 \geq y$ are bounded as above.
	Since $\varepsilon>0$ is arbitrary, the claim follows.
\end{proof}
 
By applying Lemma~\ref{Lemma sieve before Poisson} to Remark~\ref{Prop To Prove} with $y= D^{\varepsilon}$, our desired goal reduces to the following.

\begin{remark}\label{Remark TO Prove 2}
	Let $w$ satisfy the conditions in \eqref{w-conditions}, $\phi$ be an even Schwartz function such that the support of $\widehat{\phi}$ is contained in $(- \nu, \nu)$, and $\varepsilon > 0$.
	Theorem \ref{main-theorem OLD} then follows provided that
	there exist $\delta >30\varepsilon$ such that
	for any integer $M < D^{9\varepsilon}$ and any odd integer $\alpha$, one has 
	\begin{align} \label{sieve-is-gone}
		\sum_{\substack{\pi \text{ primary} \\ \N(\pi) = p \equiv 1 \bmod 4\\k \geq 1}} \frac{\log{p}}{p^{\frac{k}{2}}} \left(  \frac{\overline{\pi}}{|\pi|} \right)^k  \quartic{M}{\pi}^k  
		\widehat{\phi} \left( \frac{ k \log{p}}{2\log{D}} \right)  \sum_{d \equiv \alpha \bmod16} \quartic{d}{\pi}^k   w \left( \frac{d M}{D} \right)   \ll M^2 D^{1-\delta}.
	\end{align}
 	In particular, using the P\'olya--Vinogradov inequality~\cite[\S~23]{Dav} to bound the inner sum, we deduce that  Theorem~\ref{main-theorem OLD} is true under the restricted assumption that $\nu < \frac12$ (under GRH).
\end{remark} 
  
Having dealt with the fourth-power-freeness of $d$, we now sieve out the congruence condition modulo $16$. This is achieved via the following identity a direct consequence of the orthogonality of Dirichlet characters:

\begin{equation}\label{d_character_sum}
	\sum_{\substack{ d \equiv \alpha \bmod 16}}  \quartic{d}{\pi}^k w \left( \frac{d M }{D} \right) = \frac{1}{8} \sum_{\chi \bmod 16}\chi(\alpha^{-1})  \sum_{d \in \Z}\chi(d)  \quartic{d}{\pi^k} w \left( \frac{d M}{D} \right), 
\end{equation}
where $\chi$ runs over the Dirichlet characters modulo $16$ (over $\Z$).

\begin{lemma}[Poisson]\label{Poisson} 	
	Let $w$ satisfy the conditions in \eqref{w-conditions}. Let $\chi$ be one of the  Dirichlet character modulo $16$ over $\Z$, $k \in \Z_{\geq 1}$, and let $\pi$ be a primary Gaussian prime with $\N(\pi) =p$.  
	Then, 
	\begin{align*} 
		S_{D,\chi}(\pi, k) &:=  \sum_{d \in \mathbb Z}    \chi(d) \quartic{d}{\pi^k} w \left( \frac{d}{D} \right) \\
		& = \frac{D}{16 p^k}  \quartic{\overline{\pi}^k}{\pi^k} \chi(p^k) \sum_{m \in \Z}  g_4(m, \pi^k) G(m, \chi, 16)  \widehat{w} \left( \frac{m D }{16 p^k} \right).
	\end{align*}
	where the Gauss sums $g_4(m,\pi^k)$ and $G(m, \chi, 16)$ are respectively defined by \eqref{Def-Gauss-sum} and~\eqref{weird-Gauss-sum}.
\end{lemma}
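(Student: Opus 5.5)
The plan is a direct application of Poisson summation to a periodic function, followed by a Chinese Remainder Theorem factorisation of the resulting complete exponential sum.

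\emph{Step 1 (periodicity and Poisson).} Since $\pi$ is a split prime with $\N(\pi)=p$, the inclusion $\Z \hookrightarrow \Z[i]$ induces an isomorphism $\Z/p^k\Z \cong \Z[i]/(\pi^k)$. Hence $d \mapsto \quartic{d}{\pi^k}$ is periodic modulo $p^k$ on $\Z$, and $f(d):=\chi(d)\quartic{d}{\pi^k}$ is periodic modulo $q:=16p^k$. I will split $d$ into residue classes $a \bmod q$ and apply Poisson summation to $\sum_{n} w\big(\frac{a+qn}{D}\big)$. This gives
\begin{align*}
S_{D,\chi}(\pi,k)=\frac{D}{q}\sum_{m\in\Z}\widehat w\Big(\frac{mD}{q}\Big)\sum_{a \bmod q} f(a)\, e\Big(\frac{am}{q}\Big).
\end{align*}
This already produces the prefactor $\frac{D}{16p^k}$ and the argument $\frac{mD}{16p^k}$ of $\widehat w$. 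No convergence issue arises, because $w$ is Schwartz.

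\emph{Step 2 (CRT factorisation).} I will write $a = x\cdot 16\,\overline{16} + y\cdot p^k\,\overline{p^k}$, where $\overline{16}$ denotes the inverse of $16$ modulo $p^k$ and $\overline{p^k}$ the inverse of $p^k$ modulo $16$. Then $f(a)=\chi(y)\quartic{x}{\pi^k}$, and $e(am/q)=e\big(x m\overline{16}/p^k\big)\,e\big(y m\overline{p^k}/16\big)$, so the complete sum factors into a product of a sum modulo $16$ and a sum modulo $p^k$.

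For the sum modulo $16$, the substitution $y\mapsto y p^k$ turns it into $\chi(p^k)\,G(m,\chi,16)$. Here I will match conjugation conventions against the definition \eqref{weird-Gauss-sum}.

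For the sum modulo $p^k$, I first replace $x$ by $16x$. This costs a factor $\quartic{16}{\pi^k}=\quartic{2}{\pi}^{4k}=1$, and leaves $\sum_{x \bmod p^k}\quartic{x}{\pi^k}e(xm/p^k)$.

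\emph{Step 3 (identifying $g_4(m,\pi^k)$).} The remaining task is to rewrite this sum over rational residues as the Gaussian sum \eqref{Def-Gauss-sum}. I would use the fact that $\mathrm{Tr}$ is surjective from $\Z[i]$ onto $\Z$ to pick $t\in\Z[i]$ with $\mathrm{Tr}(t\,\overline{\pi}^{2k})\equiv 1 \bmod p^k$. I then parametrise $\alpha \bmod \pi^k$ by $\alpha = x\,t\,\overline{\pi}^k$; this is a bijection, because $\overline{\pi}$ and $t$ are units modulo $\pi^k$. With this parametrisation,
\begin{align*}
\mathrm{Tr}\Big(\frac{\alpha m}{\pi^k}\Big)=\frac{xm\,\mathrm{Tr}(t\overline{\pi}^{2k})}{p^k}\equiv \frac{xm}{p^k} \bmod 1,
\end{align*}
so $e\big(\mathrm{Tr}(\alpha m/\pi^k)\big)=e(xm/p^k)$. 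Also $\quartic{\alpha}{\pi^k}=\quartic{x}{\pi^k}\quartic{t\overline{\pi}^k}{\pi^k}$. Summing over $\alpha$ therefore gives $g_4(m,\pi^k)$ up to the constant factor $\quartic{t\overline{\pi}^k}{\pi^k}^{-1}$.

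The main obstacle is the bookkeeping of this constant. The factor $t$ should be eliminable by a better normalisation; for example, one can choose $t$ to be a fourth power modulo $\pi$, which is possible since $\mathrm{Tr}$ restricted to fourth powers times $\overline{\pi}^{2k}$ still hits all classes modulo $p$, and one then lifts. One must then check that what survives is exactly $\quartic{\overline{\pi}^k}{\pi^k}$, with the correct power and conjugation, consistent with \eqref{rel1}. Equivalently, one can use \eqref{rel1} to move $t\overline{\pi}^k$ into the first argument of $g_4$. I expect this step to require care rather than new ideas.
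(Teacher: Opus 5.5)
Your Steps 1 and 2, and the parametrisation $\alpha = x\,t\,\overline{\pi}^k$ in Step 3, are correct and follow the paper's route: Poisson modulo $16p^k$, the CRT splitting, $\quartic{16}{\pi^k}=1$, and then transporting the rational sum to $g_4(m,\pi^k)$ through $\Z/p^k\Z\cong\Z[i]/(\pi^k)$.

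The gap is that you never determine the constant $\quartic{t\overline{\pi}^k}{\pi^k}^{-1}$. This constant is the only non-formal content of the lemma, and the normalisation you propose for it is impossible in general. Your own condition on $t$ fixes $t$ modulo $\pi^k$. Indeed, $\overline{t}\pi^{2k}\equiv 0 \pmod{\pi^k}$ and $p^k\in\pi^k\Z[i]$, so reducing $\Tr(t\overline{\pi}^{2k})\equiv 1 \pmod{p^k}$ modulo $\pi^k$ gives $t\overline{\pi}^{2k}\equiv 1\pmod{\pi^k}$.

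Hence $t$ is a fourth power modulo $\pi$ only when $\quartic{\overline{\pi}}{\pi}^{2k}=1$. This fails, for example, for $\pi=-1+2i$ and $k=1$, where $\quartic{\overline{\pi}}{\pi}=i$. In general $\quartic{\overline{\pi}}{\pi}^2=\big(\frac{2}{p}\big)$, since $\overline{\pi}\equiv 2\re(\pi)\pmod{\pi}$. For the same reason your supporting claim is false. Modulo $p$, $\Tr(t\overline{\pi}^{2k})$ is congruent to $t\overline{\pi}^{2k}$ modulo $\pi$. So as $t$ ranges over fourth powers, the trace covers only $0$ and a single coset of the fourth powers in $\F_p^\times$, not all classes.

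The same congruence closes the gap at once. It gives $t\overline{\pi}^k\equiv\overline{\pi}^{-k}\pmod{\pi^k}$, so $\quartic{t\overline{\pi}^k}{\pi^k}^{-1}=\quartic{\overline{\pi}^k}{\pi^k}$, which is exactly the factor in the statement. It also justifies the existence of $t$ more honestly than surjectivity of $\Tr$ does. Any $t\equiv\overline{\pi}^{-2k}\pmod{\pi^k}$ works, because then $\Tr(t\overline{\pi}^{2k})-1\in\pi^k\Z[i]\cap\Z=p^k\Z$.

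The paper does the equivalent computation with $\alpha=a\in\Z$ directly. There $\Tr(am/\pi^k)=am(\pi^k+\overline{\pi}^k)/p^k$. The integer $\pi^k+\overline{\pi}^k$ is invertible modulo $p^k$ and congruent to $\overline{\pi}^k$ modulo $\pi^k$, so rescaling $a$ by it gives $g_4(m,\pi^k)=\oquartic{\overline{\pi}^k}{\pi^k}\sum_{a \bmod p^k}\quartic{a}{\pi^k}e(am/p^k)$.
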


\begin{proof} 
	By the Poisson summation formula, we have
	\[\sum_{m=-\infty}^{\infty}f(x+mc) = \frac{1}{c}\sum_{m=-\infty}^{\infty}e\big({\tfrac{m x}{c}}\big)\widehat{f}\big(\tfrac{m}{c}\big),\]
	for continuous integrable complex functions $f$.
	Since $\chi(\cdot) \quartic{\cdot}{\pi^k}$ is a character modulo $16 p^k$, we get using Poisson that
	\begin{align}\label{Poisson_sum_D} \begin{split}
		S_{D,\chi}(\pi, k) &= \sum_{a \bmod 16 p^k} \chi(a) \quartic{a}{\pi^k} \sum_{m \in \Z} w\left( \frac{16 p^km+a}{D} \right)\\
		&= \frac{D}{16 p^k} \sum_{m \in \Z} \sum_{a \bmod 16 p^k} \chi(a) \quartic{a}{\pi^k}
		e \left(\frac{ma}{16 p^k} \right)\widehat{w} \left( \frac{m D}{16 p^k} \right)\\
		&= \frac{D}{16 p^k} \sum_{m \in \Z} \sum_{b \bmod p^k} \sum_{c \bmod 16}\chi(cp^k)\quartic{16 b}{\pi^k}
		e \left(\frac{mb}{p^k} \right) e \left(\frac{mc}{16} \right) \widehat{w} \left( \frac{m D}{16 p^k} \right) \\
		&= \frac{D}{16 p^k} \chi(p^k) \quartic{16}{\pi^k}\;  \sum_{m \in \Z} G\big(m, \quartic{\cdot}{\pi^k}, p^k\big) G(m, \chi, 16) \widehat{w} \left( \frac{m D}{16 p^k} \right),
	\end{split}\end{align}
	where for any Dirichlet character $\omega$ of modulus $N$ over $\Z$,
	\begin{align} \label{weird-Gauss-sum}
		G(m, \omega, N) := \sum_{b \bmod N} \omega(b) e \left( \frac{bm}{N} \right). 
	\end{align}
	Using the ring isomorphism $\Z/p^k\Z \cong \Z[i]/(\pi^k)$ given explicitly by $a \bmod {p^k} \mapsto a \bmod{\pi^k}$, we find that the Gauss sum from~\eqref{Def-Gauss-sum} can be re-written as (cf. \cite[(2.11)]{Diaconu})
	\begin{align}\label{g4_and_G1}\begin{split}
		g_4(m, \pi^k) &:= \sum_{\alpha \bmod \pi^k} \quartic{\alpha}{\pi^k} e \left(\Tr \left( \frac{\alpha m}{\pi^k} \right) \right) 
		= \sum_{a \bmod p^k} \quartic{a}{\pi^k} e \left(  \frac{am}{p^k} ({\pi}^k +  \overline{\pi}^k)  \right) \\
		&= \oquartic{\overline{\pi}^k}{\pi^k} \sum_{a \bmod p^k} \quartic{a}{\pi^k} e \left(  \frac{am}{p^k} \right) =  \oquartic{\overline{\pi}^k}{\pi^k} G\big(m,\quartic{\cdot}{\pi^k}, p^k \big).
	\end{split}\end{align}
	Replacing \eqref{g4_and_G1} in \eqref{Poisson_sum_D}, the result follows.
\end{proof}

We conclude this section by offering alternative expressions for $\quartic{\overline{\pi}^k}{\pi^k}$ and $\chi(p^k)$ to include their contribution in the quartic symbol.

\begin{lemma} \label{weird-character} 
	Let $\alpha \in \Z[i]$ be primary.  Then
	\begin{align*}
		\quartic{ \overline{\alpha}}{\alpha} &=
		\begin{cases}
			\quartic{2i}{\alpha} & \textnormal{ if } \alpha \equiv 1 \bmod{4}\\ 
			i\quartic{2i}{\alpha} & \textnormal{ if }  \alpha \equiv 1 + \lambda^3 \bmod{4}.
		\end{cases}
	\end{align*}
\end{lemma}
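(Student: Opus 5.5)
We treat the case $(\alpha,\overline{\alpha})=1$, which covers the primary primes $\pi$ with $\N(\pi)=p\equiv 1 \bmod 4$ used in Lemma~\ref{Poisson}. Some such hypothesis is needed: for $\alpha=-3$ one has $\overline{\alpha}=\alpha$, so the left side is $0$ while $\quartic{2i}{\alpha}\neq 0$.

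Write the primary $\alpha$ as $\alpha=a+bi$ with $a$ odd and $b$ even; this matches the notation $a+2b'i$ of \eqref{Eq suppl quartic law} with $b=2b'$. Since $\overline{\alpha}=2a-\alpha\equiv 2a \bmod \alpha$, we get
\[
\quartic{\overline{\alpha}}{\alpha}=\quartic{2}{\alpha}\quartic{a}{\alpha}.
\]
Since $\quartic{2i}{\alpha}=\quartic{2}{\alpha}\quartic{i}{\alpha}$, the lemma reduces to computing $\quartic{a}{\alpha}$ and comparing it with $\quartic{i}{\alpha}=i^{(1-a)/2}$.

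To compute $\quartic{a}{\alpha}$, let $a^{*}=\pm a$ with $a^{*}\equiv 1 \bmod 4$; as an element of $\Z[i]$ it is primary. Write
\[
\quartic{a}{\alpha}=\quartic{\pm1}{\alpha}\quartic{a^{*}}{\alpha}, \qquad \quartic{-1}{\alpha}=\quartic{i}{\alpha}^2 .
\]
Coprimality of $a$ and $\alpha$ follows from $(\alpha,\overline{\alpha})=1$, because any common prime factor of $a$ and $\alpha$ divides $2a-\alpha=\overline{\alpha}$. Hence quartic reciprocity \eqref{bilaw} applies and gives
\[
\quartic{a^{*}}{\alpha}=(-1)^{C(a^{*},\alpha)}\quartic{\alpha}{a^{*}}, \qquad C(a^{*},\alpha)=\frac{a^2-1}{4}\cdot\frac{\N(\alpha)-1}{4}.
\]
Next, $\alpha\equiv bi \bmod a^{*}$. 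Write $b=2^{j}b_1$ with $b_1$ odd; then $b_1$ is coprime to $a$, since otherwise $a$ and $b$ share an odd prime, which divides both $\alpha$ and $\overline{\alpha}$. The symbols $\quartic{2}{a^{*}}$, $\quartic{b_1}{a^{*}}$ and $\quartic{-1}{a^*}$ are all equal to $1$ by \eqref{chi_at_integers}, applied with the positive odd modulus $|a|$; the symbol depends only on the ideal $(a^*)=(|a|)$. Therefore $\quartic{\alpha}{a^{*}}=\quartic{i}{a^{*}}=i^{(1-a^{*})/2}=1$, since $a^*\equiv 1 \bmod 4$. So everything collapses to the sign $(-1)^{C(a^*,\alpha)}$ together with the factor $\quartic{i}{\alpha}^2$ when $a\equiv 3 \bmod 4$.

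The last step is to compare $\quartic{a}{\alpha}$ with $\quartic{i}{\alpha}=i^{(1-a)/2}$, which depends only on $a \bmod 8$ once we use $\N(\alpha)=a^2+b^2$. Note that $\alpha\equiv 1 \bmod 4$ means $a\equiv 1 \bmod 4$ and $4\mid b$, while $\alpha\equiv 1+\lambda^3\bmod 4$ means $a\equiv 3\bmod 4$ and $b\equiv 2 \bmod 4$.

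In the first case $a^{*}=a$, and $(a^2-1)/4$ is even exactly when $a\equiv 1 \bmod 8$, while $(\N(\alpha)-1)/4\equiv (a^2-1)/4+b^2/4$. A short check over $a\equiv 1,5 \bmod 8$ should give $(-1)^{C}=i^{(1-a)/2}$, hence $\quartic{a}{\alpha}=\quartic{i}{\alpha}$.

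In the second case $a^{*}=-a$, and we must also carry the factor $\quartic{-1}{\alpha}=\quartic{i}{\alpha}^{2}=-1$, valid since $a\equiv 3 \bmod 4$ gives $\quartic{i}{\alpha}=\pm i$. The same bookkeeping over $a\equiv 3,7 \bmod 8$ should produce the extra factor $i$, i.e.\ $\quartic{a}{\alpha}=i\,\quartic{i}{\alpha}$.

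The only real obstacle is this final bookkeeping: tracking the parity of $C(a^{*},\alpha)$ modulo~$2$ against $i^{(1-a)/2}$ in the four classes of $a \bmod 8$, and checking that the $b$-dependence cancels. I would tabulate these four classes explicitly and verify the result on a small example, such as $\alpha=-1+2i$ or $\alpha=1+4i$.
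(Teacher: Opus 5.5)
\textbf{There is a genuine error in your key evaluation step, so the deferred bookkeeping would not close.} Your overall route is the same as the paper's: use $\overline{\alpha}\equiv 2a \bmod \alpha$ to reduce to $\quartic{2}{\alpha}\quartic{a}{\alpha}$, flip $\quartic{\pm a}{\alpha}$ by \eqref{bilaw}, use $\alpha\equiv bi \bmod a$, remove the rational part with \eqref{chi_at_integers}, and finish with the supplementary law. Your remark that a coprimality hypothesis such as $(\alpha,\overline{\alpha})=1$ is needed (e.g.\ $\alpha=-3$) is correct. The paper's proof uses it implicitly when applying \eqref{bilaw}, and it holds for the $\pi^k$ to which the lemma is applied.

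\textbf{What goes wrong.}
\begin{itemize}
\item You assert $\quartic{i}{a^{*}}=i^{(1-a^{*})/2}=1$ because $a^{*}\equiv 1 \bmod 4$. But $a^*\equiv 1 \bmod 4$ only makes $(1-a^*)/2$ even, so $i^{(1-a^*)/2}=(-1)^{(a^*-1)/4}$. This equals $-1$ when $a^*\equiv 5 \bmod 8$.
\item Conversely, the reciprocity sign is never nontrivial here. Since $a$ is odd, $a^2\equiv 1 \bmod 8$, so $(a^2-1)/4$ is always even and $(-1)^{C(a^*,\alpha)}=1$. Your claim that $(a^2-1)/4$ is even exactly when $a\equiv 1 \bmod 8$ is false: $a=5$ gives $6$.
\item Carrying out your tabulation honestly would therefore give $\quartic{a}{\alpha}=1$ for every $a\equiv 1 \bmod 4$. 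This contradicts $\quartic{i}{\alpha}=i^{(1-a)/2}=-1$ when $a\equiv 5\bmod 8$.
\item A concrete check: for $\alpha=5+4i$ (norm $41$, $i\equiv 9$), one has $\quartic{5}{\alpha}\equiv 5^{10}\equiv -1 \bmod \alpha$. Your version predicts $+1$.
\end{itemize}

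\textbf{The fix.} The needed sign comes from $\quartic{i}{a^*}$, not from reciprocity. You get
\[
\quartic{a^*}{\alpha}=\quartic{\alpha}{a^*}=\quartic{i}{a^*}=i^{(1-a^*)/2}.
\]
\begin{itemize}
\item In the case $\alpha\equiv 1 \bmod 4$ ($a^*=a$), this is exactly $i^{(1-a)/2}=\quartic{i}{\alpha}$. The supplementary law only sees the real part $a$, so there is no $b$-dependence to cancel.
\item In the case $\alpha\equiv 1+\lambda^3 \bmod 4$ ($a^*=-a$), using $\quartic{-1}{\alpha}=-1$ you get
\[
\quartic{a}{\alpha}=-\,i^{(1+a)/2}=i^{(3-a)/2}\, i^{1+a}=i\quartic{i}{\alpha},
\]
since $4\mid 1+a$.
\end{itemize}
With this correction your argument coincides with the paper's proof.
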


\begin{proof}
	To begin, observe that since $\alpha = a + 2bi$ is primary, we have the following 	correspondences between congruence classes of $\alpha, a$, and $b$:
	\begin{center}\begin{tabular}{c|c|c}
		$\alpha \bmod{4}$ & $a \bmod{4}$ & $b \bmod{2}$\\ \hline 
		$1$ & $1$ & $0$\\
		$1 + \lambda^3$ & $3$ & $1$
	\end{tabular}\end{center}
	Note that 
	\begin{equation*}
		\quartic{ \overline{\alpha}}{\alpha} = \quartic{ \overline{\alpha}+\alpha}{\alpha} = 
		\quartic{2}{\alpha}\quartic{a}{a+2bi}.
	\end{equation*}
	When $a \equiv 1 \bmod{4}$, using \eqref{bilaw}, \eqref{chi_at_integers}, and the first supplementary quartic reciprocity law from~\eqref{Eq suppl quartic law}, we write
	\begin{equation*}
		\quartic{a}{a+2bi} = \quartic{a+2bi}{a}=\quartic{i}{a}\quartic{2b}{a} = \quartic{i}{\alpha}.
	\end{equation*}
	Similarly, when $a \equiv 3 \bmod{4}$, we have
	\begin{equation*}
		\quartic{a}{a+2bi}= \quartic{-1}{a+2bi}\quartic{-a}{a+2bi} = i^{1-a}\quartic{a+2bi}{-a}= - \quartic{i}{-a} = -i^{\frac{1+a}{2}} = - \quartic{i}{\alpha}i^a.
	\end{equation*}
	The claim follows.
\end{proof}

\begin{lemma} \label{chi-value}
	Let $\chi$ be a Dirichlet character modulo $16$. Then, there exists $\ell = \ell(\chi) \in \lbrace0,1,2,3\rbrace$ such that
	$$\chi : p \mapsto  \quartic{i}{\pi}^\ell, \text{ whenever } p =\pi\overline{\pi}.$$
\end{lemma}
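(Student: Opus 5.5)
The plan is to show that $\chi(p)$ is a function of $\pi \bmod 16$, or better of $\pi \bmod 4$ together with $\pi \bmod 16$, which can be matched with powers of $\quartic{i}{\pi}$. By the first supplementary law in \eqref{Eq suppl quartic law}, writing the primary $\pi = a+2bi$ gives $\quartic{i}{\pi} = i^{\frac{1-a}{2}}$. Since $p = a^2+4b^2$, everything reduces to elementary congruences. The group $(\Z/16\Z)^\times \cong \{\pm1\}\times \langle 5\rangle$ has character group generated by $\chi_4$ (order $2$) and a character $\psi$ of order $4$ with $\psi(5)=i$, $\psi(-1)=1$. It therefore suffices to prove the statement for these two generators. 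Indeed, if $\chi_4(p)=\quartic{i}{\pi}^{\ell_1}$ and $\psi(p)=\quartic{i}{\pi}^{\ell_2}$, then multiplicativity gives the claim for every $\chi=\chi_4^{e_1}\psi^{e_2}$ with $\ell=e_1\ell_1+e_2\ell_2 \bmod 4$.

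First step: for $p\equiv 1 \bmod 4$ we have $\chi_4(p)=1$, so $\ell=0$ works. It remains to evaluate $\psi(p)$, which is determined by whether $p\equiv 1,5,9,13 \bmod 16$. We compute $p=a^2+4b^2 \bmod 16$ with $a$ odd.

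We have $a^2\equiv 1$ or $9 \bmod 16$ according as $a\equiv \pm1$ or $\pm 3 \bmod 8$. Also $4b^2\equiv 0$ or $4 \bmod 16$ according as $b$ is even or odd. The parity of $b$ is tied to $a \bmod 4$ by the table in the proof of Lemma~\ref{weird-character}: $b$ is even iff $a\equiv 1 \bmod 4$.

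This gives four cases:
\begin{itemize}
\item[(i)] $a\equiv 1 \bmod 8$: $p\equiv 1$ and $\quartic{i}{\pi}=i^{(1-a)/2}=1$.
\item[(ii)] $a\equiv 5 \bmod 8$: $p\equiv 9$ and $\quartic{i}{\pi}=i^{-2}=-1$.
\item[(iii)] $a\equiv 3 \bmod 8$: $p\equiv 9+4=13$ and $\quartic{i}{\pi}=i^{-1}=-i$.
\item[(iv)] $a\equiv 7 \bmod 8$: $p\equiv 1+4=5$ and $\quartic{i}{\pi}=i^{-3}=i$.
\end{itemize}
Hence $p \bmod 16 = 5^{j}$ exactly when $\quartic{i}{\pi}=i^{j}$, since $5^0=1$, $5^1=5$, $5^2=9$, $5^3=13$. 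It follows that $\psi(p)=\psi(5)^j=i^j=\quartic{i}{\pi}$, so $\ell=1$ for $\psi$, and in general $\psi^{e}(p)=\quartic{i}{\pi}^{e}$.

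Finally, the value is independent of the choice of $\pi$ versus $\overline\pi$. Both are primary, and $\quartic{i}{\overline\pi}$ depends only on the real part $a$, which is the same for $\pi$ and $\overline{\pi}$.

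The only delicate point, which I expect to be the main obstacle, is keeping the correlation between $b \bmod 2$ and $a \bmod 4$ straight for primary elements, together with the sign conventions in the supplementary law. Once the four-case table is checked, the rest is bookkeeping with the generators of the character group mod $16$.
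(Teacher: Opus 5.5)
Your proof is correct and follows the same route as the paper. You split the character group modulo $16$ into the order-$2$ generator $\chi_4=\left(\frac{-1}{\cdot}\right)_2$, which is trivial on $p\equiv 1 \bmod 4$, and an order-$4$ generator (your $\psi$ is exactly the paper's $\quartic{1+i}{\cdot}$), then identify the latter with $\quartic{i}{\pi}$ on split primes. The only difference is that you verify $\psi(p)=\quartic{i}{\pi}$ by an explicit mod-$16$ case check instead of citing Lemmermeyer. That check could be shortened: \eqref{quartic_symbol} gives $\quartic{i}{\pi}=i^{(p-1)/4}$ directly, and $(5^j-1)/4\equiv j \bmod 4$.
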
	

\begin{proof}
	Note that the group of Dirichlet characters modulus 16 on $\Z$ 	is isomorphic to $\Z/2\Z \times \Z/4\Z$ and generated by  $\quartic{1+i}{\cdot}$ and $\left(\tfrac{-1}{\cdot}\right)_{2}$. Thus for every $\chi \in \widehat{(\Z/16\Z)^{\times}}$, there exists $\ell \in \{0,1,2,3\}$ and $\ell' \in \{ 0, 1 \}$ such that
	\[\chi(\cdot) = \chi_{\ell, \ell'}(\cdot) = \left(\frac{-1}{\cdot}\right)_{2}^{\ell'}\quartic{1+i}{\cdot}^{\ell}.\]
	As in \cite[p. 195]{Lemmermeyer}, we further note that when $p = \pi \overline{\pi}$ splits, then 
	\begin{align*}
		\quartic{1+i}{p} =\quartic{i}{\pi} \;\mbox{and}\;  \left(\frac{-1}{p}\right)_{2} = 1, 
	\end{align*}
	from which the lemma follows.
\end{proof}

Using lemmas \ref{Poisson}, \ref{weird-character} and \ref{chi-value} in \eqref{d_character_sum},  we get
\begin{align*}
	\sum_{d \equiv \alpha \bmod16} \quartic{d}{\pi}^k   w \left( \frac{d M}{D} \right) = \frac{ c(\pi^k)  D}{2^7 p^k M} 
	\sum_{\chi \bmod 16} \chi(\alpha^{-1})
	\sum_{\substack{m \in \Z}}  g_4 (8 i^{3(\ell(\chi)+1)}  m, \pi^k)  G(m, \chi, 16) 
	\widehat{w} \left( \frac{m D}{16 p^k M} \right),
\end{align*}
where $c(\pi^k):=1$ if $\pi^k \equiv 1 \bmod 4$ and $c(\pi^k) := i$ if $\pi^k \equiv 1 + \lambda^3 \bmod 4$.
Replacing in Remark~\ref{Remark TO Prove 2}, the proof of Theorem \ref{main-theorem OLD} reduces to the following.

\begin{remark}\label{Proposition_reduction}
	Let $w$ satisfy the conditions in \eqref{w-conditions}, and let $\phi$ be an even Schwartz function such that the support of $\widehat{\phi}$ is contained in $(- \nu, \nu)$. Fix $\beta \in \lbrace 1, 1 + \lambda^3\rbrace$ and let 
	\begin{align*} 
		S_{\text{split}}(D, M) := \sum_{\substack{m \in \Z}}   \bigg|  \sum_{\substack{\pi \text{primary} \\ \N(\pi) = p \equiv 1 \bmod 4 \\ (\pi, M)=1 \\ k \geq 1 \\ \pi^k \equiv \beta \bmod 4}}  \frac{g_4\big(8 i^{3(\ell+1)} M^3 m, \pi^k \big)}{\N(\pi^k)^{\frac{3}{2}}}   \log \N(\pi) \frac{\overline{\pi^k}}{|\pi^k|}  \,
		\widehat{\phi} \left( \frac{  k \log{\N(\pi)}}{2\log{D}} \right)  \widehat{w} \left( \frac{m D}{16 \N(\pi)^k M} \right) \bigg|
	\end{align*}
	Then, to prove
	Theorem \ref{main-theorem OLD}, it suffices to show that there exists $\delta >0$ such that
	\begin{align}\label{reduction before Vaughan}
		S_{\text{split}}(D, M) 
		\ll \frac{M^{3}}{D^{\delta}}
	\end{align}
	for any $\beta \in \{ 1, 1 + \lambda^3 \}$, $\ell \in \{0,1,2,3\}$, uniformly for all $\varepsilon>0$ and $M < D^{\varepsilon}$. 
 \end{remark}
 
\begin{proof}[Proof of Theorem~\ref{main-theorem OLD} assuming Conjecture \ref{conj-DDHL}]
	We want to show that Conjecture \ref{conj-DDHL} for $\ell=1$ implies the bound~\eqref{reduction before Vaughan} when $\nu < 1$. We first show that
	\begin{align}\nonumber
		S_{\text{inert}}(D, M) &:= \sum_{m \in \Z} \bigg| \sum_{\substack{ \pi  \text{ primary} \\ \N(\pi) = p^2 \\ (\pi, M)=1 \\ k \geq 1 \\ \pi^k \equiv \beta \bmod 4}}  
		\frac{g_4(8 i^{3(\ell+1)} M^3 m, \pi^k)}{\N(\pi^k)^\frac32} \log{\N(\pi)} 
		\frac{\overline{\pi^k}}{|\pi^k|} \widehat{\phi} \left( \frac{  k \log{\N(\pi)}}{2\log{D}} \right)  \widehat{w} \left( \frac{m D}{16 \N(\pi)^k M} \right)   \bigg| \\
		&\ll M D^{\nu -1} \log{D},\label{inert-primes} 
	\end{align}
	which satisfies the bound of Remark \ref{Proposition_reduction} for $\nu < 1$. 
	Since $\widehat{\phi}$ is supported on $(-\nu, \nu)$ and $\widehat{w}$ is supported on $(-\eta, \eta)$, we indeed have
	\begin{align*}
		S_{\text{inert}}(D, M)   \ll  \sum_{\substack{|m| \ll \frac{p^{2k} M}{D} \\ p^{2k} \leq D^{2 \nu} \\ 1 \leq k \leq \nu \log{D} }}   \frac{|g_4(8 i^{3(\ell+1)} M^3 m, \pi^k)|}{\N(\pi^k)^\frac32} \log{\N(\pi)}.
	\end{align*}
	We use the identities~\eqref{rel4}. 
	When $k \not\equiv 0 \bmod 4$, we have  $|g_4(8 i^{3(\ell+1)} M^3 m, \pi^k)| \leq \N(\pi)^{k-\frac12}$ if $p^{k-1} \parallel m$, and it is $0$ otherwise. We get 
	\begin{align*} 
		\sum_{\substack{|m| \ll \frac{p^{2k} M}{D} \\ p^{2k} \leq D^{2 \nu} \\ 1 \leq k \leq \nu \log{D} \\ k \not\equiv 0 \bmod 4}}  
		\frac{|g_4(8 i^{3(\ell+1)} M^3 m, \pi^k)|}{\N(\pi^k)^\frac32} \log{\N(\pi)} &\ll
		\sum_{\substack{|m| \ll \frac{p^{2k} M}{D} \\ p^{2k} \leq D^{2 \nu} \\ p^{k-1} \parallel m \\ 1 \leq k \leq \nu \log{D} \\ k \not\equiv 0 \bmod 4 }} 
		\N(\pi)^{-\frac{k}{2} - \frac{1}{2}} \log p \\
		&\ll \frac{M}{D} \sum_{\substack{p \leq D^{\frac{\nu}{k}} \\ 1 \leq k \leq \nu \log{D}}} \log p \ll M D^{\nu-1} \log{D}.
	\end{align*}
	When $k \equiv 0 \bmod 4$, we have the same term as above coming from $p^{k-1}\parallel m$ and supplementary terms for $p^k\mid m$, this yields
	\begin{align*} 
		&\sum_{\substack{|m| \ll \frac{p^{2k} M}{D} \\ p^{2k} \leq D^{2 \nu} \\ 1 \leq k \leq \nu \log{D} \\ k \equiv 0 \bmod 4}}  
		\frac{|g_4(8 i^{3(\ell+1)} M^3 m, \pi^k)|}{\N(\pi^k)^\frac32} \log{\N(\pi)} \\
		&\ll  \sum_{\substack{1 \leq k \leq \nu \log{D} \\ k \equiv 0 \bmod 4}}  \sum_{\substack{|m| \ll \frac{p^{2k} M}{D} \\ p^k \mid m \\ p^{2k} \leq D^{2 \nu} }}  
		\frac{\varphi(\pi^k)}{\N(\pi)^{3k/2}} + O \left(   M D^{\nu-1} \log{D}   \right)\\
		&\ll  \sum_{\substack{1 \leq k \leq \nu \log{D} \\ k \equiv 0 \bmod 4}}   \sum_{\substack{|m| \ll \frac{p^{k} M}{D} \\ p\leq D^{\nu/k} }}  
		p^{-k} + O \left(   M D^{\nu-1} \log{D}   \right) =  O \left(   M D^{\nu-1} \log{D}   \right).
	\end{align*}
	This proves \eqref{inert-primes}, and by Remark \ref{Proposition_reduction}, it suffices to show that Conjecture \ref{conj-DDHL} implies that
	\begin{align} \label{all-primes}
		\sum_{m \in \Z} \bigg| \sum_{\substack{ c \in \Z[i] \\ c \text{ primary} \\c \equiv \beta \bmod 4}} \frac{g_4(8 i^{3(\ell+1)} M^3 m, c)}{\N(c)^\frac32} \Lambda(c)
		\frac{\overline{c}}{|c|} 
		\widehat{\phi} \left( \frac{\log{\N(c)}}{2\log{D}} \right)  \widehat{w} \left( \frac{m D}{16 \N(c) M} \right) \bigg| \ll \frac{M^3}{D^\delta}
	\end{align}
	when $\nu < 1$.
	By partial summation and Conjecture \ref{conj-DDHL}, the sum over $c \in \Z[i]$ in \eqref{all-primes} is bounded by
	\begin{align*}
		& \int_{\frac{\lvert m\rvert D}{16\eta M}}^{D^{2 \nu}}  \Big( \sum_{\substack{1 \leq \N(c) \leq t \\ c \text{ primary } \\ c \equiv \beta \bmod 4}} 
		\frac{g_4(8 i^{3(\ell+1)} M^3 m, c)}{\N(c)^\frac12} \Lambda(c)
		\frac{\overline{c}}{|c|} \Big) \;
		\frac{\dd}{\dd t} \left( \frac{1}{t}  \widehat{\phi} \left( \frac{\log{t}}{2\log{D}} \right)  \widehat{w} \left( \frac{m D}{16 t M} \right) \right) \dd t \\
		&\ll  \N(M^3 m)^\varepsilon  \left(  \int_{\frac{\lvert m\rvert D}{16\eta M}}^{D^{2 \nu}}  \frac{t^{\frac12 + \varepsilon}}{t^2} \dd t +
		\frac{\lvert m\rvert D}{M} \int_{\frac{\lvert m\rvert D}{16\eta M}}^{D^{2 \nu}}  \frac{t^{\frac12 + \varepsilon}}{t^3} \dd t \right) \\
		&\ll \N(M^3 m)^\varepsilon  \Big( \frac{\lvert m\rvert D}{M} \Big)^{-\frac12 + \varepsilon} .
	\end{align*}
	Summing over $|m| \ll MD^{2\nu-1}$, we get the bound $M^{1+\varepsilon} D^{\nu-1+\varepsilon}$, which is smaller than the bound of Remark \ref{Proposition_reduction} for $\nu < 1$. This completes the proof of Theorem~\ref{main-theorem OLD} assuming Conjecture~\ref{conj-DDHL}.
\end{proof}

In the next sections, we prove Theorem~\ref{main-theorem OLD} without assuming Conjecture ~\ref{conj-DDHL} for $\nu < \frac35$.  We will need to bound the contribution of the prime powers that are not coprime to $m$ in $S_{\mathrm{split}}(D,M)$, which is done in the next lemma.

\begin{lemma}\label{Lem bound non-coprimality}
	Let $w$ satisfy the conditions in \eqref{w-conditions}, and let $\phi$ be an even Schwartz function such that the support of $\widehat{\phi}$ is contained in $(- \nu, \nu)$.
	For any $\beta \in \lbrace 1, 1 + \lambda^3\rbrace$, we have
	\begin{align*}
		\sum_{\substack{m \in \Z}}   \bigg|  \sum_{\substack{\pi \text{primary} \\ \N(\pi) = p \equiv 1 \bmod 4 \\ (\pi, M)=1 \\ k \geq 1 \\ \pi^k \equiv \beta \bmod 4 \\ (\pi,m)\neq 1}}  \frac{g_4\big(8 i^{3(\ell+1)} M^3 m, \pi^k \big)}{\N(\pi^k)^{\frac{3}{2}}}   \log \N(\pi) \frac{\overline{\pi^k}}{|\pi^k|}  \,
		\widehat{\phi} \left( \frac{  k \log{\N(\pi)}}{2\log{D}} \right)  \widehat{w} \left( \frac{m D}{16 \N(\pi)^k M} \right) \bigg| \ll M D^{\frac\nu2 -1} \log D.
	\end{align*}
\end{lemma}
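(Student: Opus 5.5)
We bound the sum trivially, using the triangle inequality to bring the absolute value inside the $\pi$-sum, and then use the explicit evaluation \eqref{rel4} of $g_4(\pi^n,\pi^k)$. Since $\pi$ is primary of norm $p\equiv 1 \bmod 4$ and $(\pi, 2M)=1$, the twisted multiplicativity \eqref{rel1} lets us strip off the unit, the factor $8$ and $M^3$, since all are coprime to $\pi$. Hence $|g_4(8 i^{3(\ell+1)} M^3 m, \pi^k)| = |g_4(\pi^{n}, \pi^k)|$, where $\pi^n \parallel m$; note $n\ge 1$ by the condition $(\pi,m)\neq 1$. The support conditions give $p^k \le D^{2\nu}$ from $\widehat\phi$ and $|m| \ll p^k M/D$ from $\widehat w$. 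In particular $m\neq 0$ is forced only if $p^k \gg D/M$; the term $m=0$ needs separate comment, since $g_4(0,\pi^k)$ vanishes unless $k\equiv 0 \bmod 4$. In that case it equals $\varphi(\pi^k)$ and contributes $\sum_{p^k\le D^{2\nu}} p^{-k/2}\log p$ with $k\ge 4$, which is $O(\log D)$. Hmm, this is not small, but $\widehat w(0)=\int w$ is not zero, so the $m=0$ term must be treated separately. I expect the $m=0$ term is not intended here: in the intended application $m=0$ is handled elsewhere, or the condition $(\pi,m)\ne1$ is read for $m\neq0$. I will restrict to $m \neq 0$, and flag $m=0$ as the point to double-check.

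For $m\ne 0$, by \eqref{rel4} the Gauss sum is nonzero only when $n=k-1$, where it has size at most $p^{k-1/2}$, or when $n\ge k$ with $k\equiv 0\bmod 4$, where it has size $\varphi(\pi^k)\le p^k$. Note that $\pi\mid m$ with $m\in\Z$ forces $p\mid m$, so $\pi^n\parallel m$ corresponds to $p^n \parallel m$.

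\emph{Case $n=k-1\ge1$ (so $k\ge2$).} The term is at most $p^{-k/2-1/2}\log p$, and the number of $m$ with $|m|\ll p^kM/D$ and $p^{k-1}\mid m$ is $\ll pM/D$ (nonzero only if $pM/D\gg 1$). Summing gives
$$\frac{M}{D}\sum_{k\ge2}\sum_{p\le D^{\nu/k}} p^{\frac{1-k}{2}}\log p.$$
For $k=2$ this is $\frac MD\sum_{p\le D^{\nu/2}}p^{-1/2}\log p\ll M D^{\nu/4-1}$; larger $k$ contribute less, up to the $\log D$ from summing over $k\le \nu\log D$.

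\emph{Case $n\ge k$, $k\equiv0\bmod 4$.} The term is at most $p^{-k/2}\log p$, and the number of $m$ is $\ll M/D$ plus an extra count. This case is where the stated exponent $\nu/2$ arises: one needs $|m|\ge p^k$ together with $|m|\ll p^kM/D$, which forces $M\gg D$, impossible since $M<D^\varepsilon$. So this case is empty for large $D$.

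It remains to recover $\frac\nu2$. In the case $k=1$ there is no contribution ($n=0$ is excluded). So the dominant term is $k=2$, bounded by $\frac MD\sum_{p\le D^{\nu}}$ with at most one valid $m$-multiple per scale. A cruder count, bounding the number of $m$ by $1+pM/D$ and using $p\le D^{\nu/2}$, gives $\frac MD \cdot D^{\nu/2}\log D$, which matches the claimed bound $MD^{\nu/2-1}\log D$. The main obstacle is bookkeeping: making the divisibility count $\#\{m: p^{k-1}\parallel m,\ |m|\ll p^kM/D\}$ precise, noting that it vanishes unless $pM\gg D$, and handling the $m=0$ term consistently.
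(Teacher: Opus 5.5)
Your route is the paper's: the triangle inequality, \eqref{rel1} to reduce to $|g_4(\pi^n,\pi^k)|$ with $\pi^n\parallel m$, and the case split from \eqref{rel4} ($n=k-1$, or $n\ge k$ with $4\mid k$). You then count the $m$ with $|m|\ll p^kM/D$ divisible by $p^{k-1}$, and note that the case $n\ge k$ contributes nothing for $m\neq0$.

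\textbf{The range error.} The step that is supposed to produce the exponent is wrong. The support of $\widehat\phi$ gives $p^k<D^{2\nu}$, i.e.\ $p<D^{2\nu/k}$, as you wrote at the start, not $p\le D^{\nu/k}$. With the correct range,
\[
\frac{M}{D}\sum_{k\ge 2}\ \sum_{p<D^{2\nu/k}} p^{\frac{1-k}{2}}\log p \ll \frac{M}{D}\Big(\sum_{p<D^{\nu}}\frac{\log p}{p^{1/2}}+\log D+1\Big)\ll MD^{\frac{\nu}{2}-1}\log D .
\]
Here $k=2$ is exactly the source of $\frac{\nu}{2}$, $k=3$ gives the $\log D$, and $k\ge4$ is dominated by $\sum_p p^{-3/2}\log p$. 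Your $MD^{\nu/4-1}$ is an artifact of the halved range and is not justified. Your last paragraph does not repair it: it starts from the correct range $p\le D^{\nu}$, then switches to $p\le D^{\nu/2}$ while still claiming $D^{\nu/2}$. Moreover, a count $1+pM/D$ summed over all $p<D^{\nu}$ would leave a term $\sum_p p^{-3/2}\log p\asymp 1$, so the $+1$ has to go. The count you need is that there are at most $32\eta\,pM/D$ nonzero multiples of $p^{k-1}$ in $|m|\le 16\eta p^kM/D$, and none if $pM/D$ is small.

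\textbf{The term $m=0$.} Here you are right, and the paper's proof has the same blind spot. Since $(\pi,0)=\pi\neq1$, the statement includes $m=0$, where the Gauss sum is $g_4(0,\pi^k)$. This equals $\varphi(\pi^k)$ if $4\mid k$ and $0$ otherwise.
\begin{itemize}
\item That term is $\widehat w(0)$ times the modulus of a sum over $4\mid k$ dominated by $\sum_p p^{-2}\log p$. So it is $O(1)$, not merely $O(\log D)$.
\item For $\beta=1+\lambda^3$ it vanishes, because $\pi^k\equiv1\bmod 4$ for primary $\pi$ and even $k$.
\item For $\beta=1$ it tends to $\widehat w(0)\,|\widehat\phi(0)|\,|S_M|$ for a fixed absolutely convergent sum $S_M$. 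This is not $o(1)$ in general, hence not $\ll MD^{\nu/2-1}\log D$ when $\nu<2$.
\end{itemize}
The paper's bound for $\sum_{|m|\ll p^kM/D,\ p^k\mid m}$ in the case $4\mid k$ is valid only after discarding $m=0$. So the lemma must be read with $m\neq0$, as you propose. The discarded term is the Poisson main term of the principal character $\quartic{\cdot}{\pi}^k$ with $4\mid k$. It is most simply removed before Poisson summation, by bounding the $k\equiv0\bmod 4$ part of \eqref{TO-PROVE-3} trivially by $\ll D\sum_p p^{-2}\log p\ll D$, which is all that is required there.

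With the range corrected and $m\neq0$, your argument coincides with the paper's.
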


\begin{proof}
	Similarly to the previous proof, it suffices to bound the triple sum
	\begin{align*}
		\sum_{1\leq k \leq \nu\log D} \sum_{\substack{p^k \leq D^{2\nu} \\ p \equiv 1 \bmod 4 \\ (p,M)=1}}	\sum_{\substack{ \lvert m\vert  \ll \frac{p^k M}{D}  \\ (\pi,m)\neq 1}}     \frac{\lvert g_4\big(8 i^{3(\ell+1)} M^3 m, \pi^k \big)\rvert }{ p^{\frac{3}{2}k}}   \log p 
	\end{align*}
	When $k \not\equiv 0 \bmod 4$, we have by \eqref{rel4} that  $|g_4(8 i^{3(\ell+1)} M^3 m, \pi^k)| \leq \N(\pi)^{k-\frac12}$ if $p^{k-1} \parallel m$, and it is $0$ otherwise.
	We get
	\begin{align*}
		\sum_{\substack{1\leq k \leq \nu\log D \\ k \not\equiv 0 \bmod 4 }} \sum_{\substack{p^k \leq D^{2\nu} \\ p \equiv 1 \bmod 4 \\ (p,M)=1}}	\sum_{\substack{ \lvert m\vert  \ll \frac{p^k M}{D}  \\ (\pi,m)\neq 1}}     \frac{\lvert g_4\big(8 i^{3(\ell+1)} M^3 m, \pi^k \big)\rvert }{ p^{\frac{3}{2}k}}   \log p 
		&\leq 	\sum_{\substack{2\leq k \leq \nu\log D \\ k \not\equiv 0 \bmod 4 }} \sum_{\substack{p^k \leq D^{2\nu} \\ p \equiv 1 \bmod 4 \\ (p,M)=1}}	\sum_{\substack{ \lvert m\vert  \ll \frac{p^k M}{D}  \\ p^{k-1} \parallel m}}     p^{-\frac{k+1}{2}}   \log p \\
		&\leq \frac{M}{D} \sum_{\substack{2\leq k \leq \nu\log D \\ k \not\equiv 0 \bmod 4 }} \sum_{\substack{p^k \leq D^{2\nu} \\ p \equiv 1 \bmod 4 \\ (p,M)=1}} p^{-\frac{k-1}{2}}   \log p \\
		&\ll M D^{\frac{\nu}{2} -1} \log D.
	\end{align*}
	When $k \equiv 0 \bmod 4$, we must also account for the contribution from the previous to last line of \eqref{rel4}, and
	\begin{multline*}
		\sum_{\substack{1\leq k \leq \nu\log D \\ k \equiv 0 \bmod 4 }} \sum_{\substack{p^k \leq D^{2\nu} \\ p \equiv 1 \bmod 4 \\ (p,M)=1}}	\sum_{\substack{ \lvert m\vert  \ll \frac{p^k M}{D}  \\ (\pi,m)\neq 1}}     \frac{\lvert g_4\big(8 i^{3(\ell+1)} M^3 m, \pi^k \big)\rvert }{ p^{\frac{3}{2}k}}   \log p \\
		\ll \sum_{\substack{1\leq k \leq \nu\log D \\ k \equiv 0 \bmod 4 }} \sum_{\substack{p^k \leq D^{2\nu} \\ p \equiv 1 \bmod 4 \\ (p,M)=1}}	\sum_{\substack{ \lvert m\vert  \ll \frac{p^k M}{D}  \\ p^k \mid m}}     \frac{p^k }{ p^{\frac{3}{2}k}}   \log p  +  M D^{\frac{\nu}{2} -1} \log D
		 \ll  M D^{\frac{\nu}{2} -1} \log D.
	\end{multline*}
	This completes the proof of Lemma~\ref{Lem bound non-coprimality}.
\end{proof}

\section{Proof of the main Theorems} \label{section-main}

Let  $r \in \Z[i]$, $\beta \in \{ 1, 1 + \lambda^3 \}$, $X, Y > 0$, and
\begin{equation}\label{Def_H}
	H_\beta(X, Y, r) := \sum_{\substack{c \equiv \beta \bmod 4\\ (r,c) =1}} \frac{\overline{c}}{|c|} \; \frac{g_4(r, c)}{\N(c)^{\frac{3}{2}}} \; \Lambda(c)  \; R_{X, Y}(\N(c)) 	
\end{equation}
where $\Lambda(c)= \log \N(\pi)$ when $c=\pi^k$ and 0 otherwise, and  
\begin{equation} \label{DefR_X}
	R_{X, Y}(t)  :=\widehat{\phi} \left( \frac{\log{t}}{\log X} \right)  \widehat{w} \left( \frac{Y}{t} \right).
\end{equation}
We denote by $\widetilde R_{X, Y}(s)$ the Mellin transform 
\begin{align} \label{mellin-transform}
	\widetilde R_{X,Y}(s) := \int_0^\infty y^{s-1} R_{X,Y}(y) \d y.
\end{align}

\begin{lemma}\label{TransformBounds}
	Let $w$ satisfy the conditions in \eqref{w-conditions}, and let $\phi$ be an even Schwartz function such that the support of $\widehat{\phi}$ is contained in $(- \nu, \nu)$.
	Let $R_{X,Y}$ be the function defined by~\eqref{DefR_X}.
	Then, for every real $\sigma>-1$, 
	\begin{align*}
		\widetilde R_{X, Y}(\sigma+it) 
		\ll_\sigma 
		\begin{cases}
			X^{\nu\sigma}\cdot\min (1,|t|^{-3}) & \text{ if } \sigma > 0,\\
			\log X \cdot\min (1,|t|^{-3})& \text{ if } \sigma=0,\\
			\frac{1}{Y}  X^{\nu(\sigma+1)}\cdot\min (1,|t|^{-3}) & \text{ if } \sigma \in (-1,0).
		\end{cases}
	\end{align*}
\end{lemma}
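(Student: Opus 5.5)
The plan is to compute the Mellin transform directly, using the supports of $\widehat\phi$ and $\widehat w$ to restrict the range of integration, and to integrate by parts three times to obtain the decay in $t$.

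\emph{Support.} Since $\widehat\phi$ is supported in $(-\nu,\nu)$, the factor $\widehat\phi(\log y/\log X)$ vanishes unless $X^{-\nu}<y<X^{\nu}$. Since $\widehat w$ is supported in $(-\eta,\eta)$, the factor $\widehat w(Y/y)$ vanishes unless $y>Y/\eta$ (we only use $y>0$). Hence
$$\widetilde R_{X,Y}(s)=\int_{\max(X^{-\nu},Y/\eta)}^{X^{\nu}} y^{s-1}R_{X,Y}(y)\,\d y .$$

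\emph{The bound for $|t|\le 1$.} Here we simply bound $|R_{X,Y}|\ll 1$ and handle the three ranges of $\sigma$ separately.
\begin{itemize}
\item If $\sigma>0$, then $\int_0^{X^\nu} y^{\sigma-1}\,\d y\ll_\sigma X^{\nu\sigma}$.
\item If $\sigma=0$, then $\int_{X^{-\nu}}^{X^\nu}\d y/y = 2\nu\log X$.
\item If $\sigma\in(-1,0)$, we use that $w(0)=0$, i.e. $\int\widehat w=0$, together with the smoothness of $\widehat w$. Writing $u=Y/y$, we have $\widehat w(u)=\widehat w(u)-\widehat w(0)$ near $0$? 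That is not quite the right mechanism. The cleaner route is $|\widehat w(Y/y)|\ll Y/y$, which fails since $\widehat w(0)\neq 0$ in general. Instead I use that $y>Y/\eta$ on the support, so $1\le \eta y/Y$, and therefore
$$\int y^{\sigma-1}\,\d y\le \frac{\eta}{Y}\int_0^{X^\nu} y^{\sigma}\,\d y\ll \frac{1}{Y}X^{\nu(\sigma+1)},$$
which converges because $\sigma>-1$. This matches the stated bound.
\end{itemize}

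\emph{Decay in $t$.} For $|t|\ge1$, integrate by parts three times, writing $y^{s-1}=\frac{\d}{\d y}\bigl(y^{s}/s\bigr)$ each time. The boundary terms vanish because $R_{X,Y}$ is smooth and compactly supported in $(0,\infty)$. The key observation is that the operator $\theta=y\,\frac{\d}{\d y}$ acts boundedly on both factors:
$$\theta\,\widehat\phi\Bigl(\frac{\log y}{\log X}\Bigr)=\frac{1}{\log X}\,\widehat\phi'\Bigl(\frac{\log y}{\log X}\Bigr),\qquad \theta\,\widehat w(Y/y)=-\frac{Y}{y}\,\widehat w'(Y/y).$$
The first is $O(1)$. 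The second is $O(1)$ because $u\,\widehat w'(u)$ is bounded on the support, as $\widehat w$ is compactly supported and smooth. (Smoothness of $\widehat w$ holds since $w$ is Schwartz; similarly $\widehat\phi$ is smooth since $\phi$ is Schwartz.) This gives
$$\widetilde R_{X,Y}(s)=\frac{-1}{s(s+1)(s+2)}\int y^{s-1}\,\theta^3 R_{X,Y}(y)\,\d y,$$
with $\theta^3R_{X,Y}$ bounded and supported in the same range. Applying the same integral estimates as in the $|t|\le1$ case yields the factor $|t|^{-3}$.

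\emph{Main obstacle.} The main subtlety is the case $\sigma\in(-1,0)$: the saving of $1/Y$ comes only from the lower cutoff $y>Y/\eta$ imposed by the support of $\widehat w$. A secondary point is keeping the $\theta$-derivatives uniformly bounded in $X$ and $Y$. The $1/\log X$ factors and the boundedness of $u\,\widehat w^{(j)}(u)$ handle this.
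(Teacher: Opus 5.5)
Your proof is correct and follows the paper's argument. You restrict to $\max(X^{-\nu},Y/\eta)<y<X^{\nu}$, bound trivially in the three ranges of $\sigma$ (using $1\le \eta y/Y$ for $\sigma\in(-1,0)$ is exactly right, and the detour about $w(0)=0$ can be dropped), and integrate by parts three times for $|t|\ge 1$; your operator $\theta=y\,\frac{\d}{\d y}$ is a scale-invariant repackaging of the paper's bound $\frac{\d^3}{\d y^3}R_{X,Y}(y)\ll y^{-3}$. One cosmetic slip: with $\theta^3$ the prefactor should be $-s^{-3}$, since $\frac{-1}{s(s+1)(s+2)}$ goes with $y^{3}R_{X,Y}'''=\theta(\theta-1)(\theta-2)R_{X,Y}$, but both are $\ll |t|^{-3}$ so nothing changes.
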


\begin{proof}
	Since, by hypothesis, $\widehat\phi$ is supported on $(-\nu,\nu)$ and $\widehat{w}$ is compactly supported on $(-\eta,\eta)$, it follows that $R_{X, Y}(t) = 0$ unless 
	$t \in (X^{-\nu},X^{\nu}) \cap (Y/\eta, \infty),$
	and it suffices to consider the case $Y <  \eta X^{\nu}$.
	Then, for any $t\in \R$, we have
	\begin{align}\label{eq : bound R-tilde}
		\widetilde R_{X,Y}(\sigma+it) = \int_{\max(X^{-\nu},\frac{Y}{\eta})}^{X^{\nu}} R_{X, Y}(y) y^{\sigma+it-1} \d y 
		\ll \int_{\frac{Y}{\eta}}^{X^{\nu}} y^{\sigma-1}\d y \ll_\sigma 
		\begin{cases}
			X^{\nu\sigma} & \text{ if } \sigma > 0,\\
			\log X & \text{ if } \sigma=0,\\
			\frac{1}{Y}  X^{\nu(\sigma+1)} & \text{ if } \sigma \in (-1,0).
		\end{cases}
	\end{align}
	Moreover, if $|t|\geq 1$, integrating by parts three times,
	\begin{align*}
		\widetilde R_{X, Y}(\sigma+it) 
		&= \int_{\max(X^{-\nu},\frac{Y}{\eta})}^{X^{\nu}} \widehat\phi\bigg(\frac{\log y}{\log X}\bigg)\widehat w\bigg(\frac{Y}{y}\bigg) y^{\sigma+it-1} \d y \\
		&\ll \int_{\frac{Y}{\eta}}^{X^{\nu}} \bigg| \frac{\d^3}{\d y^3}\bigg[\widehat\phi\bigg(\frac{\log y}{\log X}\bigg)\widehat w\bigg(\frac{Y}{y}\bigg)\bigg]\bigg| \frac{y^{\sigma+2}}{|t|^3} \d y.
	\end{align*} 
	Using $Y \ll y$, and the fact that $\widehat\phi, \widehat w$ and their derivatives are smooth compactly supported hence bounded functions, one has
	\begin{align*}
		\frac{\d^3}{\d y^3}\bigg[\widehat\phi\bigg(\frac{\log y}{\log X}\bigg)\widehat w\bigg(\frac{Y}{y}\bigg)\bigg] 
		\ll \frac{1}{y^3} \sum_{i+j \leq 3}  \bigg| \widehat \phi ^{(i)}\bigg(\frac{\log y}{\log X}\bigg)\bigg| \cdot\bigg| \widehat w^{(j)}\bigg(\frac{Y}{y}\bigg)\bigg|
		\ll \frac{1}{y^3}.
	\end{align*}
	Therefore,
	\begin{align*}
		\widetilde R_{X,Y}(\sigma+it) 
		&\ll  \frac{1}{|t|^3} \int_{\frac{Y}{\eta}}^{X^{\nu}} y^{\sigma -1}\d y
	\end{align*}
	for which we use \eqref{eq : bound R-tilde}, obtaining the stated bound.
\end{proof}

The rest of the paper consists of a proof of an upper bound for $H_\beta(X, Y, r)$.

\begin{proposition}\label{BoundForHAfterLindelof}
	Let $w$ satisfy the conditions in \eqref{w-conditions}, and let $\phi$ be an even Schwartz function such that the support of $\widehat{\phi}$ is contained in $(- \nu, \nu)$. 
	Let $\beta \in \lbrace 1, 1 + \lambda^3\rbrace$ and $H_\beta$ be as defined in~\eqref{Def_H}.
	For any $1 \leq U \leq X$ and any $\varepsilon > 0$,
	\begin{align}\label{H_bound_types}
		H_\beta(X, Y, r)  &\ll  
		\frac{X^{\frac{\nu}{2}+\varepsilon} U^{\frac12} \N(r)^{\frac14+\varepsilon }}{Y} 
		+ X^{\varepsilon}  \bigg(\frac{U}{Y^{\frac12}} + \frac{X^{\frac{\nu}{2}}}{Y^{\frac12} U^{\frac12}} \bigg).
	\end{align}
\end{proposition}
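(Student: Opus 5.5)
We will decompose $\Lambda(c)$ with Vaughan's identity, in the Heath-Brown form \cite{HB2000}, with the single parameter $U$. This writes $\Lambda(c)$ (for primary $c$) as a finite combination of convolutions. The \emph{Type~I} pieces have the form $\sum_{ab=c,\ a\le U}\mu(a)\log\N(b)$ or $\sum_{ab=c,\ a\le U^2}(\text{bounded coefficient})\cdot 1$. The \emph{Type~II} pieces have the form $\sum_{ab=c,\ a>U,\ b>U}\alpha_a\beta_b$ with divisor-bounded coefficients. Inserting this into \eqref{Def_H} gives sums $\Sigma_{j,\beta}(X,Y,r,U)$. Since $R_{X,Y}(\N(c))$ vanishes unless $Y/\eta\le \N(c)\le X^{\nu}$, every sum is effectively supported on $\N(c)\asymp$ dyadic ranges between $Y$ and $X^{\nu}$. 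We bound each type separately.

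For Type~I, we factor $c=ab$ and use twisted multiplicativity \eqref{rel2} to write $g_4(r,ab)$ as $\pm g_4(r,a)\,g_4(a^2r,b)$ (up to the reciprocity sign, which depends only on classes mod $4$ and is handled by splitting into residue classes). For each fixed $a$, the $b$-sum is then a smooth sum of $\overline{b}|b|^{-1}g_4(a^2r,b)\N(b)^{-3/2}$ against $R_{X,Y}(\N(a)\N(b))$, possibly weighted by $\log\N(b)$. By Mellin inversion with Lemma~\ref{TransformBounds}, this becomes an integral of $\widetilde R_{X,Y}(s)\N(a)^{-s}$ against the Dirichlet series $\psi(s;a^2r)=\sum_b g_4(a^2r,b)\N(b)^{-1/2-s}\overline b/|b|$ (or its derivative). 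We shift the contour to $\re s$ slightly below $0$: Lemma~\ref{TransformBounds} gives the factor $X^{\nu(\sigma+1)}/Y$, and the twist by $\overline b/|b|$ (Hecke character of nonzero frequency) means there is no pole at $s=\frac14$, so no main term arises. We then need a bound for $\psi$ on this line, on average over $a\le U$ (or $U^2$). Convexity alone loses too much in $\N(a^2 r)$. Instead we invoke the Lindel\"of-on-average bound, Proposition~\ref{prop-Lindelof-on-average}, for the family indexed by $a^2r$. This yields $\sum_{a\le U}|\psi(\sigma+it;a^2r)|\ll U^{1+\varepsilon}\N(r)^{1/4+\varepsilon}\cdots$, and after balancing $\sigma$ it should produce the term $X^{\nu/2+\varepsilon}U^{1/2}\N(r)^{1/4+\varepsilon}/Y$. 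The $\min(1,|t|^{-3})$ decay of $\widetilde R$ makes the $t$-integral converge. We expect this to be Lemmas~\ref{lemma_S_1} and \ref{Vaughan_bounds}.

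For Type~II, we split $c=ab$ with $\N(a),\N(b)>U$ into dyadic boxes $\N(a)\sim A$, $\N(b)\sim B$, $AB\asymp N\in[Y,X^\nu]$. We separate the weight $R_{X,Y}(\N(a)\N(b))$ by Mellin inversion, which costs $X^\varepsilon$. We again factor $g_4(r,ab)$ via \eqref{rel2}. Using \eqref{gauss_sum_squarefree} and \eqref{rel1}, the bilinear form reduces to $\sum_a\sum_b \alpha'_a\beta'_b\,\quartic{a}{b}\quartic{b}{a}$ with $|\alpha'|,|\beta'|\ll1$ after normalising by $\N(c)^{1/2}$. The symmetric product $\quartic{a}{b}\quartic{b}{a}$ equals, by quartic reciprocity, $\pm\quartic{a}{b}^2=\pm\left(\frac{a}{b}\right)_2$, a quadratic symbol. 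We therefore apply the quadratic large sieve over $\Z[i]$, which gives $\ll (A+B)^{1/2}(AB)^{1/2+\varepsilon}$. Dividing by $N^{3/2}\cdot N^{-1/2}=N$ from the normalisation and summing dyadically gives $X^\varepsilon N^{1/2}\cdot(A^{-1/2}+B^{-1/2})\cdot N^{-1/2}\ldots$. More carefully, the Cauchy–Schwarz bound is $N^{-1/2}(A+B)^{1/2}\,$, with $\min(A,B)\ge U$ and $\max(A,B)\le N/U$. For $N\ge Y$ this gives contributions $X^\varepsilon(X^{\nu/2}Y^{-1/2}U^{-1/2})$; the complementary range, where the $U$-dependence appears through the Vaughan cutoffs near $U^2$, gives $X^\varepsilon U Y^{-1/2}$. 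These are the last two terms in \eqref{H_bound_types}.

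We expect the main obstacle to be the Type~I step, namely getting Lindel\"of quality on average for $\psi(s;a^2r)$ with explicit $\N(r)^{1/4}$ dependence. Everything else is standard bookkeeping, once one checks that the coprimality $(r,c)=1$ and the congruence $c\equiv\beta\bmod 4$ survive the factorization via characters mod $4$ and $\lambda$-adic classes.
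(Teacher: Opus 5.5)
Your architecture matches the paper's. You use Vaughan's identity, then Type~I sums via Mellin inversion and Proposition~\ref{prop-Lindelof-on-average}, then Type~II sums via reciprocity and the quadratic large sieve; the Type~II part is essentially Proposition~\ref{TypeII_bound}. However, the Type~I step as you state it does not reach Proposition~\ref{prop-Lindelof-on-average}.

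After factoring $c=ab$, the $b$-sum carries the condition $(b,ar)=1$. This comes from $(r,c)=1$ and is needed to apply \eqref{rel2}. So the Dirichlet series that appears is $\G_{a\beta}(s,ar\,|\,a^2r,1)$, not the unrestricted $\G_{a\beta}(s,a^2r)$ that the Lindel\"of-on-average bound controls. The condition lives at the odd primes dividing $ar$, not at $\lambda$ or modulo $4$. It cannot simply be dropped: by \eqref{rel4}, $g_4(a^2r,b)\neq 0$ for instance when $\pi\parallel a$ and $\pi^3\parallel b$.

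Removing it is what \eqref{linear_part_psi}--\eqref{square_part_psi} and Proposition~\ref{prop_phiLC} do. Besides harmless Euler factors (of size $\N(ar)^{\varepsilon}$ for $\re(s)>\frac12$) and a sum over $d\mid r_4^*$, this produces a sum over $\delta\mid adr_2$ in which the argument becomes $r_1a^2d^2r_2^2r_3^3/\delta^2$. Since $\delta$ may share prime factors with $a$, this is no longer a family $a^2h$ with $h$ independent of $a$, which is what Proposition~\ref{prop-Lindelof-on-average} requires. The missing step is as follows.
\begin{itemize}
\item Write $\delta=\delta_a\delta'$ with $\delta_a\mid a$ and $\delta'\mid dr_2$, and re-index $a=\delta_a a'$.
\item Use the decay $\N(a)^{-\sigma}\N(\delta)^{1-3\sigma}$ from Corollary~\ref{CorLindelof} to sum over $\delta_a$.
\item Only then apply Cauchy--Schwarz and Proposition~\ref{prop-Lindelof-on-average} to $a'^2h$ with $h=r_1d^2r_2^2r_3^3/\delta'^2$ and $\N(h)\le \N(r)$. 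Split $a'$ by its class modulo $4$, since the subscript $\delta\beta ad$ varies.
\end{itemize}

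Two further points in your bookkeeping would also keep you from \eqref{H_bound_types}.

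First, the contour. The correct pairing is $\widetilde R_{X,Y}(s-1)$ with $\G(s,\cdot)$. You must move to $\re(s)=\frac12+\varepsilon$, the left edge of the range in Proposition~\ref{prop-Lindelof-on-average}, so that Lemma~\ref{TransformBounds} gives $X^{\nu/2+\varepsilon}/Y$. Moving $\widetilde R_{X,Y}$ only slightly below real part $0$ gives $X^{\nu}/Y$. There is nothing to balance: on that line the $a$-weights $|g_4(r,a)|\N(a)^{-\frac12-\sigma}\le \N(a)^{-\sigma}$ are square-summable. Cauchy--Schwarz against the second moment $U^{1+\varepsilon}\N(h)^{\frac12+\varepsilon}$ then gives the factor $U^{1/2}\N(r)^{1/4+\varepsilon}$.

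Second, the Type~I/Type~II split. Your first paragraph puts the whole range $\N(a)\le U^2$ of the $\Lambda_{\le U}*\mu_{\le U}$ piece into Type~I, which would give $U$ instead of $U^{1/2}$ in the first term. As in the paper, only $\N(ab)\le U$ is Type~I ($\Sigma_{2'}$). The range $U<\N(ab)\le U^2$ must be handled bilinearly ($\Sigma_{2''}$), with the complementary variable not bounded below by $U$. That piece, not your range $a,b>U$, is the source of the $UY^{-1/2}$ term.
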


Assuming Proposition \ref{BoundForHAfterLindelof}, we now prove Theorem~\ref{main-theorem OLD}, Theorem~\ref{main-theorem} and Corollary~\ref{Corollary non vanishing}.

\begin{proof}[Proof of Theorem~\ref{main-theorem OLD}]
	As noticed earlier, $H_\beta(X, Y, r)=0$ unless $0 < Y < \eta X^\nu$, and we may therefore restrict to this case.
	Then, using the bound~\eqref{inert-primes}, Lemma~\ref{Lem bound non-coprimality} and Remark~\ref{Proposition_reduction}, the proof of Theorem~\ref{main-theorem OLD} reduces to showing that there exists $\delta >0$ such that
	\begin{align}\label{reduction to H}
		\sum_{\substack{|m| < {16 \eta M D^{2 \nu-1}}}}   \big\lvert H_{\beta}(D^2,\frac{mD}{16M},8i^{3(\ell+1)}M^3 m) \big\rvert
		\ll \frac{M^{3}}{D^{\delta}}.
	\end{align}
	for any $\beta \in \{ 1, 1 + \lambda^3 \}$, and uniformly for $M < D^{\varepsilon}$. 

	With the bound of Proposition \ref{BoundForHAfterLindelof}, we get 
	\begin{align*}
		&\sum_{\substack{m \ll D^{2 \nu-1} M}}  \left| H_\beta \left( D^2, \frac{mD}{16 M}, 8 i^{3(\ell+1)} M^3 m \right)  \right| \\
		&\ll (MD)^{\varepsilon} \left( M^{\frac52}  D^{\nu - 1} U^{\frac12}  + M^{\frac12}D^{-\frac12}   U 
		+ M^{\frac12} D^{ \nu -\frac12}  U^{-\frac{1}{2}} \right) \sum_{m \ll D^{2 \nu -1} M} \frac{1}{m^{\frac12}}  \\
		&\ll D^{\varepsilon} M^{3+\varepsilon} \left( D^{2\nu - \frac32} U^{\frac12} + {D^{\nu-1}}  U + {D^{2\nu - 1}} U^{-\frac{1}{2}} \right).
	\end{align*}
	To make each contribution $o(1)$, we choose $U = D^\theta$. This requires
		\begin{align*}
		\begin{cases}
			2\nu-\frac32+\frac{\theta}{2} < 0 \\
			\nu-1+\theta<0 \\
			2\nu-1-\frac{\theta}{2} < 0
		\end{cases}
		\end{align*}	
	from which
	\begin{align*}
		\begin{cases}
			4\nu-2 < 3-4\nu \\
			4\nu-2 < 1-\nu.
		\end{cases}
	\end{align*}	
	This yields $\nu< \frac{3}{5}$, and concludes the proof of Theorem~\ref{main-theorem OLD}. 
\end{proof}
\begin{remark}\label{Type_II_contr}
Note that the limitation on the support comes entirely from the second term in \eqref{H_bound_types}, i.e. from the contribution of the Type II sums (Proposition~\ref{TypeII_bound}).
\end{remark}
\begin{proof}[Proof of Theorem~\ref{main-theorem}]
	Using Theorem~\ref{main-theorem OLD}, we write
	\begin{align*}
		\frac{1}{\mathcal{S}_\mathcal{F^*}(w, D)}  \sum_{d \in \cF^{*}} r(E_d) w \left( \frac{d}{D} \right)
		\leq \frac{1}{\mathcal{S}_\mathcal{F^*}(w, D)}  \sum_{d \in \cF^*} \mathcal{D}(\phi,E_d) \xrightarrow[D\rightarrow\infty]{} \widehat\phi(0) + \frac{\phi(0)}{2}
	\end{align*}
	for any non-negative, even Schwartz function $\phi$ for which $\phi(0) \geq 1$ and $\mathrm{supp}(\widehat{\phi}) \subset (-\tfrac35,\tfrac35)$.
	As in \cite{ILS} (see also~\cite[section~7]{DDW}), we pick a sequence of such functions whose limit equals
	$$ \phi_{\nu}(x) := \left( \frac{\sin{(\pi \nu x)}}{\pi \nu x} \right)^2, \quad 	\widehat\phi_\nu(t) =  \begin{cases}  \frac{\nu-|t|}{\nu^2} & \text{if $|t| < \nu$} \\    0 & \text{otherwise,} \end{cases}. $$
	for $\nu = \tfrac35$ or $1$, from which the result follows.
\end{proof}

\begin{proof}[Proof of Corollary~\ref{Corollary non vanishing}] 

For any $\varepsilon >0$,
we choose a non-negative weight function $w_\varepsilon$ satisfying the conditions in \eqref{w-conditions}  such that
	$w_\varepsilon(t) \geq 1$ for $t \in [\frac12, 1]$, 
	and $\widehat{w}_\varepsilon(0) = \frac12 + \varepsilon.$
Let
\begin{align*} 
\mathcal{N}_{\cF^{-}} [a,b] := \# \{ d \in \cF^{-} \;:\; a \leq d \leq b \}.
\end{align*}
Since $\mathcal{N}_{\cF^{-}} [1, D] \sim \frac{4}{15 \zeta(4)} D$, we get
using  ~\eqref{Eq total sum} that
	\begin{align*}
\mathcal{S}_{\cF^{-}} (w_\varepsilon, D) &= ( \mathcal{N}_{\cF^{-}} [1, D]  + o(D) ) \widehat{w}_\varepsilon(0) + o(D) \\ 
&= ( \mathcal{N}_{\cF^{-}} [D/2, D] )(1 + 2 \varepsilon) + o(D) 
\end{align*}
		We have
	\begin{align*}
		{\#\lbrace d \in \cF^-, D/2 \leq d \leq D : \mathrm{ord}_{s=1} L(s,E_d)} >1 \rbrace  &\leq 
		\sum_{d \in \cF^- \cap [D/2, D]} \frac{r(E_{d})-1}{2}w_\varepsilon\bigg(\frac{d}{D}\bigg),
	\end{align*}
	and by Theorem~\ref{main-theorem}, we deduce that
	\begin{align*}
		\#\lbrace d \in \cF^-, D/2 \leq d \leq D : \mathrm{ord}_{s=1} {L(s,E_d)} >1 \rbrace 
				&\leq \bigg(\frac{13}{12} - \frac12\bigg) {\mathcal{S}_{\cF^-}(w_\varepsilon,D)} + o(D) \\
		&\leq \frac7{12} \mathcal{N}_{\cF^{-}} [D/2, D]  (1 + 2 \varepsilon)  + o(D).
	\end{align*}
		Summing dyadically on both sides, we get
	\begin{align*}
	\frac{{\#\lbrace d \in \cF^-, 1 \leq d \leq D : \mathrm{ord}_{s=1} L(s,E_d)} >1 \rbrace }{\#\lbrace d \in \cF^-, 1 \leq d \leq D \}  }
	 &\leq \frac{7}{12} +\varepsilon + o(1),
	\end{align*}
for any $\varepsilon>0$. The first statement of the corollary follows. The proofs of the other two statements are similiar.
	\end{proof}

\section{Vaughan's Identity} \label{Vaughan}

We now record Vaughan's identity~\cite{Vau}.
Our setting is analogous to \cite[\S2]{HB2000} and \cite{DDHL}.
Let $X, Y > 0$, we use the notation $R_{X,Y}$ as in~\eqref{DefR_X}.
For $1 \leq U \leq X$, $j \in \{0,1,2^{\prime},2^{\prime \prime},3,4\}$,  $r \in \Z[i]$, and $\beta \in \{ 1, 1 + \lambda^3 \}$, we define
\begin{equation} \label{sigmajl}
	\Sigma_{j,\beta} (X,Y, r,U):=\sum_{\substack{  a,b,c \equiv 1 \bmod{\lambda^3} \\ abc \equiv \beta \bmod{4}  \\ (abc,r)=1}}
	\Lambda(a) \mu(b) 
	\frac{g_4(r,abc)}{\N(abc)^{\frac32}}
	\frac{\overline{abc}}{|abc|} R_{X,Y} (\N(abc))
\end{equation}
where $a,b,c \in \mathbb{Z}[i]$ are subject to the conditions: 
\begin{align*}
	\N(bc) & \leq U,  \quad \quad \quad \quad \quad \quad  j=0;  \\ 
	\N(b) & \leq U,  \quad \quad \quad \quad \quad \quad   j=1;  \\ 
	\N(ab) & \leq U,  \quad \quad \quad \quad \quad \quad j=2';  \\ 
	\N(a),\N(b) \leq U &<\N(ab),   \quad \quad \quad \quad j=2'';  \\ 
	\N(b) \leq U&<\N(a),\N(bc),   \quad \hspace{0.2cm} j=3;  \\ 
	\N(a),\N(bc) & \leq U, \quad \quad \quad \quad \quad \quad j=4. 
\end{align*}
Throughout, we always have $a,b,c \equiv 1 \bmod{\lambda^3}$ even if not mentioned explicitly. 
Recall that, as discussed in the proof of Lemma~\ref{TransformBounds}, the support conditions on $\widehat{\phi}$ and $\widehat{w}$ imply that every $\Sigma_{j,\beta} (X,Y, r,U)$ vanishes unless $Y < \eta X^{\nu}$. Accordingly, this bound is assumed implicitly in the following.
Then Vaughan's identity reads,
\begin{align} \begin{split} \label{Vauid}
	\Sigma_{0,\beta}(X,Y, r,U) + \Sigma_{2',\beta}(X,Y,r,U) + \Sigma_{2^{\prime \prime},\beta}(X,Y,r,U)+\Sigma_{3,\beta}(X,Y,r,U)\\
	=\Sigma_{1,\beta}(X,Y,r,U)+\Sigma_{4,\beta}(X,Y,r,U).
\end{split} \end{align}

We now proceed to bound each $\Sigma_{j,\beta}(X,Y,r,U)$ contribution, independently.  We start by bounding the term $\Sigma_{4,\beta}(X,Y,r,U)$ trivially.

\begin{lemma}\label{bound S4}
	Let $w$ satisfy the conditions in \eqref{w-conditions}, and let $\phi$ be an even Schwartz function such that the support of $\widehat{\phi}$ is contained in $(- \nu, \nu)$.
	Let $X, Y > 0$,  $r \in \Z[i]$, and $\beta \in \{ 1, 1 + \lambda^3 \}$, and $\Sigma_{j,\beta}$ as defined in~\eqref{sigmajl}.
	For any $U\leq X$, we have that 
	\[ \Sigma_{0,\beta}(X,Y, r,U) = H_{\beta}(X,Y, r),\]
	and 
	\[ \Sigma_{4,\beta}(X,Y, r,U) \ll \frac{U}{Y} .\]
\end{lemma}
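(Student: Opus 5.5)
For the identity $\Sigma_{0,\beta}=H_\beta$, I will use that the condition $\N(bc)\le U$ together with $\sum_{b\mid n}\mu(b)=[n=1]$ collapses the inner sums. Concretely, I group the terms of $\Sigma_{0,\beta}$ by $n=bc$. The constraint $\N(bc)\leq U$ is a condition on $n$ alone, and $a,b,c\equiv 1\bmod{\lambda^3}$ with $b,c$ ranging over primary divisor pairs of $n$. The coefficient attached to $(a,n)$ is then $\Lambda(a)\sum_{b\mid n,\, b \text{ primary}}\mu(b)$. Since every divisor of a primary $n$ has a unique primary associate, this inner sum is the usual M\"obius sum over ideals, which vanishes unless $n=1$. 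Only $b=c=1$ survives, so the sum reduces to the sum over $a\equiv\beta\bmod 4$, $(a,r)=1$ of the summand of \eqref{Def_H}. This is exactly $H_\beta(X,Y,r)$, using $U\geq 1$ so that $n=1$ is allowed.

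For $\Sigma_{4,\beta}$, I bound trivially. By \eqref{gauss_sum_bound}, $|g_4(r,abc)|\le \N(abc)^{1/2}$ when $(abc,r)=1$, so each term is at most $\Lambda(a)\,|\mu(b)|\,\N(abc)^{-1}|R_{X,Y}(\N(abc))|$. The function $R_{X,Y}$ is bounded, and it vanishes unless $\N(abc)>Y/\eta$, since $\widehat w$ is supported in $(-\eta,\eta)$. Because $\mu^2(abc)$ is also forced by \eqref{gauss_sum_bound}, the product $abc$ is squarefree, hence the triples $(a,b,c)$ with a given product $n=abc$ are few and $\Lambda(a)\le\log\N(n)$.

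The direct estimate would be
\[
\Sigma_{4,\beta}\ll \sum_{\substack{\N(a)\le U,\ \N(bc)\le U\\ \N(abc)>Y/\eta}}\frac{\Lambda(a)}{\N(a)\N(bc)},
\]
and this is the main obstacle: a direct evaluation gives a logarithmic size $\sum\Lambda(a)/\N(a)\cdot\sum 1/\N(m)$ rather than $U/Y$. To get $U/Y$, I will instead use $\N(abc)>Y/\eta$ to write $\N(abc)^{-1}\le \eta/Y$. The terms are then bounded by $Y^{-1}$ times the number of pairs $(a,m)$, $m=bc$, weighted by $\Lambda(a)$ and the divisor function of $m$.

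Counting those pairs needs care. The pairs have $\N(a)\le U$, $\N(m)\le U$, and also $\N(am)\le X^{\nu}$ from the support of $\widehat\phi$. I will try to arrange the summation so that one variable is summed with weight $\Lambda(a)/\N(a)$ while the $1/Y$ saving is extracted from the other, giving a bound $\ll (U/Y)\,X^{\varepsilon}$. If the claimed clean $U/Y$ is not attainable, I expect a harmless $X^{\varepsilon}$ loss, consistent with the $X^\varepsilon$ factor in Proposition~\ref{BoundForHAfterLindelof}.
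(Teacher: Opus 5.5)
\paragraph{The $\Sigma_{0,\beta}$ part.} Your argument here is correct and is the paper's: group by $m=bc$, use $\sum_{b\mid m}\mu(b)=[m=1]$, and note that $m=1$ is admissible because $U\ge 1$.

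\paragraph{The gap in $\Sigma_{4,\beta}$.} The argument for $\Sigma_{4,\beta}$ has a genuine gap, and it is not a matter of an $X^{\varepsilon}$ loss. Once you take absolute values term by term in the triple sum over $(a,b,c)$, you discard the M\"obius cancellation. The resulting majorant is then simply not of size $U/Y$.

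For instance, take $U\le X^{\nu/2}$, so that the support condition $\N(abc)<X^{\nu}$ is automatic, and take $Y/\eta=U^{3/2}$. The terms with $b=1$, $U^{3/4}<\N(a)\le U$ and $U^{3/4}<\N(c)\le U$ alone contribute $\gg(\log U)^2$ to the right-hand side of your displayed majorant, whereas $U/Y\asymp U^{-1/2}$. No rearrangement of the counting can repair this. Using $\N(abc)^{-1}\le \eta/Y$ gives roughly $U^2/Y$. Keeping the weight $1/(\N(a)\N(bc))$ gives a polylogarithm with no decay in $Y$. Interpolating between the two gives about $U^{2\theta}Y^{-\theta}$, which never beats $U/Y$.

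\paragraph{The missing observation.} The constraints defining $\Sigma_{4,\beta}$, namely $\N(a)\le U$ and $\N(bc)\le U$, involve $b$ and $c$ only through the product $m=bc$, exactly as for $\Sigma_{0,\beta}$. So the same collapse applies. Grouping by $n=am$ gives
\[
\Sigma_{4,\beta}(X,Y,r,U)=\sum_{\substack{n\equiv\beta \bmod 4\\ (r,n)=1\\ \N(n)\le U}}\frac{\overline{n}}{|n|}\,\frac{g_4(r,n)}{\N(n)^{3/2}}\,\Lambda(n)\,R_{X,Y}(\N(n)),
\]
that is, $\Sigma_{4,\beta}$ is just the portion of $H_\beta(X,Y,r)$ with $\N(n)\le U$.

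Now the trivial bound suffices. We have $|g_4(r,n)|\le \N(n)^{1/2}$, the functions $\widehat\phi$ and $\widehat w$ are bounded, and $\widehat w(Y/\N(n))=0$ unless $\N(n)>Y/\eta$. Hence
\[
\Sigma_{4,\beta}(X,Y,r,U)\ll\sum_{Y/\eta<\N(n)\le U}\frac{\Lambda(n)}{\N(n)}\le\frac{\eta}{Y}\sum_{\N(n)\le U}\Lambda(n)\ll\frac{U}{Y},
\]
by a Chebyshev-type bound in $\Z[i]$. This is exactly the paper's proof.
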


\begin{proof}
	By M\"{o}bius inversion, we first note that
	\begin{align*}
		\Sigma_{0,\beta}(X,Y,r,U)
		&= \sum_{\substack{n \equiv \beta \bmod 4\\ (r,n) =1}} \frac{\overline{n}}{|n|} \; \frac{g_4(r, n)}{\N(n)^{\frac{3}{2}}} R_{X, Y}(\N(n))\sum_{\substack{a \mid n \\ \N(\frac{n}{a}) \leq U }}\Lambda(a) \sum_{b\mid \frac{n}{a}  } \mu(b) \\
		&= \sum_{\substack{n \equiv \beta \bmod 4\\ (r,n) =1}} \frac{\overline{n}}{|n|} \; \frac{g_4(r, n)}{\N(n)^{\frac{3}{2}}} R_{X, Y}(\N(n))\Lambda(n) 
		=  H_{\beta}(X,Y,r).
	 \end{align*}
	
	Similarly, recalling the definition \eqref{DefR_X} of $R_{X, Y}$ and using the fact that $\widehat\phi$ is bounded, we obtain the trivial bound
	\begin{equation*}\begin{split}\label{S_4.28Jan}
		\Sigma_{4,\beta}(X,Y,r,U)
		&= \sum_{\substack{n \equiv \beta \bmod 4\\ (r,n) =1}} \frac{\overline{n}}{|n|} \; \frac{g_4(r, n)}{\N(n)^{\frac{3}{2}}} R_{X, Y}(\N(n))\sum_{\substack{a \mid n \\ \N(a) \leq U \\ \N(\frac{n}{a}) \leq U \\  }}\Lambda(a) \sum_{b\mid \frac{n}{a}   } \mu(b)   \\
		&\ll  \sum_{\N(n) \leq U} \frac{\Lambda(n)}{\N(n)}  \bigg|\widehat{w} \left( \frac{Y}{\N(n)} \right)\bigg|.
	\end{split}\end{equation*}
	Since $\widehat w$ is bounded and supported on $(-\eta,\eta)$, the prime number theorem yields
	\begin{equation*}
		\Sigma_{4,\beta}(X,Y,r,U) \ll
		\sum_{ \frac{Y}{\eta} \leq \N(n) \leq U} \frac{\Lambda(n)}{\N(n)} \ll \frac{1}{Y} \sum_{{Y} \leq \N(n) \leq U} \Lambda(n) \ll \frac{U}{Y},
	\end{equation*}
	which concludes the proof.
\end{proof}

To bound the type~I sums $\Sigma_{1, \beta}(X, Y, r, U)$ and $\Sigma_{2', \beta}(X, Y, r, U)$, we first integrate by parts to treat the logarithmic weights. 

\begin{lemma}\label{smooth integration by parts}
	Let $R_{X, Y}$ be a smooth function compactly supported on $(X^{-\nu},X^\nu)$ as in \eqref{DefR_X}, 
	and let {$(b_n)_{n \in \Z[i]}$} be a sequence of complex numbers with $\lvert b_n\vert \leq 1$.  Then, we have
	\begin{align*}
		\sum_{n \in \Z[i]} b_n R_{X, Y}(\N(n))\log \N(n)  
		= &\frac{\log X^\nu }{2\pi i}\int_{(2)} \mathcal{B}(s) \widetilde R_{X, Y}(s)\d s \\
		&+ \frac{1}{4\pi^2 }\int_{(2)}\int_{(2)} \mathcal{B}(s+s') X^{\nu s'}\widetilde R_{X, Y}(s)\frac{\d s'}{s'^2}\d s,
	\end{align*}
where  $\widetilde R_{X, Y}(s)$ is the Mellin transform \eqref{mellin-transform}
and
\[ \mathcal{B}(s) := \sum_{n \in \Z[i]}  b_n \N(n)^{-s}, \quad \quad \re(s) > 1.\]
\end{lemma}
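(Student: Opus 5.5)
The plan is to split the logarithm at the top of the support of $R_{X,Y}$ and handle each piece by Mellin inversion. On the support of $R_{X,Y}(\N(n))$ we have $\N(n)<X^\nu$, so we write
\begin{align*}
\log \N(n) = \log X^\nu - \log\big(X^\nu/\N(n)\big).
\end{align*}
Multiplying by $b_n R_{X,Y}(\N(n))$ and summing over $n$ splits the left-hand side into two sums, which we treat separately.

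For the first sum we use Mellin inversion of the smooth, compactly supported function $R_{X,Y}$:
\begin{align*}
R_{X,Y}(t)=\frac{1}{2\pi i}\int_{(2)}\widetilde R_{X,Y}(s)\,t^{-s}\,\d s, \qquad t>0.
\end{align*}
Substituting $t=\N(n)$ and interchanging sum and integral gives $\frac{\log X^\nu}{2\pi i}\int_{(2)}\mathcal B(s)\widetilde R_{X,Y}(s)\,\d s$. The interchange is justified because $\sum_n |b_n|\N(n)^{-2}<\infty$ (the Dedekind zeta function of $\Q(i)$ converges absolutely for $\re(s)>1$). Also, by Lemma~\ref{TransformBounds}, $\widetilde R_{X,Y}(2+it)\ll X^{2\nu}\min(1,|t|^{-3})$, which is integrable in $t$. (Strictly, $\widetilde R_{X,Y}$ depends only on the properties of $R_{X,Y}$ as a smooth compactly supported function, and the same integration-by-parts argument applies.)

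For the second sum we use the standard truncated-log formula: for $x>0$,
\begin{align*}
\frac{1}{2\pi i}\int_{(2)} x^{s'}\frac{\d s'}{s'^2}=\begin{cases}\log x & x\geq 1,\\ 0 & 0<x<1.\end{cases}
\end{align*}
This follows by shifting the contour left past the double pole at $s'=0$ (residue $\log x$) when $x\ge1$, or right when $x<1$. Applied with $x=X^\nu/\N(n)\ge 1$, it gives $-\log(X^\nu/\N(n)) = -\frac{1}{2\pi i}\int_{(2)} X^{\nu s'}\N(n)^{-s'}\,\d s'/s'^2$. Where $\N(n)\geq X^\nu$ the formula may differ from the logarithm, but $R_{X,Y}$ vanishes there, so nothing changes.

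Inserting the Mellin inversion of $R_{X,Y}(\N(n))$ as before produces a double integral. Its prefactor is $-\frac{1}{(2\pi i)^2}=\frac{1}{4\pi^2}$, and its integrand is $\mathcal B(s+s')X^{\nu s'}\widetilde R_{X,Y}(s)/s'^2$, which is exactly the second term in the statement.

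The only point needing care, and the main potential obstacle, is justifying the interchange of the $n$-sum with both integrals. On $\re(s)=\re(s')=2$ the triple sum--integral is dominated by
\begin{align*}
\sum_n \N(n)^{-4}\int|\widetilde R_{X,Y}(2+it)|\,\d t\int \frac{\d t'}{|2+it'|^2},
\end{align*}
which is finite by the decay $|t|^{-3}$ of $\widetilde R_{X,Y}$ and the integrability of $|s'|^{-2}$. Fubini therefore applies, and adding the two pieces gives the claimed identity.
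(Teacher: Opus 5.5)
Your proof is correct and is essentially the paper's argument. The paper gets the same split $\log \N(n)=\log X^\nu-\int_{\N(n)}^{X^\nu}\frac{\d t}{t}$ by partial summation, applies Perron's formula with kernel $1/s'$, and then integrates $t^{s'-1}$ over $(0,X^\nu)$ to produce the $1/s'^2$ kernel. You instead use the truncated-logarithm identity $\frac{1}{2\pi i}\int_{(2)}x^{s'}\frac{\d s'}{s'^2}=\max(\log x,0)$ directly, which keeps every integral absolutely convergent and makes the Fubini step immediate rather than passing through the conditionally convergent Perron integral.
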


\begin{proof}
	By partial summation, we note that
	\begin{align*}
		\sum_{\N(n) \leq X^\nu} b_n \N(n)^{-s}\log \N(n)  
		= \log X^\nu \sum_{\N(n) \leq X^\nu} b_n \N(n)^{-s} - \int_{0}^{X^\nu}\sum_{\N(n) \leq t} b_n \N(n)^{-s}\frac{\d t}{t}.
	\end{align*}
	Moreover, by Mellin inversion,
	\begin{align*}
		\sum_{n \text{ primary}}& b_n R_{X, Y}(\N(n))\log \N(n) 
		= \sum_{\N( n) \leq X^\nu} b_n\log \N(n) \frac{1}{2\pi i}\int_{(2)} \N(n)^{-s} \widetilde R_{X, Y}(s) \d s\\
		&= \frac{\log X^\nu }{2\pi i}\int_{(2)}\sum_{\N(n) \leq X^\nu}\frac{b_n }{\N(n)^{s}} \widetilde R_{X, Y}(s) \d s  
		- \frac{1}{2\pi i}\int_{(2)}\int_{0}^{X^\nu}\sum_{\N(n) \leq t}\frac{b_n }{\N(n)^{s}}\frac{\d t}{t} \widetilde R_{X, Y}(s)\d s.
	\end{align*}
	The first term on the second line above is
	\begin{align*} \begin{split}
		\frac{\log X^\nu }{2\pi i}\int_{(2)}\sum_{\N(n) \leq X^\nu} b_n \N(n)^{-s} \widetilde R_{X, Y}(s)\d s 
		=  \log X^\nu \sum_{\N(n) \leq X^\nu} b_n R_{X, Y}(\N(n)) \\
		=  \log X^\nu \sum_{n \in \Z[i]}  b_n R_{X, Y}(\N(n)) 
		= \frac{\log X^\nu }{2\pi i}\int_{(2)} \mathcal{B}(s) \widetilde R_{X, Y}(s)\d s, 
	\end{split}\end{align*}
	where we note that the sum may be completed since $R_{X, Y}$ is compactly supported.  
	Finally, Perron's formula yields
	\begin{align*}
		-\frac{1}{2\pi i}\int_{(2)}\int_{0}^{X^\nu}\sum_{\N(n) \leq t} b_n \N(n)^{-s}\frac{\d t}{t} \widetilde R_{X, Y}(s)\d s
		&= 	\frac{1}{4\pi^2 }\int_{(2)}\int_{(2)}\int_{0}^{X^\nu} \mathcal{B}(s+s') t^{s'} \frac{\d t}{t}\widetilde R_{X, Y}(s)\frac{\d s'}{s'}\d s \\
		&= \frac{1}{4\pi^2 }\int_{(2)}\int_{(2)} \mathcal{B}(s+s') X^{\nu s'}\widetilde R_{X, Y}(s)\frac{\d s'}{s'^2}\d s.
	\end{align*}
	Combining the two terms gives the result.
\end{proof}

We now introduce some notation for Dirichlet series of Gauss sums. 
For $s \in \C$, $r \in \Z[i]$, and $\beta \in \{1, 1 + \lambda^3 \}$, we define
\begin{align} \label{generating-phi}
	\G_{\beta}(s, r) :&=  \sum_{\substack{c \equiv \beta \bmod 4}} \frac{g_4(r, c)}{\N(c)^{\frac{1}{2} + s}} \frac{\overline{c}}{|c|}.
\end{align}
By \eqref{gauss_sum_bound} and \eqref{rel4}, 
$\G_{\beta}(s, r)$  converges  absolutely for $\re(s) > 1$.
We also define for $s \in \C$, $\beta \in \{1, 1 + \lambda^3 \}$, and $v, r, a \in \Z[i]$ with $v|r$, 
\begin{align} \label{phi-var1}
	\G_{\beta}(s, v|r, a) &:= \sum_{\substack{c \equiv \beta \bmod 4\\ (c, v)=1\\ a \mid c}} \frac{g_4(r, c)}{\N(c)^{\frac{1}{2} + s}} \frac{\overline{c}}{|c|}.
\end{align}

\begin{lemma}\label{lemma_S_1}
	Let $w$ satisfy the conditions in \eqref{w-conditions}, and let $\phi$ be an even Schwartz function such that the support of $\widehat{\phi}$ is contained in $(- \nu, \nu)$.
	Let $X, Y > 0$,  $r \in \Z[i]$, $\beta \in \{ 1, 1 + \lambda^3 \}$, $R_{X,Y}$ as defined in~\eqref{DefR_X} and $\Sigma_{1,\beta}$ as defined in~\eqref{sigmajl}.
	Then for $U \leq X$, we have
	\begin{align*}
		\Sigma_{1,\beta}(X,Y,r,U) \ll &\log X \sum_{\N(a) \leq U}\mu^{2}(a)\bigg \lvert\int_{(2)}\G_{\beta}(s, r|r, a) \widetilde R_{X, Y}(s-1)\d s \bigg \rvert \\
		& + \sum_{\N(a) \leq U}\mu^{2}(a)\bigg \lvert\int_{(2)}\int_{(2)} \G_{\beta}(s+s', r|r, a) X^{\nu s'}\widetilde R_{X, Y}(s-1)\frac{\d s'}{s'^2}\d s\bigg \rvert,
	\end{align*}
\end{lemma}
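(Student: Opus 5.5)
Recall that in $\Sigma_{1,\beta}$ the condition is $\N(b)\le U$, while $a$ and $c$ are free. The plan is to sum over $a$ first, collapsing $\Lambda$ into a logarithm, then apply Lemma~\ref{smooth integration by parts} to the resulting sum over $n$.

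\textbf{Step 1: collapse the von Mangoldt factor.} Rename $b$ as the outer variable and set $n=ac$ (then $n\equiv 1 \bmod \lambda^3$ automatically). Since $\sum_{a\mid n}\Lambda(a)=\log \N(n)$, we get
\begin{align*}
\Sigma_{1,\beta}(X,Y,r,U)=\sum_{\substack{\N(b)\le U\\ (b,r)=1}}\mu(b)\sum_{\substack{n\equiv 1 \bmod \lambda^3\\ bn\equiv\beta \bmod 4\\ (n,r)=1}}\frac{g_4(r,bn)}{\N(bn)^{3/2}}\frac{\overline{bn}}{|bn|}R_{X,Y}(\N(bn))\log\N(bn)-\Sigma_{1}'',
\end{align*}
where we write $\log \N(n)=\log\N(bn)-\log\N(b)$. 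Alternatively, we keep $\log \N(n)$ and apply the lemma with weight $R_{X,Y}(\N(b)\,\cdot)$. The cleanest route is to substitute $m=bn$: for fixed $b$ (square-free, since $\mu(b)\neq0$), $m$ runs over $m\equiv\beta \bmod 4$ with $(m,r)=1$ and $b\mid m$. The inner sum is then $\sum_m b_m R_{X,Y}(\N(m))\log\N(m)$ with
\[
b_m=\frac{g_4(r,m)}{\N(m)^{3/2}}\frac{\overline m}{|m|}\,\mathbf 1_{m\equiv\beta \bmod 4,\,(m,r)=1,\,b\mid m}.
\]
By \eqref{gauss_sum_bound}, $|b_m|\le 1$. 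The generating series is $\mathcal B(s)=\G_\beta(s+1,r|r,b)$, since $\N(m)^{-3/2-s}=\N(m)^{-1/2-(s+1)}$.

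\textbf{Step 2: apply Lemma~\ref{smooth integration by parts} and shift.} This gives the two integrals with $\mathcal B(s)\widetilde R_{X,Y}(s)$ and $\mathcal B(s+s')X^{\nu s'}\widetilde R_{X,Y}(s)$. The change of variables $s\mapsto s-1$ turns these into $\G_\beta(s,r|r,b)\widetilde R_{X,Y}(s-1)$ on the line $\re s=3$. Since $\G_\beta$ converges absolutely for $\re s>1$ and $\widetilde R_{X,Y}$ is entire with the decay of Lemma~\ref{TransformBounds} (the function is compactly supported), we can move the contour to $\re s=2$. We also use $\log X^\nu\ll\log X$.

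\textbf{Step 3: the subtracted term.} The $\log\N(b)$ correction (if we split the logarithm this way) is handled identically: it equals $\log \N(b)\le\log U\le\log X$ times the first integral, so it is absorbed into the first term. Taking absolute values, replacing $\mu(b)$ by $\mu^2(b)$, and renaming $b$ as $a$ gives the stated bound. The condition $(b,r)=1$ is built into the coprimality $(c,v)=(c,r)=1$ in $\G_\beta(s,r|r,a)$, so we may drop it from the outer sum.

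The only point needing care is Step 1: keeping the congruence and coprimality conditions intact under $m=bn$, and checking that $n$ primary with $b$ primary recovers exactly $m\equiv\beta \bmod 4$. This is immediate because primary elements form a multiplicative set. No real analytic obstacle arises here; the difficulty is postponed to bounding $\G_\beta$.
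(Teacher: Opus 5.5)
Your proposal is correct and follows the paper's proof essentially step for step. Like the paper, you collapse $\sum_{a\mid n/b}\Lambda(a)$ into $\log\N(n)-\log\N(b)$ and apply the triangle inequality over square-free $b$. You then treat the $\log\N(n)$ part with Lemma~\ref{smooth integration by parts} and the $\log\N(b)\le\log U\le\log X$ part by plain Mellin inversion, renaming $b$ as $a$ at the end. Your explicit justification of the contour shift from $\re s=3$ back to $\re s=2$ after the substitution $s\mapsto s-1$ is a detail the paper leaves implicit.
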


\begin{proof}
	The proof follows similarly to \cite[pp. 102$-$103]{HB2000}.  A similar result for cubic characters may moreover be found in \cite[Lemma 5.2]{DG}.  To begin, employing M\"obius inversion for the von Mangoldt function, we note that
	\begin{align*}
		|\Sigma_{1,\beta}(X,Y, r,U)|  &= \bigg|\sum_{\substack{n \equiv \beta \bmod 4\\ (r,n) =1}} \frac{\overline{n}}{|n|} \; \frac{g_4(r, n)}{\N(n)^{\frac{3}{2}}} \;R_{X, Y}(\N(n)) \sum_{\substack{b\mid n \\ \N(b) \leq U}}\mu(b) \sum_{a \mid \frac{n}{b}}\Lambda(a) \bigg| \\
	&\leq \sum_{\N(b) \leq U}\mu^{2}(b)\bigg \lvert\sum_{\substack{n \equiv \beta \bmod 4\\ (r,n) =1  \\ b\mid n}}\frac{\overline{n}}{|n|} \; \frac{g_4(r, n)}{\N(n)^{\frac{3}{2}}} \;R_{X, Y}(\N(n))\left(\log \N(n)-\log \N(b)\right)\bigg \rvert.
	\end{align*}

	Since $R_{X, Y}$ is supported on $(X^{-\nu},X^{\nu})$, by Mellin inversion, we write
	\[\sum_{\substack{n \equiv \beta \bmod 4\\ (r,n) =1  \\ b\mid n }} \frac{\overline{n}}{|n|} \; \frac{g_4(r, n)}{\N(n)^{\frac{3}{2}}} R_{X, Y}(\N(n)) \log \N(b) = \frac{\log \N(b) }{2\pi i}\int_{(2)}\G_{\beta}(s, r|r, b)\widetilde R_{X, Y}(s-1)\d s\]
	while by Lemma \ref{smooth integration by parts}, we have
	\begin{align*}
		\sum_{\substack{n \equiv \beta \bmod 4\\ (r,n) =1  \\ b\mid n}} \frac{\overline{n}}{|n|} \; \frac{g_4(r, n)}{\N(n)^{\frac{3}{2}}} \;R_{X, Y}(\N(n))&\log \N(n)= \frac{\log X^\nu }{2\pi i}\int_{(2)}\G_{\beta}(s, r|r, b) \widetilde R_{X, Y}(s-1)\d s \\
		& + \frac{1}{4\pi^2 }\int_{(2)}\int_{(2)} \G_{\beta}(s+s', r|r, b) X^{\nu s'}\widetilde R_{X, Y}(s-1)\frac{\d s'}{s'^2}\d s,
	\end{align*}
	from which the proof follows.
\end{proof}

\begin{lemma}\label{Vaughan_bounds}
	Let $w$ satisfy the conditions in \eqref{w-conditions}, and let $\phi$ be an even Schwartz function such that the support of $\widehat{\phi}$ is contained in $(- \nu, \nu)$.
	Let $X, Y > 0$,  $r \in \Z[i]$, $\beta \in \{ 1, 1 + \lambda^3 \}$, $R_{X,Y}$ as defined in~\eqref{DefR_X} and $\Sigma_{2',\beta}$ as defined in~\eqref{sigmajl}.
	Then for $U \leq X$, we have
	\[\lvert \Sigma_{2',\beta}(X,Y, r,U)\rvert \leq \log U \sum_{\N(a) \leq U}\mu^{2}(a)\bigg \lvert\int_{(2)}\G_{\beta}(s, r|r, a) \widetilde R_{X, Y}(s-1) \d s \bigg \rvert.\] 
\end{lemma}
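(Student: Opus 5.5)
The plan mirrors the proof of Lemma~\ref{lemma_S_1}, with one simplification: since $\N(ab) \leq U$ in $\Sigma_{2',\beta}$, no integration by parts is needed to handle the von Mangoldt weight.

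First, I group the terms of \eqref{sigmajl} with $j=2'$ by the product $n = abc$. The conditions $a,b,c \equiv 1 \bmod \lambda^3$ and $abc\equiv\beta \bmod 4$ translate into $n \equiv \beta \bmod 4$, and $(abc,r)=1$ becomes $(n,r)=1$. The constraint $\N(ab)\le U$ only involves $a$ and $b$. I then set $e = ab$ and write
\begin{align*}
	\Sigma_{2',\beta}(X,Y,r,U) = \sum_{\N(e) \leq U} \Big(\sum_{ab = e} \Lambda(a)\mu(b)\Big) \sum_{\substack{n \equiv \beta \bmod 4\\ (n,r)=1 \\ e\mid n}} \frac{\overline{n}}{|n|}\frac{g_4(r,n)}{\N(n)^{\frac32}} R_{X,Y}(\N(n)).
\end{align*}
Here the divisors $a,b$ of $e$ are normalised to be primary, so the factorisation $n = e\cdot c$ is unique with $c$ primary. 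The inner coefficient is $(\Lambda*\mu)(e)$, and it satisfies
\begin{align*}
	\Big|\sum_{ab=e}\Lambda(a)\mu(b)\Big| \leq \sum_{a\mid e}\Lambda(a) = \log \N(e) \leq \log U.
\end{align*}

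Second, the inner $n$-sum is exactly the Mellin inversion already used in Lemma~\ref{lemma_S_1}. Because $R_{X,Y}$ is compactly supported, the sum is absolutely convergent, and shifting to the line $\re(s)=2$ gives
\begin{align*}
	\frac{1}{2\pi i}\int_{(2)} \G_\beta(s, r|r, e)\,\widetilde R_{X,Y}(s-1)\,\d s.
\end{align*}
To see this, note that $\N(n)^{-3/2} R_{X,Y}(\N(n)) = \frac{1}{2\pi i}\int_{(2)} \N(n)^{-\frac12 - s}\widetilde R_{X,Y}(s-1)\,\d s$. Interchanging sum and integral is allowed by the absolute convergence of $\G_\beta$ for $\re(s)>1$ together with the decay of $\widetilde R_{X,Y}$ from Lemma~\ref{TransformBounds}.

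Third, I take absolute values. Each term is at most $\log U$ times the modulus of this integral, and the factor $\frac{1}{2\pi}<1$ can be dropped. What remains is to replace $e$ by a squarefree $a$, i.e.\ to justify the weight $\mu^2(a)$. For the Mangoldt–Möbius convolution, $(\Lambda*\mu)(e)$ vanishes unless $e$ is a prime power. The only remaining issue is the non-squarefree prime powers $e=\pi^j$ with $j\ge2$.

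This is where I expect the main obstacle, and I would try two approaches. One is to note that, up to the harmless $\log U$ bound, the coefficient $\sum_{ab=e}\Lambda(a)\mu(b)$ is $\Lambda(e)-\Lambda(e/\pi)$ for $e=\pi^j$. This equals $0$ for $j\ge2$ and $\log\N(\pi)$ for $j=1$, so only $e=\pi$ (squarefree) and $e=1$ survive, with coefficient $0$ at $e=1$. If that bookkeeping is right, the $\mu^2$ restriction comes for free. Failing that, I would bound the non-squarefree $e$ directly using \eqref{gauss_sum_bound} and \eqref{rel4}. These show that $g_4(r,n)$ vanishes when $(n,r)=1$ and $n$ is not squarefree, so only squarefree $e$ contribute at all. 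This second route is in fact the cleaner justification, since $g_4(r,n)=0$ for non-squarefree $n$ coprime to $r$ immediately forces $e$ to be squarefree.
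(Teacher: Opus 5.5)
Your argument is correct and has the same structure as the paper's proof: regroup by $e=ab$, bound the convolution coefficient by $\log U$, and apply Mellin inversion on $\re(s)=2$ to produce $\G_\beta(s,r|r,e)$.

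The one real difference is how you obtain the weight $\mu^2$. The paper gets it from the coefficient itself, via $\bigl|\sum_{b\mid a}\Lambda(a/b)\mu(b)\bigr|\le \mu^2(a)\log\N(a)$. Your second route gets it from the series instead. By \eqref{gauss_sum_bound}, $g_4(r,c)=0$ for non-squarefree $c$ coprime to $r$, so $\G_\beta(s,r|r,e)\equiv 0$ whenever $e$ is not squarefree. Both routes are valid. Yours uses the structure of the Gauss sums, while the paper's is purely arithmetic and would still work with arbitrary bounded weights in place of $g_4$.

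Your first route, however, rests on a false claim: $\Lambda*\mu$ is not supported on prime powers. For squarefree $e=\pi_1\cdots\pi_k$, only the terms $b=e/\pi_j$ contribute, which gives $(\Lambda*\mu)(e)=-\mu(e)\log\N(e)$. For instance, $e=\pi_1\pi_2$ has coefficient $-\log\N(\pi_1\pi_2)\neq 0$, so it is not true that only $e=\pi$ survives. The correct statement is that the coefficient vanishes exactly when $e$ is not squarefree, and this is precisely the paper's bound. So that route can be repaired, but as written you should discard it and rely on the Gauss-sum vanishing argument.
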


\begin{proof}
	We write
	\begin{align*}
		\Sigma_{2',\beta}(X,Y,r,U) &=\sum_{\substack{n \equiv \beta \bmod 4\\ (r,n) =1}} \frac{\overline{n}}{|n|} \; \frac{g_4(r, n)}{\N(n)^{\frac{3}{2}}} \; R_{X, Y}(\N(n) )\sum_{\substack{\N(ab) \leq U\\ ab|n}}\Lambda(a)\mu(b)\\
		&=\sum_{\N(a) \leq U}\sum_{\substack{n \equiv \beta \bmod 4\\ (r,n) =1\\ a\mid n}} \frac{\overline{n}}{|n|} \; \frac{g_4(r, n)}{\N(n)^{\frac{3}{2}}} \;R_{X, Y}(\N(n))\sum_{b|a}\Lambda(\tfrac{a}{b})\mu(b).
	\end{align*}
	Noting that
	\begin{align*}
		\big\lvert \sum_{b|a}\Lambda(\tfrac{a}{b})\mu(b)\big\rvert \leq \log \N(a)\mu^{2}(a),
	\end{align*}
	it follows by Mellin inversion that
	\begin{align*}
		|\Sigma_{2',\beta}(X,Y, r,U)|
		&\leq \sum_{\substack{\N(a)\leq U}}\log \N(a)\mu^{2}(a)\bigg \lvert\frac{1}{2\pi i}\int_{(2)}\G_{\beta}(s, r|r, a) \widetilde R_{X, Y}(s-1)\d s\bigg \rvert.
	\end{align*}
This is the desired bound.
\end{proof}

Next, we seek a bound on the Type~II (i.e. bilinear) sums, $\Sigma_{2'',\beta}(X,Y,r,U)$ and $\Sigma_{3,\beta}(X,Y,r,U)$.  We treat the two sums in parallel. 

\begin{proposition}\label{TypeII_bound}
	Let $w$ satisfy the conditions in \eqref{w-conditions}, and let $\phi$ be an even Schwartz function such that the support of $\widehat{\phi}$ is contained in $(- \nu, \nu)$.
	Let $X, Y > 0$,  $r \in \Z[i]$, $\beta \in \{ 1, 1 + \lambda^3 \}$, $R_{X,Y}$ as defined in~\eqref{DefR_X} and $\Sigma_{j,\beta}$ as defined in~\eqref{sigmajl}.
	Then for $U \leq X$
	and for any $\varepsilon >0$, we have
	$$|\Sigma_{2'',\beta}(X,Y,r,U)| + |\Sigma_{3,\beta}(X,Y,r,U)|
	\ll X^{\varepsilon}  \bigg(\frac{U}{Y^{\frac12}} + \frac{X^{\frac\nu2}}{Y^{\frac12}U^{\frac12}} \bigg) .$$
\end{proposition}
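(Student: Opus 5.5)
We follow Heath-Brown--Patterson \cite{HBP} and \cite{HB2000}: rewrite both $\Sigma_{2'',\beta}$ and $\Sigma_{3,\beta}$ as bilinear forms and bound them with the quadratic large sieve over $\Z[i]$.

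\textbf{Bilinear reduction.} In $\Sigma_{3,\beta}$ set $m=a$, $n=bc$ with $\N(m)>U$, $\N(n)>U$. The coefficients are $\alpha_m=\Lambda(m)$ and $\beta_n=\sum_{b c = n,\ \N(b)\le U}\mu(b)$, so $|\beta_n|\le \tau(n)$. In $\Sigma_{2'',\beta}$ set $m=ab$ with $\N(m)>U$, coefficient $\sum_{ab=m,\ \N(a),\N(b)\le U}\Lambda(a)\mu(b)\ll \log\N(m)$, and $n=c$ with $\beta_n=1$.

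Only squarefree $mn$ matter. For coprime $m,n$ the twisted multiplicativity \eqref{rel2} gives
\[ g_4(r,mn)=\overline{\left(\tfrac{r}{mn}\right)_4}\left(\tfrac{m}{n}\right)_4\left(\tfrac{n}{m}\right)_4 g_4(1,m)g_4(1,n). \]
Quartic reciprocity \eqref{bilaw} turns the middle factor into $(-1)^{C(m,n)}\left(\tfrac{m}{n}\right)_4^2=(-1)^{C(m,n)}\left(\tfrac{m}{n}\right)_2$, a quadratic symbol. The sign $(-1)^{C(m,n)}$ and the congruence $mn\equiv\beta \bmod 4$ depend only on classes modulo $4$, so we fix classes of $m,n$ and absorb these into the coefficients. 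The factors $\overline{(r/m)_4}\,g_4(1,m)\,\overline{m}/|m|$ and the analogous $n$-factors have modulus $\le 1$ after normalization by \eqref{gauss_sum_squarefree}, and also go into $\alpha_m,\beta_n$. The $r$-dependence therefore disappears, which explains why Proposition~\ref{TypeII_bound} has no $\N(r)$.

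The weight $R_{X,Y}(\N(mn))/\N(mn)$ is separated by Mellin inversion, using Lemma~\ref{TransformBounds} with decay $|t|^{-3}$ so that the $t$-integral costs only $X^\varepsilon$; alternatively we split $\N(m),\N(n)$ dyadically. The coprimality condition $(m,n)=1$ is removed by Möbius inversion at an $X^\varepsilon$ cost. Since $R_{X,Y}$ forces $Y/\eta<\N(mn)<X^\nu$, we reduce to dyadic blocks $\N(m)\sim M$, $\N(n)\sim N$ with $Y\ll MN\ll X^\nu$, and must bound
\[ \frac{1}{MN}\sum_{m\sim M}\sum_{n\sim N}\alpha_m\beta_n\left(\tfrac{m}{n}\right)_2. \]

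\textbf{Large sieve.} The quadratic large sieve over $\Z[i]$ (Heath-Brown type, or Onodera's extension to number fields) gives
\[ \sum\sum \alpha_m\beta_n\left(\tfrac{m}{n}\right)_2 \ll (MN)^\varepsilon(M+N)^{1/2}(MN)^{1/2}\|\alpha\|_2\|\beta\|_2, \]
with $\|\alpha\|_2\ll M^{1/2+\varepsilon}$ and $\|\beta\|_2\ll N^{1/2+\varepsilon}$. Dividing by $MN$, a block contributes $X^\varepsilon (M+N)^{1/2}(MN)^{-1/2}=X^\varepsilon(N^{-1/2}+M^{-1/2})$.

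\textbf{Optimizing over the ranges.} This is the step I expect to be the main obstacle: we need both the range constraints and $MN\ge Y$. The term $\min(M,N)^{-1/2}$ alone is not sufficient. We instead write $(M+N)^{1/2}(MN)^{-1/2}\le (MN)^{-1/2}\bigl(\min+\max\bigr)^{1/2}$ and use the constraints as follows.

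In $\Sigma_{3,\beta}$ we have $\N(b)\le U$, but $c$ is long. We therefore first run the Cauchy--Schwarz/large-sieve with the roles arranged so that the short variable is $b\le U$, paired with $ac$. Each block then gives roughly $(MN)^{-1/2}(U^{1/2}(MN)^{1/2}\cdot U^{1/2}+\dots)$. In $\Sigma_{2'',\beta}$ the variable $m=ab$ satisfies $U<\N(m)\le U^2$.

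Combining with $MN\ge Y$ should yield blocks bounded by $U\,Y^{-1/2}$ (when the short variable is up to $U^2$, so the $\max^{1/2}$ factor is $U$) and $(X^\nu/U)^{1/2}Y^{-1/2}$ (when $\min>U$, using $\max\le X^\nu/U$). More precisely, we bound $(MN)^{-1/2}\max(M,N)^{1/2}$ with $\max\le \min(U^2, X^\nu/U)$ and $MN\ge Y$, possibly after inserting an extra factor via $\|\cdot\|_2$ with $\N(mn)^{1/2}$-weights. Summing the $O(\log^2 X)$ dyadic blocks gives the claimed $X^\varepsilon(UY^{-1/2}+X^{\nu/2}Y^{-1/2}U^{-1/2})$.

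The delicate point is getting the $Y^{-1/2}$ saving from the lower cutoff $\N(mn)\gg Y$ imposed by $\widehat w(Y/t)$, instead of losing it to the trivial normalization. I would track this carefully in the dyadic decomposition.
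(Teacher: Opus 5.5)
Your proposal is correct and follows essentially the same route as the paper: twisted multiplicativity plus quartic reciprocity reduce the problem to the quadratic symbol, then Mellin inversion in $R_{X,Y}$, dyadic blocks, and Onodera's quadratic large sieve give $X^{\varepsilon}(V^{-1/2}+W^{-1/2})$ per block, after which the paper just uses $V>U$ and $W\gg Y/V$ together with $V\le U^2$ (for $\Sigma_{2'',\beta}$) or $V\le X^{\nu}/U$ (for $\Sigma_{3,\beta}$) --- which is what your bound $(MN)^{-1/2}\max(M,N)^{1/2}\le Y^{-1/2}\max(M,N)^{1/2}$ with a case split amounts to, so the step you flag as the main obstacle is routine. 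Three slips to fix: the displayed large-sieve inequality should not carry the extra factor $(MN)^{1/2}$ (your per-block conclusion is only correct without it); the range constraint should read $\max(M,N)\le\max(U^2,X^{\nu}/U)$, i.e.\ a case split, not $\min(U^2,X^{\nu}/U)$; and regrouping $\Sigma_{3,\beta}$ around the short variable $b$ is unnecessary (and not bilinear, since $\N(bc)>U$ couples $b$ and $c$).
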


\begin{proof}
	The proof follows similarly to \cite[Prop. 7.1]{DDHL}, which in turn is based on \cite[Lemma 2]{HB2000} (see also \cite[Lemma 5.2]{DG}).  We begin by noting that 
	\begin{align*}
		\Sigma_{2'',\beta}(X,Y, r,U)&=\sum_{\substack{n \equiv \beta \bmod 4\\ (r,n) =1}} \frac{\overline{n}}{|n|} \; \frac{g_4(r, n)}{\N(n)^{\frac{3}{2}}} \;R_{X, Y}(\N(n)) \sum_{\substack{\N(a),\N(b) \leq U\\ \N(ab) > U\\ ab|n}}\Lambda(a)\mu(b)\\
		&=\sum_{\substack{v,w \textnormal{ primary}\\
		vw \equiv \beta \bmod 4\\ (r,vw) =1\\ \N(v) > U}}\frac{\overline{vw}}{|vw|} \; \frac{g_4(r, vw)}{\N(vw)^{\frac{3}{2}}} \;R_{X, Y}(\N(vw)) \sum_{\substack{\N(a),\N(b) \leq U\\ ab=v}}\Lambda(a)\mu(b).
	\end{align*}
	By \eqref{bilaw} and \eqref{rel2}, we note that
	\[g_4(r, vw) = \quartic{v}{w}\quartic{w}{v}g_{4}(r,v)g_{4}(r,w)=(-1)^{C(v,w)}\left(\frac{w}{v}\right)_{2}g_{4}(r,v)g_{4}(r,w).\]
	It thus follows that
	\begin{align*}
		\Sigma_{2'',\beta}(X,Y,r,U)&=\sum_{\substack{v,w \textnormal{ primary}\\
		vw \equiv \beta \bmod 4\\ \N(v) > U}}(-1)^{C(v,w)}A_{2''}(v)B_{2''}(w)\left(\frac{w}{v}\right)_{2}R_{X, Y}( \N(vw)),
	\end{align*}
	where
	\begin{align*}
		A_{2''}(v)&:=\mathds{1}_{\{(v,r)=1\}}\frac{\overline{v}}{|v|} \; \frac{g_4(r, v)}{\N(v)^{\frac{3}{2}}} \sum_{\substack{\N(a),\N(b) \leq U\\ ab=v}}\Lambda(a)\mu(b)\\
		B_{2''}(w)&:=\mathds{1}_{\{(w,r)=1\}}\frac{\overline{w}}{|w|} \; \frac{g_4(r, w)}{\N(w)^{\frac{3}{2}}}.
	\end{align*}
	We moreover note that $A_{2''}(v) = 0$ whenever $\N(v) > U^{2}$.
	Similarly, we write
	\begin{align*}
		\Sigma_{3,\beta}(X,Y,r,U)&=\sum_{\substack{n \equiv \beta \bmod 4\\ (r,n) =1}} \frac{\overline{n}}{|n|} \; \frac{g_4(r, n)}{\N(n)^{\frac{3}{2}}} 
		\;R_{X, Y} ( \N(n) )\sum_{\substack{ a\mid n \\ \N(a) > U \\ \N(\frac{n}{a})>U}}\Lambda(a)\sum_{\substack{b|\frac{n}{a} \\ \N(b)\leq U}}\mu(b)\\
		&=\sum_{\substack{v,w \textnormal{ primary}\\
		vw \equiv \beta \bmod 4\\ \N(v) > U}}(-1)^{C(v,w)}A_{3}(v)B_{3}(w)\left(\frac{w}{v}\right)_{2} R_{X, Y}(\N(vw)),
	\end{align*}
	where
	\begin{align*}
		A_{3}(v)&:=\mathds{1}_{\{(v,r)=1\}}\frac{\overline{v}}{|v|} \; \frac{g_4(r, v)}{\N(v)^{\frac{3}{2}}} \Lambda(v)\\
		B_{3}(w)&:=\mathds{1}_{\{(w,r)=1\}}\mathds{1}_{\{\N(w)> U\}} \frac{\overline{w}}{|w|} \; \frac{g_4(r, w)}{\N(w)^{\frac{3}{2}}}\sum_{\substack{b|w\\ \N(b)\leq U}}\mu(b).
	\end{align*}
	By \eqref{gauss_sum_bound}, the functions $A_{2''}, A_{3}, B_{2''}$, and $B_{3}$ are each supported on square-free elements of~$\Z[i]$.

	To estimate $\Sigma_{j,\beta}(X,Y, r,U)$ for $j \in \{2'',3\}$ we dyadically partition by setting
	\begin{equation}\label{VW_def}
		V_{k}:=2^{k}U \textnormal{ and } W_{\ell,k}:=\frac{Y}{\eta} \frac{2^{\ell}}{2V_{k}}.
	\end{equation}
	For $\Sigma_{2'',\beta}(X,Y, r,U)$, since the sum over $v$ may be restricted to $U < \N(v) < U^{2}$, we find that
	\begin{align}\label{S''_bound}\begin{split}
		|\Sigma_{2'',\beta}(X,Y, r,U)|&\leq \sum_{\substack{0 \leq k \leq \left\lceil \tfrac{\log U}{\log 2}\right\rceil \\ 0 \leq \ell \leq \frac{\log (2X^{2\nu})}{\log 2}}}\sum_{\substack{
		\eta,\gamma \in \{1,1+\lambda^3\} \\ \eta\gamma \equiv \beta \bmod{4}}}\bigg\lvert \sum_{\substack{\N(v) \in (V_{k},V_{k+1}]\\
		\N(w) \in [W_{\ell,k},W_{\ell+1,k})\\
		v \equiv \eta \bmod 4\\ w \equiv \gamma \bmod 4}}A(v)B(w)\left(\frac{w}{v}\right)_{2}R_{X, Y}( \N(vw))\bigg\rvert\\
		&\ll_{\nu} \log U \cdot \log X \cdot \max_{\substack{U < V \leq U^{2} \\ \frac{Y}{2\eta V} \leq W \leq \frac{X^\nu}{V}}}\bigg\lvert \sum_{\substack{\N(v) \in (V,2V]\\
		\N(w) \in [W,2W)\\
		v \equiv \eta \bmod 4\\ w \equiv \gamma \bmod 4}}A(v)B(w)\left(\frac{w}{v}\right)_{2}R_{X, Y}( \N(vw))\bigg\rvert,
	\end{split}\end{align}
	where $A(v):=A_{2''}(v)$ and $B(w):=B_{2''}(w)$.  Similarly for $\Sigma_{3,\beta}(X,Y, r,U)$, as the sum over $w$ may be restricted to $\N(w)>U$, we may restrict the sum over $v$  to $\N(v) <\frac{X^{\nu}}{U}$, and we obtain
	\begin{align}\label{S3_bound}\begin{split}
		|\Sigma_{3,\beta}(X,Y, r,U)|&\leq \sum_{\substack{0 \leq k \leq \left\lceil \tfrac{\log (X^\nu/U^{2})}{\log 2}\right\rceil \\ 0 \leq \ell \leq \frac{\log 2X^{2\nu}}{\log 2}}}\sum_{\substack{
		\eta,\gamma \in \{1,1+\lambda^3\} \\ \eta\gamma \equiv \beta \bmod{4}}}\bigg\lvert \sum_{\substack{\N(v) \in (V_{k},V_{k+1}]\\
		\N(w) \in [W_{\ell,k},W_{\ell+1,k})\\
		v \equiv \eta \bmod 4\\ w \equiv \gamma \bmod 4}}A(v)B(w)\left(\frac{w}{v}\right)_{2} R_{X, Y}( \N(vw) )\bigg\rvert\\
		&\ll_{\nu} \log U \cdot \log X \cdot \max_{\substack{U < V \leq \frac{X^\nu}{U} \\  \frac{Y}{2\eta V} \leq W \leq \frac{X^\nu}{V}}}\bigg\lvert \sum_{\substack{\N(v) \in (V,2V]\\
		\N(w) \in [W,2W)\\
		v \equiv \eta \bmod 4\\ w \equiv \gamma \bmod 4}}A(v)B(w)\left(\frac{w}{v}\right)_{2} R_{X, Y}(\N(vw)) \bigg\rvert,
	\end{split}\end{align}
	where $A(v):=A_{3}(v)$ and $B(w):=B_{3}(w)$.  
	We now bound the inner sum in~\eqref{S''_bound} and~\eqref{S3_bound} in parallel. For $A= A_{2''}$ or $A_3$ and $B= B_{2''}$ or $B_3$, we define the Dirichlet polynomials
	\[P_{\eta,\gamma}(s;V,W):=\sum_{\substack{
	\N(v) \in (V,2V]\\
	\N(w) \in [W,2W)\\
	v \equiv \eta \bmod 4\\ w \equiv \gamma \bmod 4}}A(v)B(w)\left(\frac{w}{v}\right)_{2}\N(vw)^{-s}, \quad \quad s \in \C.\]
	Then, by Mellin inversion, we have
	\begin{align}\label{TypeII_Mellin_Bound}\begin{split}
		\sum_{\substack{\N(v) \in (V,2V]\\
		\N(w) \in [W,2W)\\
		v \equiv \eta \bmod 4\\ w \equiv \gamma \bmod 4}}A(v)B(w)\left(\frac{w}{v}\right)_{2} R_{X, Y}(\N(vw)) &= \frac{1}{2\pi}\int_{\R}\widetilde R_{X, Y}(it)P_{\eta,\gamma}(it;V,W)\d t.
	\end{split}\end{align}
	The quadratic large sieve inequality over $\Z[i]$~\cite[Thm. 1]{Onodera}, states that for an arbitrary complex sequence $\{b_{w}\},$ any $V,W \geq 1$, and $\varepsilon > 0$, we have the bound
	\[\sum_{\substack{v \in \Z[i]\\ \N(v)\leq V\\ v \textnormal{ primary}}}\mu^{2}(v)\bigg \lvert \sum_{\substack{w \in \Z[i]\\ \N(w)\leq W \\ w  \textnormal{ primary}}}\mu^{2}(w)b_{w}\left(\frac{w}{v}\right)_{2}\bigg \rvert^{2}\ll_{\varepsilon}(VW)^{\varepsilon}(V+W)\sum_{\N(w)\leq W}|b_{w}|^{2}\mu^{2}(w).\]
	Together with the Cauchy--Schwarz inequality, it follows that for all $t \in \R$,
	\begin{align*}
		|P_{\eta,\gamma}(it;V,W)|^{2} 
		&\leq \sum_{\substack{\N(v) \in (V,2V]\\
		v \equiv \eta \bmod 4}}|A(v)|^{2}\sum_{\substack{\N(v) \in (V,2V]\\
		v \equiv \eta \bmod 4}}\mu^{2}(v)\bigg\vert\sum_{\substack{\N(w) \in [W,2W)\\
		w \equiv \gamma \bmod 4}}B(w)\N(w)^{-it}\left(\frac{w}{v}\right)_{2}\bigg|^{2}\\
		&\ll_{\varepsilon} \sum_{\substack{\N(v) \in (V,2V]\\
		v \equiv \eta \bmod 4}}|A(v)|^{2}\bigg((VW)^{\varepsilon}(V+W)\sum_{\substack{
		\N(w) \in [W,2W)\\
		w \equiv \gamma \bmod 4}}|B(w)|^{2}\mu^2(w)\bigg ).
	\end{align*}
	Using the bound~\eqref{gauss_sum_bound}, and upon noting that $d(n) \ll \N(n)^{\varepsilon}$ for any $\varepsilon > 0$, we find that $A(v) \ll V^{-1+\varepsilon}$ for all $\N(v) \in (V,2V]$ and $B(w)\ll W^{-1 +\varepsilon}$ for all $\N(w) \in [W,2W)$. In the range $VW \leq X^\nu$, it follows that for all $t \in \R$,
	\begin{align*}
		|P_{\eta,\gamma}(it;V,W)|^{2} &\ll_{\varepsilon} \frac{VX^{\varepsilon}}{V^2W^2}\bigg((VW)^{\varepsilon}(V+W)WX^{\varepsilon}\bigg ) \ll X^{\varepsilon} \frac{V+W}{VW}.
	\end{align*}
	Applying this bound in~\eqref{TypeII_Mellin_Bound} we thus find that
	\begin{align}\begin{split}
		\sum_{\substack{\N(v) \in (V,2V]\\
		\N(w) \in [W,2W)\\
		v \equiv \eta \bmod 4\\ w \equiv \gamma \bmod 4}} A(v)B(w)\left(\frac{w}{v}\right)_{2}R_{X,Y}( \N(vw))
		&\ll \int_{\R}|\widetilde R_{X, Y}(it)|\cdot |P_{\eta,\gamma}(it;V,W)|\d t\\
		&\ll X^{\varepsilon}(VW)^{-1/2}(V^{\frac{1}{2}}+W^{\frac{1}{2}}),
	\end{split}\end{align}
	where we used Lemma~\ref{TransformBounds} to bound the Mellin transform $\widetilde{R}_{X, Y}$. 

	Finally, coming back to~\eqref{S''_bound} we then conclude that
	\begin{align*}
		|\Sigma_{2'',\beta}(X,Y, r,U)|
		&\ll X^\varepsilon \cdot \max_{\substack{U < V \leq U^{2} \\ \frac{Y}{2\eta V} \leq W \leq \frac{X^\nu}{V}}}(V^{-\frac12} + W^{-\frac12})\\
		&\ll X^\varepsilon \cdot \max_{\substack{U < V \leq U^{2}}} \bigg (V^{-\frac12} + \bigg(\frac{V}{Y}\bigg)^{\frac12}\bigg) 
		\ll X^{\varepsilon} \bigg(\frac{U}{Y^{\frac12}} + U^{-\frac{1}{2}}\bigg).
	\end{align*}
	Similarly, in~\eqref{S3_bound} we find that
	\begin{align*}
		|\Sigma_{3,\beta}(X,Y,r,U)|
		&\ll X^\varepsilon \cdot \max_{\substack{U < V \leq \frac{X^{\nu}}{U} \\ \frac{Y}{2\eta V} \leq W \leq \frac{X^{\nu}}{V}}} (V^{-\frac{1}{2}}+W^{-\frac{1}{2}})
		\ll X^{\varepsilon}  U^{-\frac{1}{2}} \bigg( 1 + \frac{X^{\frac\nu2}}{Y^{\frac12}}\bigg).
	\end{align*}
	Recall that we can assume $Y < \eta X^\nu$, as $\Sigma_{j,\beta}$ vanishes otherwise and the bound is trivial, then the bound announced in the statement follows.
\end{proof}

Using Lemma~\ref{bound S4}, Lemma~\ref{lemma_S_1}, Lemma~\ref{Vaughan_bounds}, and Proposition~\ref{TypeII_bound} in \eqref{Vauid}, we arrive at the following.

\begin{proposition}\label{PropositionH}
	Let $w$ satisfy the conditions in \eqref{w-conditions}, and let $\phi$ be an even Schwartz function such that the support of $\widehat{\phi}$ is contained in $(- \nu, \nu)$.
	Let $X, Y > 0$,  $r \in \Z[i]$, $\beta \in \{ 1, 1 + \lambda^3 \}$, and $R_{X,Y}$ as defined in~\eqref{DefR_X}.
	Then for $1 \leq U \leq X$ and for any $\varepsilon >0$, we have
	\begin{align*}
		H_\beta(X,Y, r) \ll_{R,\varepsilon} &\log X \sum_{\N(a) \leq U}\mu^{2}(a)\bigg \lvert\int_{(2)}\G_{\beta}(s, r|r, a) \widetilde R_{X, Y}(s-1)\d s \bigg \rvert \\
		& + \sum_{\N(a) \leq U}\mu^{2}(a)\bigg \lvert\int_{(2)}\int_{(2)} \G_{\beta}(s+s', r|r, a) X^{\nu s'}\widetilde R_{X, Y}(s-1)\frac{\d s'}{s'^2}\d s\bigg \rvert\\
		& + X^{\varepsilon}  \bigg(\frac{U}{Y^{\frac12}} + \frac{X^{\frac\nu2}}{Y^{\frac12}U^{\frac12}} \bigg).
	\end{align*}
\end{proposition}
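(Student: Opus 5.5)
The proof is a bookkeeping step. Vaughan's identity \eqref{Vauid} gives
\[
H_\beta(X,Y,r)=\Sigma_{0,\beta}(X,Y,r,U)=\Sigma_{1,\beta}+\Sigma_{4,\beta}-\Sigma_{2',\beta}-\Sigma_{2'',\beta}-\Sigma_{3,\beta},
\]
where all $\Sigma$'s are evaluated at $(X,Y,r,U)$. The identity $\Sigma_{0,\beta}=H_\beta$ is the first part of Lemma~\ref{bound S4}. By the triangle inequality it then suffices to bound each term on the right separately.

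For the Type~I pieces we use two earlier lemmas.
\begin{itemize}
\item Lemma~\ref{lemma_S_1} bounds $\Sigma_{1,\beta}$ by exactly the two integral expressions appearing in the statement: the single integral carries the factor $\log X$, and the double integral in $s,s'$ carries the weight $X^{\nu s'}/s'^2$.
\item Lemma~\ref{Vaughan_bounds} bounds $\Sigma_{2',\beta}$ by $\log U$ times the same single-integral sum. Since $U\le X$, we have $\log U\le\log X$, so this term is absorbed into the first term of the statement.
\end{itemize}

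For the Type~II pieces, Proposition~\ref{TypeII_bound} gives
\[
|\Sigma_{2'',\beta}|+|\Sigma_{3,\beta}|\ll X^\varepsilon\bigl(UY^{-1/2}+X^{\nu/2}Y^{-1/2}U^{-1/2}\bigr),
\]
which is the third term of the statement.

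The only remaining piece is $\Sigma_{4,\beta}\ll U/Y$ from Lemma~\ref{bound S4}, and this is the one point that needs any care. We must check that $U/Y$ is dominated by the Type~II term. Since $U\le X$ and we may assume $Y<\eta X^\nu$ (otherwise every sum vanishes), we have
\[
\frac{U}{Y}=\frac{U}{Y^{1/2}}\cdot\frac{1}{Y^{1/2}}.
\]
If $Y\gg1$, then $U/Y\ll U/Y^{1/2}$ and we are done. If $Y$ is small, I would instead return to the trivial bound in the proof of Lemma~\ref{bound S4}: there $\N(n)\ge Y/\eta$ and $\N(n)\le U$, so $\Sigma_{4,\beta}\ll\sum_{\N(n)\le U}\Lambda(n)/\N(n)\ll\log U\ll X^\varepsilon$. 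The term $X^\varepsilon$ is absorbed into $X^{\varepsilon}X^{\nu/2}Y^{-1/2}U^{-1/2}$ whenever $U\le X^\nu/Y$; otherwise, when $U>X^\nu/Y$ and $Y$ is small, one has $U\ge U^{1/2}(X^\nu/Y)^{1/2}$, and the term is absorbed into $X^\varepsilon U/Y^{1/2}$ after a short case check.

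Collecting the five bounds gives the stated inequality, with the implied constant depending on $R$ (that is, on $\phi$ and $w$) and on $\varepsilon$.
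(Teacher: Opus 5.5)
Your proposal is correct and follows the paper's route exactly: it substitutes Lemma~\ref{bound S4}, Lemma~\ref{lemma_S_1}, Lemma~\ref{Vaughan_bounds} and Proposition~\ref{TypeII_bound} into Vaughan's identity \eqref{Vauid}, and uses $\log U\le\log X$ for $\Sigma_{2',\beta}$. Your explicit check that $\Sigma_{4,\beta}\ll U/Y$ is absorbed by the Type~II term is a detail the paper leaves implicit; for $Y\le 1$ your case split can be skipped, since $U/Y^{1/2}\ge 1$ there while $\Sigma_{4,\beta}\ll 1+\log U\ll X^{\varepsilon}$.
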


\section{The Dirichlet series of Gauss sums} \label{after-Vaughan}

Let $X, Y > 0$,  $r \in \Z[i]$, $\beta \in \{ 1, 1 + \lambda^3 \}$. To bound $H_\beta(X, Y, r)$ via Proposition~\ref{PropositionH}, we study the Dirichlet series 
\begin{align*}
	\G_{\beta}(s, v|r, a) &= \sum_{\substack{c \equiv \beta \bmod 4\\ (c, v)=1\\ a \mid c}} \frac{g_4(r, c)}{\N(c)^{\frac{1}{2} + s}} \frac{\overline{c}}{|c|}
\end{align*}
defined in~\eqref{phi-var1} for $\re(s) > 1$,
and $v, r, a \in \Z[i]$ with $v|r$.
We express these Dirichlet series as linear combinations of the Dirichlet series from~\eqref{generating-phi}
\begin{align*} 
	\G_{\beta}(s, r) &=  \sum_{\substack{c \equiv \beta \bmod 4}} \frac{g_4(r, c)}{\N(c)^{\frac{1}{2} + s}} \frac{\overline{c}}{|c|},
\end{align*}
whose analytic properties are well-understood.

We first deal with the divisibility condition $a | c$ in the sum defining $\G_{\beta}(s, v|r, a)$.

\begin{lemma}\label{lem psi remove divisibility}
	Let $a, r \in \Z[i]$ where $a$ is primary and $(a,r)=1$, and $\beta \in \{1,1+\lambda^3\}$.  Then for $\re(s)> 1$,
	\[\G_{\beta}(s, r|r, a) = 
	(-1)^{C(a, a \beta)} \frac{\overline{a}}{|a|}  \frac{g_4(r, a)}{\N(a)^{\frac{1}{2}+s}} \G_{a\beta}(s,ar|a^{2}r,1),
	\]
	where $C(\cdot,\cdot)$ is defined in \eqref{Cdef}.
\end{lemma}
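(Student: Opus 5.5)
Write $c = a c'$ in the definition of $\G_{\beta}(s, r|r, a)$. By \eqref{gauss_sum_bound}, $g_4(r,c)=0$ unless $c$ is square-free (since $(c,r)=1$), so only terms with $(a,c')=1$ survive. Also $(c',r)=1$, so the summation runs over primary $c'$ with $(c', ar)=1$ and $ac' \equiv \beta \bmod 4$. Since $a$ is primary, $a^2 \equiv 1 \bmod 4$, and the congruence condition is equivalent to $c' \equiv a\beta \bmod 4$, with $a\beta$ read as a residue in $\{1,1+\lambda^3\}$.

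Next, apply the twisted multiplicativity \eqref{rel2} in its second form with $c=a$ and $c'$, which is valid because $(a,c')=1$ and both are coprime to $\lambda$:
\[
g_4(r, ac') = (-1)^{C(a,c')} g_4(r,a)\, g_4(a^2 r, c').
\]
The sign $(-1)^{C(a,c')}$ depends only on $a$ and $c' \bmod 4$, as noted after \eqref{Cdef}. Because $c'\equiv a\beta \bmod 4$, it equals the constant $(-1)^{C(a,a\beta)}$ and can be pulled out of the sum. The remaining factors split multiplicatively as
\[
\N(ac')^{-\frac12-s}=\N(a)^{-\frac12-s}\N(c')^{-\frac12-s}, \qquad \frac{\overline{ac'}}{|ac'|}=\frac{\overline a}{|a|}\,\frac{\overline{c'}}{|c'|}.
\]

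What remains is the sum over $c' \equiv a\beta \bmod 4$ with $(c', ar)=1$ of
\[
\frac{g_4(a^2r,c')}{\N(c')^{\frac12+s}}\,\frac{\overline{c'}}{|c'|}.
\]
Since $ar \mid a^2 r$, this sum is exactly $\G_{a\beta}(s, ar\,|\,a^2r, 1)$ as defined in \eqref{phi-var1}, with trivial divisibility condition $1 \mid c'$. Absolute convergence for $\re(s)>1$ justifies all the rearrangements.

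There is no real obstacle here; the only points needing care are bookkeeping. First, the coprimality $(a,c')=1$ has to be derived from square-freeness rather than assumed. Second, $a\beta$ must be identified with the correct residue class among $\{1, 1+\lambda^3\}$ modulo $4$, which is consistent with the paper's notation.
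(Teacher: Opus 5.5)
Your proposal is correct and follows the paper's proof step for step. You write $c=ac'$, use \eqref{gauss_sum_bound} to force $(a,c')=1$, use $a^2\equiv 1 \bmod 4$ to turn $ac'\equiv\beta$ into $c'\equiv a\beta \bmod 4$, apply the second form of \eqref{rel2}, and pull out the constant sign $(-1)^{C(a,a\beta)}$, which is exactly the paper's argument.
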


\begin{proof}
	By \eqref{gauss_sum_bound}, we recall that if $(r,c)=1$, then $g_4(r, c) \neq 0$ if and only if $c$ is square-free.  In such a case, $a|c$ moreover implies that $c = a c'$ where $a, c'$ are square-free and where $(a, c')=1$.  By \eqref{rel2}, and upon noting that $a \equiv a^{-1} \bmod{4}$ whenever $a$ is primary, we then find that 
	\begin{align*}
		\G_{\beta}(s, r|r, a) 
		&= \sum_{\substack{c \equiv \beta \bmod 4\\ (r, c)=1\\a \mid c}} \frac{\overline{c}}{|c|} \; \frac{g_4(r, c)}{\N(c)^{\frac{1}{2}+s}} 
		= \sum_{\substack{c' \equiv a \beta \bmod 4\\ (ar, c')=1}} \frac{\overline{a c'}}{|a c'|} \; \frac{g_4(r, a c')}{\N(a c')^{\frac{1}{2}+s}} \\ 
 		&=  (-1)^{C(a, a \beta)} \frac{\overline{a}}{|a|}  \frac{g_4(r, a)}{\N(a)^{\frac{1}{2}+s}} \sum_{\substack{c \equiv a \beta \bmod 4\\ (a r, c)=1}} \frac{\overline{c}}{|c|} \; \frac{g_4(a^2r, c)}{\N(c)^{\frac{1}{2}+s}} ,
	\end{align*}
	as desired.
\end{proof}

We now want to remove the coprimality condition. 
Considering~\eqref{rel4}, we see that the behaviour depends on the power at which each prime divides the arguments in the Gauss sum.
Fourth powers dividing~$r$ are the easiest case.

\begin{lemma}\label{lem psi remove fourth power coprim}
	Let $\beta \in \{1,1+\lambda^3\}$, and  $r \in \Z[i]$.  We write $r = r_1 r_2^2 r_3^3 r_4^4$, where $r_1, r_2, r_3$ are square-free and coprime, and let $r_4^*$ denote the product of the primes $\pi \in \Z[i]$ such that  $\pi \mid r_4$, but $\pi \nmid r_1 r_2 r_3$.  Then for $\textnormal{Re}(s)> 1$,
	\[\G_{\beta}(s,r|r,1) = \sum_{\substack{d \mid r_4^{*}\\d \textnormal{ primary}}} \mu(d)  (-1)^{C(\beta d, d)}   \hecke{d} \frac{g_4(r_1 r_2^2 r_3^3, d)}{\N(d)^{\frac{1}{2}+s}} 
	\; \G_{\beta d}(s,r_{1}r_{2}dr_{3}|r_{1}r_{2}^{2}d^{2}r_{3}^{3},1).\]
\end{lemma}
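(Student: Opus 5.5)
The plan is to remove the coprimality condition $(c,r)=1$ only at primes $\pi\mid r_4^*$, by Möbius inversion, and then pull the divisor out of the Gauss sum. These are exactly the primes where the coprimality matters in a controlled way.

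First, I would check that for primes dividing $r_1r_2r_3$, the condition $(c,\pi)=1$ can be kept. It stays visible in the parameter $r_1r_2dr_3$ on the right-hand side, since that parameter has the same prime support as $r_1 r_2 r_3 d$. Note that $\mathrm{rad}(r)$ is $\mathrm{rad}(r_1r_2r_3)\,r_4^*$. So the condition $(c,r)=1$ is equivalent to $(c,r_1r_2r_3)=1$ together with $(c,r_4^*)=1$.

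Next, I would detect $(c,r_4^*)=1$ by $\sum_{d\mid (c,r_4^*)}\mu(d)$, with $d$ primary. This gives
\[
\G_\beta(s,r|r,1)=\sum_{d\mid r_4^*}\mu(d)\sum_{\substack{c\equiv\beta \bmod 4\\ (c,r_1r_2r_3)=1,\ d\mid c}}\frac{g_4(r,c)}{\N(c)^{1/2+s}}\frac{\overline c}{|c|}.
\]

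The third step is to simplify $g_4(r,c)$. Since $r = r_1r_2^2r_3^3\cdot r_4^4$, I would like $g_4(r,c)=g_4(r_1r_2^2r_3^3,c)$ whenever $(c,r_1r_2r_3)=1$, because the factor $r_4^4$ should act trivially.
\begin{itemize}
\item At primes $\pi\nmid r_4$, this is \eqref{rel1}, using $\quartic{r_4^4}{c}=1$.
\item At primes $\pi\mid r_4^*$ with $\pi\mid c$, I would use \eqref{rel2} to factor $c$ into $\pi^k$ and a coprime part, and then compare the values in \eqref{rel4}.
\end{itemize}
The comparison at these primes is the main obstacle. The value of $g_4(\pi^n,\pi^k)$ genuinely depends on $n$: for $k=1$, $n\ge 4$ gives $0$ while $n=0$ gives $g_4(1,\pi)$. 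So the reduction must instead be organized so that only the factor $d$ with $(d, r_1r_2r_3)=1$ is extracted.

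For the extraction, write $c=dc'$. Since $d$ is square-free, only square-free $c$ contribute; more precisely, the terms with $\pi^k\parallel c$, $k\ge2$, are handled by \eqref{rel4}. Then \eqref{rel2} gives
\[
g_4(\cdot,dc')=(-1)^{C(d,c')}g_4(\cdot,d)\,g_4(d^2\cdot,c').
\]
I would also check that $C(d,c')$ matches $C(\beta d,d)$ using $c'\equiv\beta d \bmod 4$. The remaining Gauss sum over $c'$ has argument $d^2 r_1r_2^2r_3^3$. It should have coprimality to $r_1r_2r_3d$, matching the notation $\G_{\beta d}(s,r_1r_2dr_3|r_1r_2^2d^2r_3^3,1)$.

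The delicate point is justifying that, at primes $\pi\mid r_4^*$, the term $d$ should carry the Gauss sum $g_4(r_1r_2^2r_3^3,d)$ rather than $g_4(r,d)$. This requires checking each case of \eqref{rel4} at $\pi\mid r_4$, $\pi\nmid c'$. I would first try to verify directly from \eqref{rel4} that the exponent $4$ in $r_4^4$ leaves the relevant values invariant, treating $k\equiv 0\bmod 4$ separately.
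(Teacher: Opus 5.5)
There is a genuine gap, and it lies in the order of operations. In your second step you apply Möbius inversion to $(c,r_4^*)=1$ while keeping $g_4(r,c)$ in the summand. For $d\neq 1$ the inner sum then runs over $c$ with $d\mid c$ and $d\mid r_4$, so over $c$ \emph{not} coprime to $r$. On those $c$, $g_4(r,c)$ is genuinely different from $g_4(r_1r_2^2r_3^3,c)$. For $\pi\mid d$ one has $g_4(r,\pi)=0$, and non-square-free $c$ do contribute. For instance, take $r=\pi^4$ and $d=\pi$. Then \eqref{rel4} gives $g_4(\pi^4,\pi^4)=\varphi(\pi^4)\neq 0$ and $g_4(\pi^4,\pi^5)=\N(\pi)^4g_4(1,\pi)\neq 0$. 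So your inner sum is supported on $\pi^4\parallel c$ and $\pi^5\parallel c$, whereas the lemma's $d=\pi$ term is supported on $\pi\parallel c$.

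Consequently, the claim ``only square-free $c$ contribute'' is false in your setup. Your inner sums are not the inner sums of the lemma; only their $\mu$-weighted totals agree. The proposed check that ``the exponent $4$ in $r_4^4$ leaves the relevant values invariant'' also cannot succeed. Your own $k=1$ example already shows the values are not invariant.

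The missing idea is to make the replacement first, on the original series. Every $c$ in $\G_\beta(s,r|r,1)$ satisfies $(c,r)=1$, so \eqref{rel1} with $\oquartic{r_4^4}{c}=1$ gives $g_4(r,c)=g_4(r_1r_2^2r_3^3,c)$ for all terms. Only after this should you rewrite $(c,r)=1$ as $(c,r_1r_2r_3)=1$ together with $\sum_{d\mid (c,r_4^*)}\mu(d)$.

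Now the summand is $g_4(r_1r_2^2r_3^3,c)$ with $(c,r_1r_2^2r_3^3)=1$. By \eqref{gauss_sum_bound}, only square-free $c=dc'$ with $(d,c')=1$ survive. Then \eqref{rel2} yields $(-1)^{C(d,c')}g_4(r_1r_2^2r_3^3,d)\,g_4(d^2r_1r_2^2r_3^3,c')$, and $C(d,c')=C(\beta d,d)$ because $c'\equiv \beta d\bmod 4$. The conditions $(c',r_1r_2r_3)=1$ and $(c',d)=1$ combine into the coprimality condition in $\G_{\beta d}(s,r_1r_2dr_3|r_1r_2^2d^2r_3^3,1)$.

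With this reordering, the rest of your outline (the factorization, the sign check, and the identification of the remaining series) goes through. No case analysis of \eqref{rel4} at primes of $r_4^*$ is needed.
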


\begin{proof}
	For all $(c,r)=1$, it follows, upon applying the change of variables $a'=ar_{4}^{4}$, that
	\begin{align*}
		g_{4}(r, c) = g_{4}(r_1 r_2^2 r_3^3 r_4^4, c) 
		&= \sum_{a'  \bmod c} \quartic{a'}{c} \quartic{r_4}{c}^{-4} e\bigg(\Tr\bigg( \frac{r_1 r_2^2 r_3^3 a'}{c}\bigg)\bigg) 
		= g_{4}(r_1 r_2^2 r_3^3, c).
	\end{align*}
	As above, it again follows from \eqref{rel2}, and upon noting that $d \equiv d^{-1} \bmod{4}$ whenever $d$ is primary, that for $\re(s)> 1$,
	\begin{align*}
		\G_{\beta}(s,r|r,1) 
		&= \sum_{\substack{c \equiv \beta \bmod 4\\ (r, c)=1}}  \frac{\overline{c}}{|c|} \; \frac{g_4(r , c)}{\N(c)^{\frac{1}{2}+s}}  
		=  \sum_{\substack{c \equiv \beta \bmod 4\\ ( r_1 r_2  r_3, c)=1}}  \frac{\overline{c}}{|c|} \; \frac{g_4(r_1 r_2^2 r_3^3 , c)}{\N(c)^{\frac{1}{2}+s}} \sum_{\substack{d \mid (c,r_4^*)\\d \textnormal{ primary}}} \mu(d) \\
		&= \sum_{\substack{ d \mid r_4^{*}\\d \textnormal{ primary}}} \mu(d)  \hecke{d}  \sum_{\substack{c' \equiv \beta d \bmod 4\\ ( r_1 r_2 r_3, d c')=1\\ (d, c')=1}}  \hecke{c'} \frac{g_4(r_1 r_2^2 r_3^3 , c' d)}{\N(c'd)^{\frac{1}{2}+s}}  \\
		&= \sum_{\substack{d \mid r_4^{*}\\d \textnormal{ primary}}} \mu(d)  (-1)^{C(\beta d, d)} \hecke{d} \frac{g_4(r_1 r_2^2 r_3^3, d)}{\N(d)^{\frac{1}{2}+s}} \sum_{\substack{c \equiv \beta d \bmod 4\\ ( r_1 r_2 r_3 d, c )=1}} \hecke{c} \frac{g_4(r_1 (r_2 d)^2 r_3^3 , c)}{\N(c)^{\frac{1}{2}+s}},
	\end{align*}
	as desired.
\end{proof}

We now deal with the coprimality condition for smaller powers. 

\begin{lemma} 
	Let $\beta \in \{1,1+\lambda^3\}$, 
	and fix $r  = r_1 r_2^2 r_3^3  \in \Z[i]$, with $r_1, r_2, r_3 \in \Z[i]$ square-free and coprime.
	Then for $\textnormal{Re}(s)>1$, we have
	\begin{align}
		\G_{\beta}(s, r_1 r_2^2 r_3^3|r,1) 
		&= \prod_{\pi \mid r_1} \left( 1 +  \frac{\overline{\pi^2}}{|\pi^2|} g_2 \left( \frac{r}{\pi} , \pi \right)  \N(\pi)^{-2s} \right)^{-1} \G_{\beta}(s, r_2^2 r_3^3|r,1)\label{linear_part_psi}  \\
		\G_{\beta}(s, r_1 r_2^2 r_3^3|r,1) &= \prod_{\pi \mid r_3}
		\left( 1 -  \frac{\overline{\pi^4}}{|\pi^4|}  \N(\pi)^{1-4s} \right)^{-1} \G_{\beta} (s,  r_1 r_2^2|r,1) \label{cube_part_psi}
	\end{align}
	and
	\begin{align}\label{square_part_psi} 
		&\G_{\beta} (s,  r_1 r_2^2 r_3^3|r,1) = \prod_{\pi \mid r_2} \left( 1 -  
		\frac{\overline{\pi^4}}{|\pi^4|}   \N(\pi)^{1-4s} \right)^{-1} \\
		& \times \sum_{\delta \mid r_2} \mu(\delta) (-1)^{C(\delta, \delta\beta)} \quartic{-1}{\delta}   \frac{\overline{\delta}^3}{|\delta|^3} \overline{g_4 \left( \frac{r}{\delta^2}, \delta \right)} \N(\delta)^{\frac12-3s} 
		\G_{\delta\beta} \left( s, r_1 r_3^3|\tfrac{r}{\delta^2},1 \right) \nonumber
	\end{align}
\end{lemma}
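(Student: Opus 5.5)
The plan is to treat one prime $\pi$ at a time and induct on the number of prime factors of $r_1$, $r_3$, $r_2$ respectively. For a single prime $\pi \mid r$ (primary, $\pi \nmid \lambda$), I split the sum defining $\G_{\beta}(s, v\pi|r,1)$ versus $\G_\beta(s,v|r,1)$ according to the exact power $\pi^j \parallel c$. Terms with $(c,\pi)=1$ give exactly the sum with the coprimality condition at $\pi$. For the others, write $c=\pi^j c'$ with $(c',\pi)=1$. Then use the twisted multiplicativity \eqref{rel2} together with \eqref{rel1} and \eqref{rel4}. Here \eqref{rel2} is extended to the non-squarefree, non-coprime case by the remark after \eqref{rel4}, since both sides vanish otherwise. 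The result is
\[
g_4(r,\pi^j c') = (-1)^{C(\pi^j,c')}\,\quartic{c'}{\pi^j}\quartic{\pi^j}{c'}\, g_4(r,\pi^j)\,g_4(r,c'),
\]
and $\pi^j \equiv 1$ or $\pi^{j}\beta'$ mod $4$ fixes the residue class of $c'$. By \eqref{rel4}, $g_4(r,\pi^j)$ is nonzero only when $j = \mathrm{ord}_\pi(r)+1$, or when $4\mid j$ and $j\le \mathrm{ord}_\pi(r)$. This is what produces the finitely many Euler-factor-like terms.

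\textbf{Case $\pi \mid r_1$ (\eqref{linear_part_psi}).} Here $\mathrm{ord}_\pi r=1$, so only $j=2$ contributes. We have $g_4(r,\pi^2)=\N(\pi)\,g_2(r/\pi,\pi)$ from \eqref{rel4} (after pulling out $\overline{(r/\pi)}$ via \eqref{rel1}, which combines into $g_2(r/\pi,\pi)$). The factor $\quartic{c'}{\pi^2}\quartic{\pi^2}{c'}$ equals $\left(\frac{c'}{\pi}\right)_2\left(\frac{\pi}{c'}\right)_2$, which by quadratic reciprocity is $1$ for primary elements; the sign $(-1)^{C(\pi^2,c')}=1$ since $\N(\pi^2)\equiv1\bmod 8$ up to the needed parity. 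Also $\pi^2\equiv 1 \bmod 4$, so $c'\equiv\beta$. One then gets
\[
\G_\beta(s,v|r,1) = \G_\beta(s,v\pi|r,1) + \hecke{\pi^2}\, \frac{\N(\pi)g_2(r/\pi,\pi)}{\N(\pi)^{1+2s}}\,\G_\beta(s,v\pi|r,1)\cdot(\text{twist}).
\]
The key check is that $g_4(r,c')$ is unchanged, which holds because $g_4(r,c')$ with $(c',\pi)=1$ only sees $r$. I therefore need $\quartic{\pi^2}{c'}$ to cancel against the $\overline{(\pi^2/c')}$-type factor from \eqref{rel1}, if any. I will carefully track that the product is $1$, giving the Euler factor $\bigl(1+\hecke{\pi^2} g_2(r/\pi,\pi)\N(\pi)^{-2s}\bigr)$. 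Iterating over $\pi\mid r_1$ and inverting gives \eqref{linear_part_psi}.

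\textbf{Case $\pi\mid r_3$ (\eqref{cube_part_psi}).} Here only $j=4$ contributes (since $j=4=\mathrm{ord}+1$ with $k\equiv0$). Then $g_4(r,\pi^4)=-\N(\pi)^3$, the reciprocity factors are $\quartic{c'}{\pi}^4=1$, and the sign is trivial. This gives the factor $1-\hecke{\pi^4}\N(\pi)^{3-2-4s}$, i.e.\ $\N(\pi)^{1-4s}$ after the normalization $\N(\pi^4)^{1/2+s}$.

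\textbf{Case $\pi\mid r_2$ (\eqref{square_part_psi}).} This is the main obstacle. Now $j=3$ contributes via $\N(\pi)^2\quartic{-1}{\pi}\overline{g_4(1,\pi)}$ twisted by $r/\pi^2$, giving $\overline{g_4(r/\pi^2,\pi)}$. The $j=3$ term changes the residue class ($\pi^3\equiv\pi \bmod 4$), introduces the nontrivial sign $(-1)^{C(\pi,\pi\beta)}$, and leaves a genuine quartic twist $\quartic{c'}{\pi}^3\quartic{\pi}{c'}^3$. By \eqref{rel1}, that twist converts $g_4(r,c')$ into $g_4(r/\pi^2,c')$, which is why the right side involves $\G_{\delta\beta}(s,\cdot\,|\,r/\delta^2,1)$. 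The $j=4$ term again gives the Euler factor $1-\hecke{\pi^4}\N(\pi)^{1-4s}$. So for one prime,
\[
\G_\beta(s,v\pi|r,1)\bigl(1-\hecke{\pi^4}\N(\pi)^{1-4s}\bigr)=\G_\beta(s,v|r,1)-(\text{$j=3$ term}),
\]
where the $j=3$ term has $(c',\pi)=1$ built in, and the coprimality at $\pi$ is then automatic. Iterating over the primes of $r_2$ expands into the Möbius sum over $\delta\mid r_2$. The delicate points are bookkeeping of the signs $C(\cdot,\cdot)$ under products (using that $C$ depends only on classes mod $4$ and $\delta\equiv\delta^{-1}$ mod $4$), and checking that the $v$-coprimality conditions for the remaining primes of $r_2$ survive. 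That last point uses that $\mathrm{ord}_{\pi'}(r/\delta^2)$ drops to $0$ for $\pi'\mid\delta$, so those primes then fall under the trivial case.
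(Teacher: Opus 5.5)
Your treatment of \eqref{linear_part_psi} and \eqref{cube_part_psi} is the paper's argument. You split off the terms with $\pi^j\parallel c$, note that only $j=2$ (resp.\ $j=4$) survives by \eqref{rel4}, observe that the remaining series is again $\G_\beta(s,v\pi|r,1)$, then solve and induct. One small correction: the first line of \eqref{rel2} carries no sign $(-1)^{C(\pi^j,c')}$. That sign belongs to the second line, together with $g_4(\pi^{2j}r,c')$. It is harmless for even $j$.

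For \eqref{square_part_psi} the paper just cites Lemma~8.1 of \cite{DDHL}, and your direct argument has a genuine gap. When $\mathrm{ord}_\pi(r)=2$ there is no $j=4$ term: $g_4(\pi^2,\pi^4)=0$ by \eqref{rel4}, by the very rule you stated. So the splitting at $\pi$ gives, with no Euler factor,
\[
\G_\beta(s,v\pi|r,1)=\G_\beta(s,v|r,1)-K_3\,\G^{*},\qquad K_3:=(-1)^{C(\pi,\pi\beta)}\hecke{\pi^3}\,\frac{g_4(r,\pi^3)}{\N(\pi)^{\frac32+3s}},
\]
where $\G^{*}$ is the sum of $\hecke{c'}\,g_4(r/\pi^2,c')\,\N(c')^{-\frac12-s}$ over $c'\equiv\pi\beta\bmod 4$ with $(c',v\pi)=1$. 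Your one-prime identity puts $1-\hecke{\pi^4}\N(\pi)^{1-4s}$ on the left while keeping $(c',\pi)=1$ inside the $j=3$ term. That is not what the splitting gives. Nor is the coprimality at $\pi$ ``automatic'' or ``trivial'' once $\mathrm{ord}_\pi$ drops to $0$. Since $\pi\nmid r/\pi^2$, we have $g_4(r/\pi^2,\pi)\neq 0$, and the series $\G_{\pi\beta}(s,v|r/\pi^2,1)$ in \eqref{square_part_psi} does not impose $(c,\pi)=1$.

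The missing step is to remove that condition. In $\G_{\pi\beta}(s,v|r/\pi^2,1)$ only $j=1$ survives at $\pi$. Because $\pi^2\cdot(r/\pi^2)=r$, \eqref{rel2} and \eqref{rel1} show that these terms equal $K_1\,\G_\beta(s,v\pi|r,1)$, with $K_1:=(-1)^{C(\pi,\beta)}\hecke{\pi}\,g_4(r/\pi^2,\pi)\,\N(\pi)^{-\frac12-s}$. In other words, they feed back the unknown series. Substituting $\G^{*}=\G_{\pi\beta}(s,v|r/\pi^2,1)-K_1\G_\beta(s,v\pi|r,1)$ gives
\[
(1-K_1K_3)\,\G_\beta(s,v\pi|r,1)=\G_\beta(s,v|r,1)-K_3\,\G_{\pi\beta}(s,v|r/\pi^2,1).
\]
The Euler factor comes from $K_1K_3$. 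Using $g_4(r,\pi^3)=\N(\pi)^2\quartic{-1}{\pi}\overline{g_4(r/\pi^2,\pi)}$, $|g_4(r/\pi^2,\pi)|^2=\N(\pi)$ and $\quartic{-1}{\pi}=(-1)^{C(\pi,\beta)+C(\pi,\pi\beta)}$, one finds $K_1K_3=\hecke{\pi^4}\N(\pi)^{1-4s}$. Your final formula has the right shape only because the fictitious $j=4$ term and the dropped coprimality compensate each other. This feedback has to be carried through every step of the induction over the primes of $r_2$.

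The signs at $j=3$ work the same way. The sign $(-1)^{C(\pi,\pi\beta)}$ comes from the twist itself, $\quartic{c'}{\pi}^3\quartic{\pi}{c'}^3g_4(r,c')=(-1)^{C(\pi,c')}g_4(r/\pi^2,c')$, not from an extra prefactor. Your two sign slips cancel there as well.
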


\begin{proof} 
	Let $\pi \mid r_1$ be primary, and write $r = r' \pi$, with $(r', \pi)=1$. Let $r_0 \mid r'$.  By \eqref{rel1}, \eqref{rel2}, and \eqref{rel4}, and upon noting that $\pi^{2}\equiv 1 \bmod 4$, we then find  that
	\begin{align*}
		\G_{\beta}(s,  r_0\pi|r' \pi,1) &= \sum_{\substack{c \equiv \beta \bmod 4\\ (r_0, c)=1}}   \frac{\overline{c}}{|c|} \; \frac{g_4(r' \pi, c)}{\N(c)^{\frac{1}{2}+s}} -\sum_{\ell \geq 1}\sum_{\substack{c'\pi^{\ell} \equiv \beta \bmod 4\\ (c',r_0\pi )=1}}   \frac{\overline{c'\pi^{\ell}}}{|c'\pi^{\ell}|} \; \frac{g_4(r' \pi, c'\pi^{\ell})}{\N(c'\pi^{\ell})^{\frac{1}{2}+s}} \\
		&= \sum_{\substack{c \equiv \beta \bmod 4\\ (r_0, c)=1}}   \frac{\overline{c}}{|c|} \; \frac{g_4(r' \pi, c)}{\N(c)^{\frac{1}{2}+s}} -\sum_{\substack{c' \equiv \beta \bmod 4\\ (c', r_0 \pi)=1}}   \frac{\overline{c' \pi^2}}{|c' \pi^2|} \; \frac{g_4(r' \pi, c' \pi^2)}{\N(c' \pi^2)^{\frac{1}{2}+s}}  \\
		&= \G_{\beta}(s,  r_0|r,1) -\frac{\overline{\pi^2}}{|\pi^2|}\frac{g_4(r' \pi, \pi^2)}{\N(\pi)^{1+2s}}\sum_{\substack{c' \equiv \beta \bmod 4\\ (c', r_0 \pi)=1}}   \frac{\overline{c'}}{|c'|} \; \frac{g_4(r' \pi^5,c')}{\N(c')^{\frac{1}{2}+s}}.
	\end{align*} 
	By \eqref{rel1} we note that $g_4(r' \pi^5,c') = g_4(r' \pi,c')$ as $(c',\pi)=1$ and by~\eqref{rel4} we have
	\[g_{4}(r'\pi,\pi^{2}) = \oquartic{r'}{\pi^{2}}g_{4}(\pi,\pi^{2}) 
	= \left(\frac{r'}{\pi}\right)_{2}g_{2}(1,\pi)\N(\pi) = g_{2}(r',\pi)\N(\pi).\]
	It further follows that
	\begin{align*}
		\G_{\beta}(s, r_0\pi|r,1) \left( 1 +  \frac{\overline{\pi^2}}{|\pi^2|} g_2 \left( \frac{r}{\pi} , \pi \right)  \N(\pi)^{-2s} \right)
		&= \G_{\beta}(s, r_0|r,1) ,
	\end{align*}
	so that \eqref{linear_part_psi} now follows by induction on the primes dividing $r_1$.

	Let $\pi \mid r_3$ and write $r = r' \pi^3$, with $(r', \pi)=1$. Let $r_0 \mid r'$. 
	Then, using~\eqref{rel2} and~\eqref{rel4} we write
	\[g_{4}(r'\pi^{3},c'\pi^{\ell}) = (-1)^{C(c',\pi^{\ell})}g_{4}(r'\pi^{3},\pi^{\ell})g_{4}(r'\pi^{3+2\ell},c')= 
	\begin{cases}
		0 & \textnormal{ if } \ell \neq 4\\
		-g_{4}(r'\pi^{3},c')\N(\pi)^{3} & \textnormal{ if } \ell = 4.
	\end{cases}  \]
	We deduce
	\begin{align*}
		\G_{\beta}(s, r_0\pi|r' \pi^3,1) 
		&= \sum_{\substack{c \equiv \beta \bmod 4\\ (r_0, c)=1}}   \frac{\overline{c}}{|c|} \; \frac{g_4(r' \pi^{3}, c)}{\N(c)^{\frac{1}{2}+s}} -\sum_{\ell \geq 1}\sum_{\substack{c'\pi^{\ell} \equiv \beta \bmod 4\\ (c',r_0\pi )=1}}   \frac{\overline{c'\pi^{\ell}}}{|c'\pi^{\ell}|} \; \frac{g_4(r' \pi^{3}, c'\pi^{\ell})}{\N(c'\pi^{\ell})^{\frac{1}{2}+s}} \\
		&= \G_{\beta}(s, r_0|r,1) + \frac{\overline{ \pi^4}}{|\pi^4|}\N(\pi)^{1-4s} \sum_{\substack{c' \equiv \beta \bmod 4 \\ (c', r_0 \pi)=1}}   \frac{\overline{c'}}{|c' |} \; \frac{g_4(r' \pi^3, c' )}{\N(c')^{\frac{1}{2}+s}}. 
	\end{align*}
	This gives
	\begin{align*}
		\G_{\beta}(s, r_0\pi|r,1)
		\left( 1 -  \frac{\overline{\pi^4}}{|\pi^4|}  \N(\pi)^{1-4s} \right) &=
		\G_{\beta} (s, r_0|r,1) 
	\end{align*}
	and we get~\eqref{cube_part_psi} by induction on the primes dividing $r_3$.

	Finally, \eqref{square_part_psi} is given by \cite[Lemma 8.1~(8.4)]{DDHL}, in the case $\ell =1$, upon making the change of variables $d \mapsto \delta$, $\nu \mapsto r$, $\alpha \mapsto r_{2} $,  $s\mapsto s+\frac12$.
\end{proof}

Combining the previous lemmas, we are finally able to obtain the desired expression for the Dirichlet series $\G_{\beta}(s,r|r; a)$.

\begin{proposition}\label{prop_phiLC} 
	Let $\beta \in \{1,1+\lambda^3\}$, and fix $r \in \Z[i]$.  
	We write $r = r_1 r_2^2 r_3^3 r_4^4$, where $r_1, r_2, r_3 \in \Z[i]$ are square-free and coprime,
	and let $r_4^*$ denote the product of the primes $\pi \in \Z[i]$ such that  $\pi \mid r_4$, but $\pi \nmid r_1 r_2 r_3$. 
	Let $a \in \Z[i]$ be primary, square-free and coprime to~$r$.  
	Then $\G_{\beta}(s,r|r, a)$ is analytic for $\textnormal{Re}(s)> \frac12$, and
	\begin{equation}
		\G_{\beta}(s,r|r, a) = \sum_{\substack{d \mid r_4^{*}\\d \textnormal{ primary}}}\sum_{\delta \mid a d r_2}F_{\beta}(s;a,d,\delta,r)\G_{\delta \beta a d} \left( s, \frac{r_{1}a^2d^{2}r_{2}^{2}r_{3}^{3}}{\delta^2}\right),
	\end{equation}
	where
	\begin{align*}
		F_{\beta}(s;a,d,\delta,r)
		:= &(-1)^{C(a, a \beta)} \frac{\overline{a}}{|a|}  \frac{g_4(r, a)}{\N(a)^{\frac{1}{2}}} \mu(d)  (-1)^{C(a\beta d, d)}   \hecke{d} \frac{g_4(r_1 a^2r_2^2 r_3^3, d)}{\N(d)^{\frac{1}{2}}}\\
		& \times \prod_{\pi \mid r_1} \bigg ( 1 +  \frac{\overline{\pi^2}}{|\pi^2|} \frac{g_2 \big( \frac{r_{1}r_{2}^{2}a^2d^{2}r_{3}^{3}}{\pi} , \pi \big)}{\N(\pi)^{\tfrac{1}{2}}}  \N(\pi)^{\tfrac{1}{2}-2s} \bigg)^{-1}
		\prod_{\pi \mid adr_2r_3} \bigg( 1 -  \frac{\overline{\pi^4}}{|\pi^4|}  \N(\pi)^{1-4s} \bigg)^{-1}\\
		&\times \mu(\delta) (-1)^{C(\delta, \delta \beta a d)} \quartic{-1}{\delta}  \frac{\overline{\delta}^3}{|\delta|^3} \frac{\overline{g_4 \big( \frac{r_{1}a^2d^{2}r_{2}^{2}r_{3}^{3}}{\delta^2}, \delta \big)}}{\N(\delta)^{\frac{1}{2}}}   \N(ad)^{-s}\N(\delta)^{1-3s}.
	\end{align*}
\end{proposition}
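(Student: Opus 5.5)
The plan is to compose the three reduction lemmas in order: first remove the divisibility condition $a\mid c$, then the fourth-power part of the coprimality condition, and finally the coprimality conditions attached to $r_1$, $r_3$ and $r_2$. Analyticity for $\re(s)>\frac12$ will then follow from the analytic properties of the series $\G_{\beta'}(s,r')$ on the right-hand side. All identities will be established in $\re(s)>1$, where everything converges absolutely, and then extended by continuation.

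\textbf{Step 1.} Apply Lemma~\ref{lem psi remove divisibility}. Since $a$ is primary, square-free and coprime to $r$, it gives
\[
\G_\beta(s,r|r,a)=(-1)^{C(a,a\beta)}\frac{\overline a}{|a|}\frac{g_4(r,a)}{\N(a)^{\frac12+s}}\,\G_{a\beta}(s,ar|a^2r,1).
\]
Write the new twisting argument as $r'=a^2r=r_1(ar_2)^2r_3^3r_4^4$. Here $ar_2$ is square-free and coprime to $r_1r_3$, so $r'_1=r_1$, $r'_2=ar_2$, $r'_3=r_3$, and $r'_4{}^*=r_4^*$ is unchanged because $(a,r)=1$. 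The coprimality set $ar$ has the same prime support as $r'$.

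\textbf{Step 2.} Apply Lemma~\ref{lem psi remove fourth power coprim} to $r'$, with the residue class $a\beta$. This produces the $d$-sum over $d\mid r_4^*$, with weights $\mu(d)(-1)^{C(a\beta d,d)}\frac{\overline d}{|d|}\,g_4(r_1a^2r_2^2r_3^3,d)\,\N(d)^{-\frac12-s}$. The resulting series is
\[
\G_{a\beta d}\bigl(s,\,r_1(adr_2)r_3\,\big|\,r_1(adr_2)^2r_3^3,\,1\bigr),
\]
whose argument is now fourth-power free. Its square part is $adr_2$ and its cube part is $r_3$.

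\textbf{Step 3.} Strip the remaining coprimality conditions using the three identities of the preceding lemma.

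First use \eqref{linear_part_psi} to drop the primes of $r_1$. This contributes the product over $\pi\mid r_1$, after rewriting $g_2(\cdot,\pi)\N(\pi)^{-2s}=\frac{g_2}{\N(\pi)^{1/2}}\N(\pi)^{\frac12-2s}$.

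Next use \eqref{cube_part_psi} to drop the primes of $r_3$, which gives the product $\prod_{\pi\mid r_3}(1-\cdots)^{-1}$.

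Finally use \eqref{square_part_psi} for the square part $adr_2$. This gives the product over $\pi\mid adr_2$ together with the $\delta$-sum over $\delta\mid adr_2$. After this step the coprimality set $r_1r_3^3$ is again to be removed. I would simply order the three removals so that \eqref{square_part_psi} is applied first. Alternatively, one checks that the cube and linear identities hold for an arbitrary coprimality divisor $r_0$, which is exactly how they were proved.

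Because $\delta$ is square-free, the argument after the $\delta$-step is $r_1a^2d^2r_2^2r_3^3/\delta^2$, and the residue class becomes $\delta\beta ad$. Collecting the normalizations, the powers of $\N$ combine as
\[
\N(a)^{-\frac12-s}\,\N(d)^{-\frac12-s}\,\N(\delta)^{\frac12-3s}=\frac{1}{\N(a)^{\frac12}\N(d)^{\frac12}\N(\delta)^{\frac12}}\,\N(ad)^{-s}\,\N(\delta)^{1-3s},
\]
which matches $F_\beta$.

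\textbf{Main obstacle.} The delicate point is the bookkeeping in Step 3, and there are two things to check. The first is that the products over $\pi\mid r_1$ involve $g_2$ evaluated at the current argument $r_1a^2d^2r_2^2r_3^3/\pi$ rather than at $r/\pi$. The second is that the Gauss sums inside the $\delta$-terms use the argument after division by $\delta^2$, and that the linear-prime product is computed before or after the $\delta$ change consistently. Since $\delta\mid adr_2$ is coprime to $r_1$, the Legendre symbol $\bigl(\frac{\delta^2}{\pi}\bigr)_2=1$, so the $g_2$ factor is unaffected by this choice. I would verify this explicitly.

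\textbf{Analyticity.} The factors $(1-\overline{\pi^4}|\pi|^{-4}\N(\pi)^{1-4s})^{-1}$ are analytic and nonvanishing for $\re(s)>\frac14$. The $r_1$-factors satisfy $|g_2|\le\N(\pi)^{1/2}$, so $1+\ldots$ cannot vanish once $\re(s)>\frac14$. For the series $\G_{\beta'}(s,r')$ I would invoke their known meromorphic continuation, as in \cite{DDHL}. For this character twist ($\ell=1$, nontrivial Hecke character $\overline c/|c|$) they are holomorphic for $\re(s)>\frac12$, with the possible pole at $s=\frac34$ appearing only when $\ell=0$. This gives the claimed analyticity.
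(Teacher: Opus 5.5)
Your proposal is correct and follows the paper's proof. It applies Lemma~\ref{lem psi remove divisibility}, then Lemma~\ref{lem psi remove fourth power coprim}, then the identities \eqref{linear_part_psi}, \eqref{cube_part_psi} and \eqref{square_part_psi}, and reads off analyticity for $\re(s)>\frac12$ from $F_\beta$ and the continuation of $\G_\beta(s,r)$ when $\ell=1$.

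The only difference is the order of the last step. The paper strips $r_1$ and $r_3$ first and then uses \eqref{square_part_psi} with coprimality only to $adr_2$. You apply \eqref{square_part_psi} first, which matches its stated hypothesis, and then use the general-$r_0$ forms of the other two identities, correctly noting that $\bigl(\frac{\delta^2}{\pi}\bigr)_2=1$ leaves the $g_2$ factor unchanged.
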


\begin{proof}
	Lemma~\ref{lem psi remove divisibility} and Lemma~\ref{lem psi remove fourth power coprim} yield
	\begin{align}\notag
		\G_{\beta}(s, r|r, a)  
		&=  (-1)^{C(a, a \beta)}  \frac{\overline{a}}{|a|}  \frac{g_4(r, a)}{\N(a)^{\frac{1}{2}+s}} \G_{a\beta}(s,ar|a^{2}r,1) \\
		&=   (-1)^{C(a, a \beta)}   \frac{\overline{a}}{|a|}  \frac{g_4(r, a)}{\N(a)^{\frac{1}{2}+s}}  \\
		&\times\sum_{\substack{d \mid r_4^{*}\\d \textnormal{ primary}}} \mu(d)  (-1)^{C(a\beta d, d)}  \hecke{d} \frac{g_4(r_1 a^2r_2^2 r_3^3, d)}{\N(d)^{\frac{1}{2}+s}} \cdot \G_{a\beta d}(s,r_{1}r_{2}r_{3}ad|r_{1}r_{2}^{2}a^2d^{2}r_{3}^{3},1)
	\end{align}
	where $r_1, a d r_2$ and $r_3$ are square-free and coprime.
	The proof is completed upon noting that by \eqref{linear_part_psi},
	\begin{multline*}
		\G_{\beta ad}(s,r_{1}r_{2}adr_{3}|r_{1}r_{2}^{2}a^2d^{2}r_{3}^{3},1) \\
		= \prod_{\pi \mid r_1} \left( 1 +  \frac{\overline{\pi^2}}{|\pi^2|} g_2 \left( \frac{r_{1}r_{2}^{2}a^2d^{2}r_{3}^{3}}{\pi} , \pi \right)  \N(\pi)^{-2s} \right)^{-1}  \G_{\beta a d}(s, r_{2}adr_3|r_{1}a^2d^{2}r_{2}^{2}r_{3}^{3},1)
	 \end{multline*}
	by \eqref{cube_part_psi},
	\[ \G_{\beta a d}(s, adr_{2}r_3|r_{1}r_{2}^{2}a^2d^{2}r_{3}^{3},1) = \prod_{\pi \mid r_3}
	\left( 1 - \frac{\overline{\pi^4}}{|\pi^4|}  \N(\pi)^{1-4s} \right)^{-1}
	\G_{\beta a d} (s, ad r_2|r_{1}a^2d^{2}r_{2}^{2}r_{3}^{3},1),\]
	and by \eqref{square_part_psi}, 
	\begin{align*} 
		&\G_{\beta a d} (s, a d r_2|r_{1}a^2d^{2}r_{2}^{2}r_{3}^{3},1) = \prod_{\pi \mid a d r_2} \left( 1 - 
		\frac{\overline{\pi}^4}{|\pi^4|}  \N(\pi)^{1-4s} \right)^{-1} \\
		& \times \sum_{\delta \mid a d r_2} \mu(\delta) (-1)^{C(\delta, \delta \beta a d)} \quartic{-1}{\delta}  \frac{\overline{\delta}^3}{|\delta|^3} \overline{g_4 \left( \frac{r_{1}a^2d^{2}r_{2}^{2}r_{3}^{3}}{\delta^2}, \delta \right)} \N(\delta)^{\frac12-3s} 
		\G_{\delta \beta a d} \left( s, \frac{r_{1}a^2d^{2}r_{2}^{2}r_{3}^{3}}{\delta^2}\right).
	\end{align*}
	It remains to justify that $\G_{\beta}(s,r|r, a)$ is 
	c for $\re(s) > \frac12$, which follows from the fact that
	$F_{\beta}(s;a,d,\delta,r)$ and $\G_\beta(s,r)$ are analytic for 
	$\re(s) > \tfrac{1}{2}$ as will be seen in Section~\ref{section-LOA}.
\end{proof}

We deduce a bound for the Dirichlet series of Proposition~\ref{PropositionH} in terms of the natural Dirichlet series for Gauss sums.

\begin{corollary}\label{CorLindelof} 
	Let $\beta \in \{1,1+\lambda^3\}$,  $r \in \Z[i]$ and $s=\sigma + it$ with $\sigma> \tfrac{1}{2}$. 
	Let $a \in \Z[i]$ be primary, square-free and coprime to~$r$. 
	Then for any $\varepsilon > 0$, we have
	\begin{align*}
		|\G_{\beta}(s,r|r, a)| &\ll_{\sigma, \varepsilon} \frac{\N(ar)^\varepsilon}{\N(a)^{\sigma}} \sum_{\substack{d \mid r_4^{*}\\d \textnormal{ primary}}}\sum_{\delta \mid a d r_2}
		\N(d)^{-\sigma+\varepsilon}\N(\delta)^{1-3\sigma}\left\lvert \G_{\delta \beta a d} \left( s, \frac{r_{1}a^2d^{2}r_{2}^{2}r_{3}^{3}}{\delta^2}\right)\right\rvert .
	\end{align*}
\end{corollary}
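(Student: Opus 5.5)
We take absolute values in the identity of Proposition~\ref{prop_phiLC} and apply the triangle inequality. This gives
\[
|\G_{\beta}(s,r|r,a)| \le \sum_{d\mid r_4^*}\sum_{\delta\mid adr_2} |F_\beta(s;a,d,\delta,r)|\,\Big|\G_{\delta\beta ad}\Big(s,\tfrac{r_1a^2d^2r_2^2r_3^3}{\delta^2}\Big)\Big|,
\]
so it suffices to show that for $\sigma=\re(s)>\tfrac12$,
\[
|F_\beta(s;a,d,\delta,r)| \ll_{\sigma,\varepsilon} \N(ar)^{\varepsilon}\,\N(a)^{-\sigma}\,\N(d)^{-\sigma+\varepsilon}\,\N(\delta)^{1-3\sigma}.
\]
We bound the factors of $F_\beta$ one at a time.

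\textbf{Unimodular and normalized Gauss-sum factors.} The signs $(-1)^{C(\cdot,\cdot)}$, the quantities $\mu(\cdot)$, $\quartic{-1}{\delta}$, and the unitary factors $\overline a/|a|$, $\overline d/|d|$, $\overline\delta^3/|\delta|^3$ all have modulus at most $1$. For the normalized Gauss sums $g_4(r,a)/\N(a)^{1/2}$, $g_4(r_1a^2r_2^2r_3^3,d)/\N(d)^{1/2}$ and $\overline{g_4(\cdot,\delta)}/\N(\delta)^{1/2}$, we use \eqref{gauss_sum_bound} when the arguments are coprime, and \eqref{rel4} otherwise. In every case $a,d,\delta$ are square-free: $a$ by hypothesis, $d\mid r_4^*$ with $\mu(d)\neq 0$, and $\delta$ with $\mu(\delta)\neq0$. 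By \eqref{rel2} and \eqref{rel4}, the Gauss sum $g_4(v,\pi)$ at a prime is either $0$ (when $\pi\mid v$) or has modulus $\N(\pi)^{1/2}$. So each normalized Gauss sum is $O(1)$. The power factors $\N(ad)^{-s}\N(\delta)^{1-3s}$ have modulus $\N(a)^{-\sigma}\N(d)^{-\sigma}\N(\delta)^{1-3\sigma}$, which produces the main shape of the bound.

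\textbf{The Euler-type products.} We use $\sigma>\tfrac12$. The quadratic Gauss sum $g_2(\cdot,\pi)$ has modulus at most $\N(\pi)^{1/2}$. Hence each factor in the product over $\pi\mid r_1$ has the form $(1+z)^{-1}$ with $|z|\le \N(\pi)^{1/2-2\sigma}\le \N(\pi)^{-1/2}\le 5^{-1/2}$, since $\N(\pi)\ge5$ for primary primes. Similarly, each factor of the product over $\pi\mid adr_2r_3$ satisfies $|z|\le \N(\pi)^{1-4\sigma}<\N(\pi)^{-1}\le \tfrac15$. So every factor is bounded by an absolute constant $C$. The product is therefore at most $C^{\omega(adr_1r_2r_3)}\ll_\varepsilon \N(adr)^{\varepsilon}$ by the divisor-type bound $C^{\omega(n)}\ll_\varepsilon \N(n)^\varepsilon$. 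Since $d\mid r$, we have $\N(adr)^\varepsilon\le \N(ar)^{2\varepsilon}$.

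Collecting these bounds and renaming $\varepsilon$ gives the stated inequality; the explicit $\N(d)^{\varepsilon}$ is harmless slack. The only point needing care is the Gauss-sum step when $d$ or $\delta$ shares primes with the first argument. There \eqref{gauss_sum_bound} does not apply directly, and we instead factor via \eqref{rel2} and use the prime cases of \eqref{rel4} to get the bound $\N(\cdot)^{1/2}$.
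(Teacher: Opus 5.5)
Your proposal is correct and matches the paper's proof: apply the triangle inequality to the identity of Proposition~\ref{prop_phiLC}, then bound $F_\beta(s;a,d,\delta,r)\ll \N(r_1r_2r_3ad)^{\varepsilon}\N(ad)^{-\sigma}\N(\delta)^{1-3\sigma}$ factor by factor, exactly as you do. Your fallback for Gauss sums whose argument is not coprime to the modulus is never needed, because $d$ is coprime to $r_1a^2r_2^2r_3^3$ and $\delta$ is coprime to $r_1a^2d^2r_2^2r_3^3/\delta^2$, so \eqref{gauss_sum_bound} applies directly.
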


\begin{proof} 
	This follows from Proposition \ref{prop_phiLC}  and the bound
	\[F_{\beta}(s;a,d,\delta,r) \ll \N(r_{1}r_{2}r_{3}ad)^{\varepsilon}\N(ad)^{-\sigma}\N(\delta)^{1-3\sigma}.\]
\end{proof}

Therefore, bounding $H_\beta(X, Y, r)$ in Proposition \ref{PropositionH} reduces to controlling the special values 
$\G_{\delta \beta a d} (s, a^2h)$, with $h\in \Z[i]$, in the critical strip. The convexity bound of \cite[Proposition~4.3]{DDHL} is not sufficient to reach the admissible support of Theorem~\ref{main-theorem OLD}; nevertheless, after averaging over square-free values of $a$, we obtain a bound as strong as the Lindel{\"o}f bound. 

\begin{proposition}[Lindel{\"o}f on average]\label{prop-Lindelof-on-average}
		For any $0 \neq h \in \Z[i]$, $t\in \R$, $\frac12<\sigma < \frac34$ and $U \geq 1$, we have the bound 
	\begin{equation}\notag
		\sum_{\N(a) \leq U} \mu(a)^2  |\G_\beta(\sigma+it,ha^2)|^2
		\ll_{\sigma,\varepsilon} U^{1+\varepsilon} \N(h)^{\frac12+\varepsilon}(1+|t|)^{3+\varepsilon};
	\end{equation}
	for every $\varepsilon>0$.
\end{proposition}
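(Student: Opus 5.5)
We will use an approximate functional equation for $\G_\beta(s,ha^2)$ and then average over square-free $a$ with the quadratic large sieve.

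\emph{Step 1: Dirichlet polynomials.} By \cite[Proposition~4.3]{DDHL} (and the metaplectic theory behind it), $\G_\beta(s,r)$ has meromorphic continuation with at most a pole at $s=\frac34$, and a functional equation relating $s$ to $1-s$ with conductor of size $\asymp \N(r)$ up to bounded powers of $2$. It also has gamma factors of total degree $4$ in $|t|$. Since $\sigma<\frac34$ is fixed and $t$ is real, we stay at bounded distance from the pole, or we multiply by $(s-\frac34)$, which costs only a factor $(1+|t|)$. A smoothed approximate functional equation then gives
\[
\G_\beta(s,ha^2)\ll \Big|\sum_{c}\frac{g_4(ha^2,c)}{\N(c)^{\frac12+s}}\frac{\overline c}{|c|}V\Big(\frac{\N(c)}{N_1}\Big)\Big| + (\text{dual sum of length } N_2) + \text{small},
\]
with $N_1N_2\asymp \N(ha^2)(1+|t|)^{4}$. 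The dual sum involves the same Gauss sums, possibly with a modified argument, so both sums can be treated in the same way. Writing each smooth weight by Mellin inversion reduces the problem to sharp dyadic pieces at the cost of $(1+|t|)^{\varepsilon}$.

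\emph{Step 2: factor the Gauss sums.} For $c$ square-free and coprime to $ha$, \eqref{rel1} gives $g_4(ha^2,c)=\oquartic{a^2}{c}g_4(h,c)=\overline{(a/c)_2}\,g_4(h,c)$. By quadratic reciprocity for primary elements, this equals, up to a sign depending only on residues mod $4$, $(c/a)_2\,g_4(h,c)$. Terms with $c$ not square-free or with $(c,ha)\ne1$ are handled with \eqref{rel4} and \eqref{rel2}. Non-square-free $c$ contribute only through fourth-power factors, which add convergent Euler-type factors bounded by $\N(ha)^\varepsilon$. The sum over $c$ in a dyadic range $\N(c)\sim N$ is therefore essentially $\sum_c b_c\,(c/a)_2$ with $b_c=g_4(h,c)\N(c)^{-1/2-s}\overline c/|c|$ and $|b_c|\le \N(c)^{-\sigma}$.

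\emph{Step 3: large sieve.} Applying Onodera's quadratic large sieve \cite{Onodera}, exactly as in Proposition~\ref{TypeII_bound}, gives
\[
\sum_{\N(a)\le U}\mu^2(a)\Big|\sum_{\N(c)\sim N}b_c\Big(\frac{c}{a}\Big)_2\Big|^2\ll (UN)^\varepsilon (U+N)N^{1-2\sigma}.
\]
For the sum at $s$ we take $N\le N_1$. For the dual sum, whose terms sit at $1-s$ with coefficients of size $\N(c)^{-(1-\sigma)}$ multiplied by the root-number factor $(\N(ha^2)(1+|t|)^4)^{\frac12-\sigma}$, we take $N\le N_2$. We choose the balanced split $N_1=N_2=\N(h)^{1/2}\N(a)(1+|t|)^2$. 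Then for $\sigma>\frac12$, summing over the dyadic ranges gives $U^{1+\varepsilon}$ from the $U$ term. The $N^{2-2\sigma}\le N_1^{2-2\sigma}$ term gives at most $N_1\le \N(h)^{1/2}U(1+|t|)^2$, which is within the claimed $U^{1+\varepsilon}\N(h)^{1/2+\varepsilon}(1+|t|)^{3+\varepsilon}$. The spare powers of $|t|$ absorb the Mellin and pole losses.

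\emph{Main obstacle.} The hard part will be making the approximate functional equation uniform in $a$. We need the conductor of $\G_\beta(\cdot,ha^2)$ to be $\ll \N(h a^2)$ and the dual series to again have coefficients of the form $(\text{sign})\cdot(c/a)_2 \cdot(\text{bounded in }a)$, so that the large sieve applies to the dual side as well. Concretely, this requires the explicit functional equation from the Kubota--Patterson theory, in which the dual side involves $g_4(\overline{\cdot},\cdot)$ twisted sums, and an analysis of how the $a^2$ part of the argument interacts with it. If this fails, the fallback is to take $N_1=\N(h)\N(a)^2(1+|t|)^4$ with no dual sum, using the convexity bound to control the tail. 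This weaker route would give a bound with a worse power of $U$, and would need to be checked against the shape $U^{1+\varepsilon}$ required by the statement.
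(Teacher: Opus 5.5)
\textbf{The gap is Step~1.} The whole argument rests on it, and you leave it open.

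$\G_\beta(s,r)=\psi_\beta(s+\frac12,r,1)$ has no scalar functional equation. What is available is Diaconu's $24$-dimensional system \eqref{FE} for $\widehat\psi_{(i,1)}=G_\infty(s,\ell)\,\zeta_{\Q(i),\lambda}(4s-3,\ell)\,\psi_{(i,1)}$. Its features differ from what you assume:
\begin{itemize}
\item The transition coefficients $A_{(j,i)}(2^{-s},\ell)$ are rational in $2^{-s}$ with denominator $2^{4s-4}-1$, so individual terms may have poles on the central line.
\item $G_\infty$ has three $\Gamma_{\mathbb C}$ factors, so the $t$-part of the conductor is $(1+|t|)^6$, not $(1+|t|)^4$.
\item For $\ell=1$ there is no pole to remove.
\end{itemize}

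A usable approximate functional equation, uniform in $a$, therefore has to be built for the vector $(Z_{(i,1)})_i$. Its coefficients contain the $d^4$-convolution with weight $\N(d)^3$. The truncated dual pieces must be kept to the left of the poles of the $A_{(j,i)}$, and you must divide by $\zeta_{\Q(i),\lambda}(4s-3,\ell)$ at the end. Above all, it needs the fact you explicitly leave unverified: that the dual series again carries the same $a^2$ in the Gauss-sum argument. Your fallback (no dual sum, convexity for the tail) loses a power of $U$, as you note. So as written, the proposal does not prove the bound.

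The missing fact is available in the paper. By \eqref{FE} and \eqref{convolution}, the dual side is a combination of $\psi_\gamma(2-s,\pm\lambda^{2b}ha^2,-\ell)$ with conductor $\N(ha^2)$. For $(c,a)=1$ its coefficients are $(a/c)_2$ times $a$-independent quantities. With this input and $N_1=N_2\asymp\N(h)^{1/2}\N(a)(1+|t|)^3$, your large-sieve count does close.

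Step~2 also needs repair. The terms with $(c,ha)\neq1$ are not ``fourth-power factors'': for $\pi\mid a$, $\pi\nmid h$, \eqref{rel4} allows $\pi^3\,\|\,c$. These terms couple $c$ to $a$ beyond $(c/a)_2$, so the large sieve cannot be applied to them as they stand. The paper handles this via Lemma~\ref{square_free_siev}: write $c=wn$, then $w=\alpha\beta^2$ and $m=Qv$, sieve over $v$, and sum the convergent series $C(h)$.

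\textbf{How the paper avoids the approximate functional equation.} It uses a self-improving argument, in the $\psi$-normalisation with $1<\sigma<\frac54$.
\begin{enumerate}
\item It writes $Z_{(i,1)}(s)$ as an $e^{-n/X}$-smoothed sum plus an integral over $\re(s+w)=2-\sigma$.
\item It uses the functional equation only in the inequality form \eqref{bound-Z-tilde}. This sends $\widetilde Z(\cdot,hm^2,\ell)$ on that line back to $\widetilde Z(\cdot,hm^2,\ell)$ on $\re=\sigma$.
\item After summing over $m$, the dual term is therefore the same quantity $\mathcal Z_U$, multiplied by $(U^2\N(h)(1+|t|)^6X^{-2})^{2\sigma-2}$.
\item It takes $\tilde t$ maximising $\mathcal Z_U(\sigma+iy)/(1+|y|)^3$, which is finite by convexity, and $X\asymp U\N(h)^{1/2}(1+|\tilde t|)^3$ with a large constant. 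Then that factor is at most $\frac12$, and the term is absorbed.
\end{enumerate}
The direct part is handled exactly as in your Steps~2--3. This route needs only that \eqref{FE} preserves $r$ and that the $A_{(j,i)}$ are bounded for $\frac34\le\re s<1-\varepsilon$. It needs no explicit dual sums and no truncation through the poles of the $A_{(j,i)}$, which is why it is the more economical route here.
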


We prove this result in the next section. 
Assuming the proposition, we get the bound on $H_\beta(X, Y, r)$ required to establish Theorem~\ref{main-theorem OLD}. 

\begin{proof}[Proof of Proposition \ref{BoundForHAfterLindelof}]
	We first bound the contribution of the first integral in Proposition~\ref{PropositionH}. 
	By shifting the integral to the line $\re(s) = \frac{1}{2}+\varepsilon$, 
	and using the bound from Lemma~\ref{TransformBounds}, we get
	\begin{equation}\begin{split}\label{12sep.1}
		\int_{(2)} \G_{\beta}&(s,r|r,a) \widetilde R_{X, Y}(s-1) \d s 
		\ll \frac{ X^{\frac\nu2+\varepsilon}}{Y} \int_{\mathbb R} |\G_{\beta}(\tfrac{1}{2}+\varepsilon+it,r|r,a)| \min (1,|t|^{-3})  \d t.
	\end{split}\end{equation}
	We then bound $\G_{\beta}(\frac12+\varepsilon+it,r|r,a)$ with the expression from Corollary~\ref{CorLindelof} and sum over $a$ to obtain that the first integral term in Proposition~\ref{PropositionH} is bounded by
	\begin{align*}
		&  \frac{X^{\frac\nu2}}{Y}  (\N(r)X)^\varepsilon \int_{\mathbb R} \sum_{\N(a) \leq U} \sum_{\substack{d \mid r_4^{*}\\d \textnormal{ primary}}}\sum_{\delta \mid a d r_2} \frac{\mu^2(a)}{\N(ad\delta)^{\frac12}}
		\left\lvert \G_{\delta \beta a d} ( \tfrac12+\varepsilon+it, \tfrac{r_{1}a^2d^{2}r_{2}^{2}r_{3}^{3}}{\delta^2})\right\rvert      \min (1,|t|^{-3})  \d t \\
		&\ll \frac{X^{\frac\nu2}}{Y}  (\N(r)X)^\varepsilon 
		\int_{\mathbb R} \sum_{\substack{d \mid r_4^{*}\\d \textnormal{ primary}}}\sum_{\substack{\N(\delta) \leq U \\
		\delta' \mid  d r_2}} \frac{1}{\N(d\delta^2\delta')^{\frac12}} 	\\ 
		& \quad \quad \times \sum_{\N(a) \leq \frac{U}{\N(\delta)}}  \frac{\mu^2(a)}{\N(a)^{\frac12}} 
		\left\lvert \G_{\delta^2\delta' \beta a d} ( \tfrac12+\varepsilon+it, \tfrac{r_{1}a^2d^{2}r_{2}^{2}r_{3}^{3}}{\delta'^2})\right\rvert \min (1,|t|^{-3})  \d t,
	\end{align*}
	where we wrote $\delta \mid adr_2$ as $\delta = \delta_a\delta'$ with $\delta_a\mid a$ and $\delta'\mid dr_2$ and used positivity: all the terms of the sum in the first line are included in the second line.
	By the Cauchy--Schwarz inequality and Proposition~\ref{prop-Lindelof-on-average}, we have
	\begin{align*}
		&\sum_{\N(a) \leq \frac{U}{\N(\delta)}} \frac{\mu^2(a)}{\N(a)^{\frac12}}
		\left\lvert \G_{\delta^2\delta' \beta a d} ( \tfrac12+\varepsilon+it, \tfrac{r_{1}a^2d^{2}r_{2}^{2}r_{3}^{3}}{\delta'^2})\right\rvert\\
		&\ll \Big(\sum_{\N(a) \leq \frac{U}{\N(\delta)}} \frac{1}{\N(a)}
	 	\Big)^{\frac12}  \Big(\sum_{\N(a) \leq \frac{U}{\N(\delta)}}
	 	\mu^2(a) \left\lvert \G_{\delta^2\delta' \beta a d} ( \tfrac12+\varepsilon+it, \tfrac{r_{1}a^2d^{2}r_{2}^{2}r_{3}^{3}}{\delta'^2})\right\rvert^2
	  	\Big)^{\frac12} \\
	  	&\ll_{\varepsilon} X^{\varepsilon}  (\tfrac{U}{\N(\delta)})^{\frac12}  \N(r)^{\frac14+\varepsilon}(1+|t|)^{\frac32+\varepsilon}, 
	\end{align*}
	since for any $d\mid r_4^*$ and $\delta'\mid dr_2$,  we have $\N(\tfrac{r_{1}d^{2}r_{2}^{2}r_{3}^{3}}{\delta'^2}) \leq \N(r)$.
	Replacing above, we obtain 
	\begin{equation}\begin{split}\notag
		\int_{(2)} \G_{\beta}(s,r|r,a) \widetilde R_{X, Y}(s-1) \d s 
		&\ll  \frac{X^{\frac\nu2}}{Y}  (\N(r)X)^\varepsilon \sum_{\substack{d \mid r_4^{*}\\d \textnormal{ primary}}}\sum_{\substack{\N(\delta) \leq U \\
		\delta' \mid  d r_2}} \frac{1}{\N(d\delta^2\delta')^{\frac12}} (\tfrac{U}{\N(\delta)})^{\frac12}  \N(r)^{\frac14}\\
		& \ll   \frac{X^{\frac\nu2+\varepsilon} U^{\frac12} \N(r)^{\frac14+\varepsilon }}{Y} .
	\end{split}\end{equation}
	The contribution to the second integral term is bounded similarly by first moving the contour of the $s'$ integral to the line~$\re(s') = \varepsilon$ and noting that the integral over the variable~$s'$ is convergent.
\end{proof}

\section{Lindel{\"o}f on average} \label{section-LOA}

\subsection{Properties of the Dirichlet series of quartic Gauss sums}

For $0 \neq r\in \mathbb Z[i]$, $\beta \in \{ 1, 1 + \lambda^3 \}$, and $\ell \in \mathbb Z$, we define
$$\psi_\beta(s,r,\ell) :=\sum_{\substack{ c\in \mathbb Z[i] \\ c\equiv \beta \bmod 4}} \frac{g_4(r, c)}{\N(c)^s} \left(\frac{\overline{c}}{|c|}\right)^\ell.$$
We remark that $\G_\beta(s, r) = \psi_\beta(s+\frac12,r,1)$, where $\G_\beta(s, r)$ is defined in \eqref{generating-phi}.
Let us first recall the analytic properties of $\psi_\beta(s,r,\ell)$, following \cite{Diaconu, DDHL}. 
For this, we introduce some notation. For $\re(s)>1$, let 
\begin{align*}
	\zeta_{\Q(i), \lambda}(s, \ell) &= \sum_{m \equiv 1 \bmod \lambda^3} \frac{1}{\N(m)^s}  \left( \frac{\overline{m}}{|m|} \right)^{4\ell}
\end{align*}
denote the Hecke $L$-function (with local factor at $\lambda$ removed) associated with the Hecke character $( \frac{\overline{m}}{|m|} )^{4\ell}$.
For $i=1, \dots, 24$, let $$\psi_{(i ,1)}(s, r, \ell)$$ be the functions defined in \cite[Equation (4.11)]{DDHL}. 

Our functions $\psi_\beta(s,r,\ell)$ coincide, up to a constant multiple, with two of the Dirichlet series $\psi_{(i ,1)}(s, r, \ell)$. More precisely, there are some values $1 \leq i_1, i_2 \leq 24$ (see \cite[Equation (2.37)]{Diaconu}) such that
\begin{align}\label{Eq psi beta psi i}
	\mathcal{V} \cdot \psi_{(i_1,1)}(s,r, \ell) = (-1)^\ell \psi_1( s, r, \ell)  \quad \text{and} \quad
	\mathcal{V} \cdot \psi_{(i_2,1)}( s, r,  \ell) = (-1)^{\ell+1} \psi_{1+\lambda^{3}}( s, r, \ell) 
\end{align}
where $\mathcal{V}$ is an absolute constant ($\mathcal{V}$ is the volume of $\mathbb C/\lambda^4\mathbb C$ with respect to $\d z$). 
Furthermore, for any $1 \leq i \leq 24$, the function~$\psi_{(i,1)}(s, r, \ell)$ can be written as a $\C$-linear combination of $\psi_{1}(s, r, \ell)$ and $\psi_{1+\lambda^3}(s, r, \ell)$ of the form (see \cite[Remark 4.2]{DDHL} and \cite{Diaconu})
\begin{align} \label{convolution}
	\psi_{(i,1)}( s, r, \ell) = \sum_{\substack {(a, b) \in S_i(r)\\ \gamma \in \{ 1, 1 + \lambda^3 \} }} A_i(a,b,r,\ell,\gamma,s) \; \psi_{\gamma}( s, (-1)^a \lambda^{2b} r, \ell) 
\end{align}
where $S_i(r) \subseteq \Z_{\geq 0}^2$ is a finite set of size bounded linearly in terms of $\mathrm{ord}_{\lambda}(r)$, the~$\lambda$-adic valuation of~$r$, and the coefficients $A_i$ are polynomials in $2^{-s}$, in particular they are bounded in vertical strips.
	
We also define
\begin{align*}
	Z_{(i,1)}(s, r, \ell) &:= \zeta_{\Q(i), \lambda}(4s-3, \ell) \psi_{(i,1)}(s, r, \ell) \\
	\widehat\psi_{(i,1)}(s,r, \ell) &:= G_\infty(s,\ell) Z_{(i,1)}(s, r, \ell) \\
	G_\infty(s,\ell) &:= \Gamma_{\mathbb C}( s+\tfrac{|\ell|}{2}-\tfrac34 ) \Gamma_{\mathbb C}( s+\tfrac{|\ell|}{2}-\tfrac12 ) \Gamma_{\mathbb C}( s+\tfrac{|\ell|}{2}-\tfrac14 )\\
	\Gamma_{\mathbb C}(s) &:= 2 (2\pi)^{-s}\Gamma(s).
\end{align*}
The functions $\widehat\psi_{(i,1)}(s,r,\ell)$ for $\ell\in \mathbb Z$, $0 \neq r\in \mathbb Z[i]$, and $i= 1,\dots , 24$,
can be meromorphically continued to $\mathbb C$; if $\ell\neq 0$ they are entire, if $\ell=0$ they have at
most two possible (simple) poles at $s=3/4$ and $s=5/4$ (see \cite[Theorem 2.1]{Diaconu} and \cite[Proposition 4.3]{DDHL}).
Moreover, for each $i$, we have the functional equation \cite[(2.33), (2.48)]{Diaconu}
\begin{equation} \label{FE}
	\widehat\psi_{(i,1)}( s, r, \ell) 
	= \N(r)^{1-s} \left(\frac{\overline{r}}{|r|}\right)^\ell \sum_{j=1}^{24}  A_{(j,i)}(2^{-s},\ell) \widehat\psi_{(j,1)}( 2-s, r, -\ell),
\end{equation} 
where $A_{(j,i)}(2^{-s},\ell)$ are the local coefficients appearing in Diaconu's result.
By adapting the argument of \cite[Section~5]{Suzuki} with the added characters $r \mapsto \left(\frac{\overline{r}}{|r|}\right)^\ell$, it follows that the $A_{(j, i)}(2^{-s},\ell)$ are rational functions in $2^{-s}$ with coefficients in $\C$ with a denominator $(2^{4s-4} -1)$ or no denominator. In particular, for all $(j,i)$ and $\ell$ the function $s\mapsto A_{(j, i)}(2^{-s},\ell)$ is holomorphic on $\re(s) \neq 1$ and is  bounded in any vertical strip in $\frac{3}{4} \leq \re(s) < 1-\varepsilon$, where the implied constant depends only on $\varepsilon$.  In the same region, Stirling's formula gives (cf. \cite[Equation (6.14)]{DDDS})
$$ \frac{G_\infty(2-s,-\ell)}{G_\infty(s,\ell)} \ll |1+\im (s)|^{6-6\re (s)}. $$

Let
\begin{equation} \label{correct-Z-tilde}
	\widetilde{Z}(s, r, \ell) := \sum_{i=1}^{24} |Z_{(i,1)}(s, r, \ell)|^2 +  \sum_{i=1}^{24} |Z_{(i,1)}(s, r, -\ell)|^2.
\end{equation}
It then follows from the functional equation \eqref{FE} that
\begin{align}\label{bound-Z-tilde}
	\widetilde Z( s,r, \ell) \ll_{\re (s)} \N(r)^{2-2\re (s)} (1+|\im (s)|)^{12-12\re (s)} \widetilde Z(2-s,r,\ell),
\end{align} 
for $\frac34 \leq \re(s)<1$.

Finally, the convexity bound from~\cite[Proposition~4.3]{DDHL} yields a similar bound for $\widetilde{Z}$.
\begin{proposition}\label{convexity} 
	For $\ell \in \mathbb{Z}$,  $0 \neq r \in \mathbb{Z}[i]$,
	for $0<\varepsilon<1/100$ we have,
	\begin{align} \label{convexbd}
		\widetilde Z( s,r, \ell)& \ll_{\varepsilon, \mathrm{ord}_{\lambda}(r)} \N(r)^{\frac{3}{2}-\re(s)+\varepsilon} (|s|^2+\ell^2+1)^{3(\frac{3}{2}-\re(s))+\varepsilon}; \\
		& \quad \text{for} \quad 1+\varepsilon<\re(s)<\tfrac{3}{2}+\varepsilon \quad \text{and} \quad  |s- \tfrac{5}{4} |>\tfrac{1}{8}. \nonumber 
	\end{align}
\end{proposition}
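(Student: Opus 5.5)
We want the convexity bound \eqref{convexbd} for $\widetilde Z(s,r,\ell)$ in the range $1+\varepsilon<\re(s)<\tfrac32+\varepsilon$. We obtain it by Phragm\'en--Lindel\"of, interpolating between a trivial bound to the right of absolute convergence and the functional equation \eqref{FE} reflected to the left. Since $\widetilde Z$ is a finite sum of $|Z_{(i,1)}(s,r,\pm\ell)|^2$, it suffices to bound each $Z_{(i,1)}(s,r,\ell)$ by $\N(r)^{\frac12(\frac32-\sigma)+\varepsilon}(|s|^2+\ell^2+1)^{\frac32(\frac32-\sigma)+\varepsilon}$, and then square.

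\emph{Right edge.} On $\re(s)=\tfrac32+\varepsilon$, the Dirichlet series $\psi_\gamma(s,r',\ell)$ converges absolutely, because $|g_4(r,c)|\le \N(c)^{1/2}$ up to the contribution of the non-coprime factors in \eqref{rel4}. The factor $\zeta_{\Q(i),\lambda}(4s-3,\ell)$ is also absolutely bounded there. Using \eqref{convolution}, whose coefficients $A_i$ are bounded in vertical strips and whose number of terms depends only on $\mathrm{ord}_\lambda(r)$, we get $Z_{(i,1)}\ll_{\varepsilon,\mathrm{ord}_\lambda(r)} \N(r)^{\varepsilon}$. The prime factors of $r$ cost only a divisor-type factor.

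\emph{Left edge.} On $\re(s)=\tfrac12-\varepsilon$, we apply \eqref{FE}, which gives
\[
Z_{(i,1)}(s,r,\ell)=\N(r)^{1-s}\Big(\tfrac{\overline r}{|r|}\Big)^\ell\sum_j A_{(j,i)}(2^{-s},\ell)\frac{G_\infty(2-s,-\ell)}{G_\infty(s,\ell)}Z_{(j,1)}(2-s,r,-\ell).
\]
Here $2-s$ lies on $\re=\tfrac32+\varepsilon$, so the right-edge bound applies. The coefficients $A_{(j,i)}$ are bounded away from $\re(s)=1$; the denominator $2^{4s-4}-1$ only vanishes on $\re(s)=1$. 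By Stirling, uniformly in $\ell$, the gamma ratio is $\ll(|s|^2+\ell^2+1)^{3(1-\sigma)}$, i.e.\ $(|s|^2+\ell^2+1)^{3/2+O(\varepsilon)}$ on this line; note $G_\infty$ involves three $\Gamma_\C$ factors with shifts $|\ell|/2$. This gives $Z\ll \N(r)^{1/2+\varepsilon}(|s|^2+\ell^2+1)^{3/2+\varepsilon}$.

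\emph{Interpolation.} To apply Phragm\'en--Lindel\"of on the strip $\tfrac12-\varepsilon\le\sigma\le\tfrac32+\varepsilon$, we must kill the possible poles at $s=3/4,5/4$ (only when $\ell=0$) by multiplying by $(s-\tfrac34)(s-\tfrac54)$, and handle the $\re(s)=1$ denominators by multiplying by $(2^{4s-4}-1)$. Alternatively, we work only on a slightly smaller strip, excising the disc $|s-\tfrac54|\le\tfrac18$, which is why the hypothesis excludes it. We also need finite order growth in the strip; this follows from the meromorphic continuation in \cite{Diaconu} (the completed function is of finite order). Linear interpolation of the exponents then gives, at $\sigma\in(1+\varepsilon,\tfrac32+\varepsilon)$, the exponent $\N(r)^{\frac12(\frac32-\sigma)}$ and $(|s|^2+\ell^2+1)^{\frac32(\frac32-\sigma)}$. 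Squaring yields \eqref{convexbd}.

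The main obstacle is the careful treatment of the auxiliary factors. The multipliers $(2^{4s-4}-1)$ and $(s-\tfrac34)(s-\tfrac54)$ oscillate or grow, and dividing them back out is harmless only because we stay away from $\re(s)=1$ and from $s=\tfrac54$. The other delicate point is uniformity of Stirling in $\ell$. This is exactly what \cite[Proposition~4.3]{DDHL} carries out, so we would follow that argument, checking that the constants depend only on $\varepsilon$ and $\mathrm{ord}_\lambda(r)$.
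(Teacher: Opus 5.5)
Your proposal is correct and takes essentially the same route as the paper. The paper gives no independent proof: it quotes the convexity bound of \cite[Proposition~4.3]{DDHL} for each $Z_{(i,1)}(s,r,\pm\ell)$, and that bound is the Phragm\'en--Lindel\"of interpolation you describe, between absolute convergence on $\re(s)=\tfrac32+\varepsilon$ and the functional equation \eqref{FE} with Stirling on $\re(s)=\tfrac12-\varepsilon$.

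One step is superfluous: the auxiliary factor $2^{4s-4}-1$ is not needed. Since $\widehat\psi_{(i,1)}$ is entire apart from possible poles at $\tfrac34,\tfrac54$, each $Z_{(i,1)}$ is already holomorphic on $\re(s)=1$, and the coefficients $A_{(j,i)}$ only need to be bounded on the left edge, which they are.
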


\subsection{Proof of Proposition \ref{prop-Lindelof-on-average}}

We first prove the following technical lemma, which is the analogue of \cite[Lemma 6.4]{DDDS}.
\begin{lemma}\label{square_free_siev}
	Let $H: \R \rightarrow \R$ be a function such that $H(x) \ll (1+x^{2})^{-1}$.  Let $s  = \sigma + it \in \C$, with $\sigma \geq 1$.  For any $\ell \in \Z$, $X \geq 1$, $\varepsilon > 0$, and $k \in \Z[i]\setminus \{0\}$, we have that
		\begin{multline*}
		\Big\lvert \sum_{\substack{c \in \Z[i]\\c \equiv \beta \bmod{4}}}\frac{g_4(k,c)}{\N(c)^{s}} \left(\frac{\overline{c}}{|c|} \right)^\ell H\left(\frac{\N(c)}{X}\right)\Big\rvert^2 \\
		\ll_{\varepsilon} \max_{\gamma \in \{1,1+\lambda^3\}}\sum_{\substack{w \in \Z[i]\\w \equiv \gamma \bmod{4}\\w \mid k^{2}}}\frac{ \N(k)^{\varepsilon}}{\N(w)^{2 \sigma-2}}\Big\lvert\sum_{\substack{n \in \Z[i]\\ n \equiv \beta \gamma \bmod{4}}}\mu^2(n)\oquartic{kw^2}{n}\frac{g_{4}(1,n)}{\N(n)^{s}}
		\left( \frac{\overline{n}}{|n|} \right)^\ell H\left(\frac{\N(wn)}{X}\right)\Big\rvert^2.
	\end{multline*}
\end{lemma}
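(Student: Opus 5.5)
The idea is to reduce the sum over all $c\equiv\beta \bmod 4$ to sums over square-free $n$ twisted by quartic characters, using the structure of Gauss sums at prime powers recorded in \eqref{rel4}, and then to apply Cauchy--Schwarz over the resulting divisors $w$.

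\textbf{Factorization of $c$.} Write each primary $c$ uniquely as $c = wn$, where $n$ is square-free and coprime to $k$, and $w$ collects all prime powers $\pi^{e}\parallel c$ with either $e\ge 2$ or $\pi\mid k$. By twisted multiplicativity \eqref{rel2},
\[
g_4(k,wn) = (-1)^{C(w,n)}\, g_4(k,w)\, g_4(kw^2,n),
\]
and by \eqref{rel1} together with $(n,kw)=1$,
\[
g_4(kw^2,n) = \oquartic{kw^2}{n}\, g_4(1,n).
\]
The sign $(-1)^{C(w,n)}$ depends only on $w,n \bmod 4$, so we split into the classes $w\equiv\gamma$, $n\equiv\beta\gamma \bmod 4$, with $\gamma\in\{1,1+\lambda^3\}$. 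Similarly $(\overline{wn}/|wn|)^\ell$ factors. The coprimality $(n,kw)=1$ is automatic from the character $\oquartic{kw^2}{n}$, which vanishes otherwise; the condition $(n,w)=1$ is enforced by the same character.

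\textbf{Support of $w$.} Next we show that $g_4(k,w)\neq 0$ forces $w\mid k^2$, up to harmless bookkeeping. For a prime $\pi^e\parallel w$ with $\pi^{a}\parallel k$, twisted multiplicativity and \eqref{rel4} show that $g_4(k,\pi^e)$ vanishes unless $a=e-1$, or $a\ge e$ with $4\mid e$. Hence $e\le a+1\le 2a$ when $\pi\mid k$. When $\pi\nmid k$ we need $e=1$, but such primes are in $n$, not $w$. So $w\mid k^2$ (indeed $w\mid \pi$-adically bounded by $k$ times the radical of $k$).

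In every case, \eqref{rel4} also gives
\[
|g_4(k,w)| \le \N(w)\cdot \N(\gcd)\ \text{-type factors},
\]
and more precisely $|g_4(k,\pi^e)|\le \N(\pi)^{e-1/2}$ or $\le \N(\pi)^{e}$ when $\pi^e\mid k$. We use the crude bound $|g_4(k,w)|\le \N(w)\,\N(\gcd(w,k))^{\ldots}$; the extra factors are all $\ll \N(k)^{\varepsilon}$ after accounting for divisor counts, and we will aim for $|g_4(k,w)|/\N(w)^{\sigma}\ll \N(w)^{1-\sigma}$.

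\textbf{Cauchy--Schwarz.} The left side becomes
\[
\sum_{\gamma}\ \sum_{w\mid k^2,\ w\equiv\gamma} \frac{\pm g_4(k,w)}{\N(w)^s}\left(\frac{\overline w}{|w|}\right)^{\ell}\,\Sigma(w),
\]
where $\Sigma(w)$ is exactly the inner $n$-sum in the statement, with weight $H(\N(wn)/X)$. Applying Cauchy--Schwarz over $(\gamma,w)$ gives
\[
\Big(\sum_{w\mid k^2} 1\Big)\cdot \sum_{w}\N(w)^{2-2\sigma}\,|\Sigma(w)|^2 .
\]
The number of divisors of $k^2$ is $\ll \N(k)^\varepsilon$, and the sum over two values of $\gamma$ is bounded by $2\max_\gamma$.

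\textbf{Main obstacle.} The main difficulty is the careful case check of \eqref{rel4} giving $|g_4(k,w)|\ll \N(w)$ with $w\mid k^2$. This is needed so that the weight is $\N(w)^{2-2\sigma}$ rather than something larger. Prime powers with $e$ up to $a+1$ and $e\equiv 0 \bmod 4$ with the value $\varphi(\pi^e)$ must be checked to give $\le \N(\pi^e)$, which they do. I would verify this prime by prime, using that $g_4(k,\cdot)$ is twisted-multiplicative on the prime power decomposition of $w$.
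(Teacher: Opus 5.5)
Your proposal is correct and is essentially the paper's argument: factor $c=wn$, use \eqref{rel2} and \eqref{rel1} to split off $g_4(k,w)$ and the character $\oquartic{kw^2}{n}$, restrict to $w\mid k^2$ via \eqref{rel4}, and then apply Cauchy--Schwarz over $w$ together with the divisor bound. The paper instead takes $w\mid k^\infty$ and gets $\mu^2(n)$ from the vanishing of $g_4(1,n)$ for non-square-free $n$, which is equivalent to your factorization. The step you flag as the main obstacle is immediate: $|g_4(k,w)|\le \N(w)$ holds because $g_4(k,w)$ is a sum of $\N(w)$ terms of modulus at most $1$, so neither the prime-by-prime check of \eqref{rel4} nor the $\N(\gcd)$-type factors are needed.
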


\begin{proof}
	Write $c = wn$, where $w \mid k^{\infty}$, where $(n,k)=1$, and $w,n$ are primary.  
	Applying \eqref{rel1} and \eqref{rel2} to rewrite the Gauss sum $g_4(k, wn)$, it follows that
	\begin{align*}
		&\Big\lvert \sum_{\substack{c \in \Z[i]\\c \equiv \beta \bmod{4}}}\frac{g_4(k,c)}{\N(c)^{s}}
		\left( \frac{\overline{c}}{|c|} \right)^\ell H\left(\frac{\N(c)}{X}\right)\Big\rvert^2\\
		&= \Big\lvert \sum_{\gamma \in \{1,1+\lambda^3\}}(-1)^{C(\gamma,\beta\gamma)}\sum_{\substack{w \in \Z[i]\\w \equiv \gamma \bmod{4}\\w \mid k^{\infty}}}\frac{g_{4}(k,w)}{\N(w)^{s}}
		\left( \frac{\overline{w}}{|w|} \right)^\ell 
		\sum_{\substack{n \in \Z[i]\\ n \equiv \beta \gamma \bmod{4}}}\oquartic{kw^2}{n}\frac{g_{4}(1,n)}{\N(n)^{s}}
		\left( \frac{\overline{n}}{|n|} \right)^\ell 
		H\left(\frac{\N(wn)}{X}\right)\Big\rvert^2\\
		&\ll \max_{\gamma \in \{1,1+\lambda^3\}}\Big\lvert\sum_{\substack{w \in \Z[i]\\w \equiv \gamma \bmod{4}\\w \mid k^{\infty}}}
		\frac{g_{4}(k,w)}{\N(w)^{s}}
		\left( \frac{\overline{w}}{|w|} \right)^\ell 
		\sum_{\substack{n \in \Z[i]\\ n \equiv \beta \gamma \bmod{4}}}\oquartic{kw^2}{n}\frac{g_{4}(1,n)}{\N(n)^{s}}
		\left( \frac{\overline{n}}{|n|} \right)^\ell H\left(\frac{\N(wn)}{X}\right)\Big\rvert^2.
	\end{align*}
	Since $w \mid k^{\infty}$, we may in fact restrict to $w \mid k^{2}$ by \eqref{rel4}.  It follows by the Cauchy--Schwarz inequality  that
	\begin{multline*}
		\Big\lvert \sum_{\substack{c \in \Z[i]\\c \equiv \beta \bmod{4}}}\frac{g_4(k,c)}{\N(c)^{s+\frac{1}{2}}}
		\left( \frac{\overline{c}}{|c|} \right)^\ell
		H\left(\frac{\N(c)}{X}\right)\Big\rvert^2\ll \max_{\gamma \in \{1,1+\lambda^3\}}\sum_{\substack{w \in \Z[i]\\w \equiv \gamma \bmod{4}\\w \mid k^{2}}}\Big\lvert\frac{g_{4}(k,w)}{\N(w)}\Big\rvert^2\\
		\times\sum_{\substack{w \in \Z[i]\\w \equiv \gamma \bmod{4}\\w \mid k^{2}}}\Big\lvert\frac{1}{\N(w)^{s-1}}\sum_{\substack{n \in \Z[i]\\ n \equiv \beta \gamma \bmod{4}}}\oquartic{kw^2}{n}\frac{g_{4}(1,n)}{\N(n)^{s}}
		\left( \frac{\overline{n}}{|n|} \right)^\ell
		H\left(\frac{\N(wn)}{X}\right)\Big\rvert^2,
	\end{multline*}
	and the lemma now follows upon noting that by trivially bounding $g_{4}(k,w) \leq \N(w)$, one has
	\[\sum_{\substack{w \in \Z[i]\\w \equiv \gamma \bmod{4}\\w \mid k^2}}\Big\lvert\frac{g_{4}(k,w)}{\N(w)}\Big\rvert^2 \ll \N(k)^{\varepsilon},\]
	for any $\varepsilon > 0$.
\end{proof}

We are now ready to prove the main result of this section.

\begin{proposition}[Lindel{\"o}f on average]\label{prop-Lindelof-on-average-2}
	For any $0 \neq h \in \Z[i]$, $\ell \neq 0 \in \Z$, $t\in \R$, $1<\sigma < \frac54$ and $U \geq 1$, we have the bound 
	\begin{equation}\notag
		\sum_{\N(m) \leq U} \mu(m)^2  |\psi_\beta(\sigma+it,hm^2,\ell)|^2
		\ll_{\sigma,\varepsilon} U^{1+\varepsilon} \N(h)^{\frac12+\varepsilon}(1+|t|)^{3+\varepsilon} 
	\end{equation}
	for every $\varepsilon>0$. 
\end{proposition}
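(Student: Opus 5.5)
We follow the strategy of \cite[Section 6]{DDDS}: an approximate functional equation for $\psi_\beta(s,hm^2,\ell)$, followed by the quadratic large sieve on both resulting pieces. Fix $s=\sigma+it$ with $1<\sigma<\frac54$ and $m$ square-free with $\N(m)\le U$. Since $\ell\neq 0$, every $\widehat\psi_{(i,1)}(s,hm^2,\ell)$ is entire. Start from the Mellin integral
\[
\frac{1}{2\pi i}\int_{(2)} Z_{(i,1)}(s+u,hm^2,\ell)\,\frac{G_\infty(s+u,\ell)}{G_\infty(s,\ell)}\,e^{u^2}\,\frac{\d u}{u}.
\]
Shift the contour to $\re(u)=-2$, apply the functional equation \eqref{FE}, and use \eqref{Eq psi beta psi i} together with \eqref{convolution} to express everything through $\psi_\gamma$. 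This writes $\psi_\beta(s,hm^2,\ell)$, up to the harmless factor $\zeta_{\Q(i),\lambda}(4s-3,\ell)^{-1}$, as a sum of two smoothed Dirichlet series. The first is $\sum_c g_4(hm^2,c)\N(c)^{-s}(\overline{c}/|c|)^\ell H_1(\N(c)/X)$. The second, the dual sum, has length about $\N(hm^2)(1+|t|)^3/X$ and carries the factor $\N(hm^2)^{1-s}(1+|t|)^{6-6\sigma}$ together with the bounded coefficients $A_{(j,i)}$. Taking $X\asymp (\N(h)U^2)^{1/2}(1+|t|)^{3/2}$ balances the two pieces. The weights $H_j$ decay fast enough to satisfy the hypothesis of Lemma \ref{square_free_siev}, after the usual manipulation of the $u$-integral (the $e^{u^2}$ factor makes this uniform in the shift parameter).

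Next apply Lemma \ref{square_free_siev} with $k=hm^2$ to each piece. This reduces matters to sums over $w\mid h^2m^4$ with weight $\N(w)^{2-2\sigma}$ (the dual piece sits at $2-\sigma<1$, so its weight is absorbed by its prefactor $\N(hm^2)^{1-\sigma}$). The inner sums then have the form
\[
\sum_n \mu^2(n)\oquartic{hm^2w^2}{n}\frac{g_4(1,n)}{\N(n)^{s}}\Big(\frac{\overline n}{|n|}\Big)^\ell H\Big(\frac{\N(wn)}{X}\Big).
\]
The key observation is that $\quartic{m^2}{n}=\left(\frac{m}{n}\right)_2$. Once $h$, $w$ (via its part independent of $m$) and the twist are fixed, the sum over square-free $m$ is therefore a quadratic character sum. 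Write $w=w_1w_2$ with $w_1\mid h^2$ and $w_2\mid m^4$, and interchange the sums over $m$ and $w$ so that $w$ becomes an outer variable; its count is only $\N(hm)^{\varepsilon}$. Split $n$ dyadically and open the square. The quadratic large sieve over $\Z[i]$ \cite{Onodera} then bounds the mean square over $\N(m)\le U$ by $(U+N)^{1+\varepsilon}\sum|b_n|^2$, where $b_n=g_4(1,n)\N(n)^{-s}$ has $|b_n|^2\ll\N(n)^{1-2\sigma}$.

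In the first piece, $N\le X$, so after summing dyadically we get $\ll (U+X)X^{2-2\sigma+\varepsilon}$, which is $\ll U^{1+\varepsilon}\N(h)^{1/2}(1+|t|)^{3}$ for $\sigma>1$. In the dual piece, $N\le \N(h)U^2(1+|t|)^3/X$, the summand weight is $\N(n)^{1-2(2-\sigma)}$, and the prefactor is $\N(hm^2)^{2-2\sigma}(1+|t|)^{12-12\sigma}$. With the chosen $X$ the exponents balance to the same bound, with $U^{1+\varepsilon}\N(h)^{1/2+\varepsilon}(1+|t|)^{3+\varepsilon}$. The restriction $\sigma<\frac54$ keeps us in the region where \eqref{bound-Z-tilde} and the bounds on the $A_{(j,i)}$ hold, after reflecting.

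The main obstacle is that the dependence on $m$ is not purely through $\left(\frac{m}{n}\right)_2$: the divisor $w$ may share factors with $m^4$, the modulus of the $n$-sum varies with $m$, and the coefficients $A_i$ in \eqref{convolution} depend on $\lambda$-adic data of $hm^2$. For the first issue I would write $w_2=\prod\pi^{e_\pi}$ over primes of $m$. The factor $\oquartic{w_2^2}{n}$ is a character of the $m$-part, so one can pull out $w_2\mid m^4$ by grouping $m=m_1m_2$ with $w_2$ supported on $m_1$. Bounding the $m_1$-sum trivially is acceptable because $g_4(hm^2,w)/\N(w)$ gives a saving of $\N(w_2)^{-1/2}$ from \eqref{rel4}. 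The second and third issues are handled by noting that $m$ is odd, so $\mathrm{ord}_\lambda(hm^2)=\mathrm{ord}_\lambda(h)$ and the $A_i$ are uniform in $m$. If the bookkeeping for $w_2$ fails, the fallback is to bound those terms by convexity (Proposition \ref{convexity}); they involve only $\N(m_1)\le U^{\varepsilon}$-sparse contributions.
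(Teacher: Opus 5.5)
Your route is genuinely different from the paper's, and it can be made to work. The paper never writes down a dual sum. It smooths with $e^{-n/X}$, shifts only to $\re(w)=2-2\sigma$, and bounds the leftover integral on $\re(s+w)=2-\sigma\in(\frac34,1)$ through \eqref{bound-Z-tilde}, which reflects it back to $\widetilde Z(\cdot,hm^2,\ell)$ at real part $\sigma$. Because the functional equation preserves $r=hm^2$, summing over $m$ gives a self-referential inequality for $\mathcal Z_U(\sigma+it)=\sum_{\N(m)\le U}\mu^2(m)\widetilde Z(\sigma+it,hm^2,\ell)$. It consists of a large-sieve term $\ll(\N(h)XU)^{\varepsilon}(U+X)$ plus $\bigl(U^2\N(h)(1+|t|)^6X^{-2}\bigr)^{2\sigma-2}\int_{\R}e^{-|y|}\mathcal Z_U(\sigma-it-iy)\,\d y$. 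Evaluating at the height that maximizes $\mathcal Z_U(\sigma+iy)/(1+|y|)^3$ (finite by convexity) and taking $X\asymp U\N(h)^{1/2}(1+|t|)^3$ absorbs the second term. As a result, the paper uses Lemma~\ref{square_free_siev} and the quadratic large sieve only at real part $\sigma>1$. There $\sum|b_n|^2\ll1$, the weight $\N(w)^{2-2\sigma}$ is a gain, the $d$-sum from $\zeta_{\Q(i),\lambda}(4s-3,\ell)$ costs one Cauchy--Schwarz, and the $A_{(j,i)}$ are needed only on one line strictly left of $\re(s)=1$. Your approximate functional equation avoids the bootstrap and the a priori convexity input. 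The price is running the same machinery on the dual side at real part $2-\sigma<1$. Done correctly, it gives $\ll\mathfrak q^{\varepsilon}(U+\mathfrak q^{1/2})$ with $\mathfrak q=\N(h)U^2(1+|t|)^6$, which is the claimed bound.

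Several steps of your sketch need repair.

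(1) The conductor is $\N(hm^2)(1+|t|)^6$. Since $G_\infty(s+u,\ell)/G_\infty(s,\ell)$ has size $(1+|t|)^{3\re(u)}$, your first piece has effective length $L_1=X(1+|t|)^3$, not $X$, while the dual has $L_2=\N(hm^2)(1+|t|)^3/X$. With your choice of $X$, $L_1$ exceeds the balanced value $\N(h)^{1/2}U(1+|t|)^3$ by $(1+|t|)^{3/2}$, and for $\sigma$ near $1$ the exponent $3$ is lost. Take $X\asymp\N(h)^{1/2}U$ instead.

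(2) Lemma~\ref{square_free_siev} is stated for $\sigma\ge1$ and $H(x)\ll(1+x^2)^{-1}$, and both hypotheses fail on the dual side; its proof uses neither, so it still applies. On the dual side the weight $\N(w)^{2\sigma-2}>1$ is not absorbed by $\N(hm^2)^{2-2\sigma}$, since $\N(w)$ can exceed $\N(hm^2)$. What compensates is the shortened range $\N(n)\lesssim L_2/\N(w)$, where $\sum|b_n|^2\asymp(L_2/\N(w))^{2\sigma-2}$. Likewise, the $w$ sharing primes with $m$ are controlled by the reduced ranges $\N(v)\le U/\N(Q)$ and $\N(n)\lesssim L/\N(w)$, as in the paper's $w=\alpha\beta^2$, $m=Qv$. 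They are not controlled by $g_4(hm^2,w)/\N(w)$: that factor is spent in the lemma's Cauchy--Schwarz. Moreover, at $\pi\parallel m$, $\pi\nmid h$, where \eqref{rel4} forces $v_\pi(w)\in\{0,3\}$, it saves only $\N(\pi)^{-1/2}$.

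(3) $L_2$ and the prefactor $\N(hm^2)^{2-2\sigma}$ depend on $\N(m)$, so you must split $m$ dyadically before applying the large sieve.

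(4) Your smoothed sums are those of $Z_{(i,1)}$, so they carry the $d$-convolution, which you omitted. At real part $2-\sigma$ its coefficients $\N(d)^{4\sigma-5}$ are not summable. Only the cutoff $\N(d)^4\lesssim L_2$, used with a weighted Cauchy--Schwarz, keeps the loss logarithmic.

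(5) The $A_{(j,i)}$ may have poles on $\re(s)=1$. So for $n<L_2$ the individual dual weights can only be bounded by shifting to $\re(s+u)=1-\delta$, which costs an extra factor $(L_2/n)^{\sigma-1+\delta}$. This is harmless after the large sieve, but it must be tracked.

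These are exactly the complications the paper's bootstrap sidesteps.
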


\begin{remark}
	We remark that Proposition \ref{prop-Lindelof-on-average} follows immediately from Proposition \ref{prop-Lindelof-on-average-2}, since $\G_\beta(s, r) = \psi_\beta(s+\frac12,r,1)$.
\end{remark}

\begin{proof}[Proof of Proposition \ref{prop-Lindelof-on-average-2}.]
For any fixed $i \in \lbrace 1,\dots 24\rbrace$, and $r \in \Z[i]$,	using~\eqref{convolution},	we write
	\begin{align*} 
		Z_{(i,1)}(s, r, \ell) &= \zeta_{\Q(i), \lambda}(4s-3, \ell) \psi_{(i, 1)}( s, r, \ell) \\
		&= \sum_{\substack {(a, b) \in S_i(r)\\ \beta \in \{ 1, 1 + \lambda^3 \} }} A_i(a,b,r,\ell,\beta,s) 
		\sum_{\substack{c, d \in \Z[i] \\ c \equiv \beta \bmod d \\ d \text{primary}}}  
		\frac{g_4((-1)^a \lambda^{2b} r, c) \N(d)^3}{\N(cd^4)^s}  
		\left( \frac{\overline{c}}{|c|} \right)^\ell \left( \frac{\overline{d}}{|d|} \right)^{4\ell}
	\end{align*}
	and we define
	\begin{equation}
	 	\sum_{\substack{n=1}}^\infty a_n(r, \ell, \beta) n^{-s}:=
	 	\sum_{\substack{c, d \in \Z[i] \\ c \equiv \beta \bmod 4 \\d \; \text{primary}}}  
	 	\frac{g_4(r, c) \N(d)^3}{\N(cd^4)^s}  \left( \frac{\overline{c}}{|c|} \right)^\ell \left( \frac{\overline{d}}{|d|} \right)^{4\ell}.
	\end{equation}
	
	For $X\geq 1$ and $s=\sigma+it$ with $1<\sigma<\frac54$ and using Mellin's inversion, we obtain
	\begin{multline*}
		\sum_{\substack {(a, b) \in S_i(r)\\ \beta \in \{ 1, 1 + \lambda^3 \} }} A_i(a,b,r,\ell,\beta,s) 
		\sum_{n=1}^{\infty} \frac{a_n((-1)^a \lambda^{2b} r,\ell,\beta)}{n^s} e^{-n/X} 
		= \frac{1}{2\pi i} \int_{(2)} Z_{(i,1)}(s+w,r,\ell) X^w \Gamma(w) \d w \\
		= Z_{(i,1)}(s,r,\ell) +  \frac{1}{2\pi i} \int_{(2-2\sigma)} Z_{(i, 1)}(s+w,r,\ell) X^w \Gamma(w) \d w
	\end{multline*}
	by shifting the integral to $(2-2\sigma)$ and picking the pole at $w=0$. 
	We remark that  $Z_{(i,1)}(s+w,r,\ell)$ does not have a pole when $s+w = \tfrac54$ since $\ell \neq 0$.
	Following \cite[p 33-35]{DDDS}, we use the Cauchy--Schwarz inequality to write 
	\begin{multline*}
		|Z_{(i,1)}(s,r,\ell)|^2 
		\ll	 \sum_{\substack {(a, b) \in S_i(r)\\ \beta \in \{ 1, 1 + \lambda^3 \} }} 
		\Big\lvert\sum_{n=1}^{\infty} \frac{a_n((-1)^a \lambda^{2b} r,\ell,\beta)}{n^s} e^{-n/X} \Big\rvert^2  \\
		+ \int_{(2-2\sigma)} X^{2\re(w)}\lvert \Gamma(w)\rvert \lvert\d w\rvert  
		\int_{(2-2\sigma)} |Z_{(i,1)}(s+w,r,\ell)|^2 \lvert \Gamma(w)\lvert \lvert \d w\rvert.
	\end{multline*}
	and summing for $1 \leq i \leq 24$, and over the sign $\pm \ell$  as in \eqref{correct-Z-tilde}, we have
	\begin{align}\label{16july.1}
		\widetilde Z(s,r,\ell)
		&\ll \sum_{\substack{(a,b) \in S(r) \\ \beta,\pm}} \bigg| \sum_{n=1}^{\infty} 
		\frac{a_n((-1)^a \lambda^{2b} r,\pm\ell,\beta)}{n^s} e^{-n/X} \bigg|^2  
		+  X^{4 - 4\sigma} \int_{(2-2\sigma)} \widetilde Z(s+w,r,\ell) |\Gamma(w)| |\d w|,
	\end{align}
	where the sum is over $(a,b)\in S(r) := \bigcup_{i=1}^{24} S_i(r)$ which is again a finite set, with size bounded linearly in terms of the $\lambda$-adic valuation of $r$.
	Since in the integral we have $\frac34<\re(s+w) = 2-\sigma<1$, we apply Stirling's formula (see e.g.~\cite[(5.113)]{IK}) and  \eqref{bound-Z-tilde} to bound the second term on the right-hand side of \eqref{16july.1} and get
	\begin{align*}
		X^{4 - 4\sigma}	\int_{(2-2\sigma)} \widetilde Z(s+w,r,\ell) |\Gamma(w)| |\d w| 
		&\ll_{\sigma} \bigg( \frac{\N(r)(1+|t|)^6}{X^2}\bigg)^{2\sigma-2}  
		\int_{\mathbb R} \widetilde Z(\sigma-iy-it,r,\ell) e^{-|y|} \d y .
	\end{align*}
	Plugging this into \eqref{16july.1}, taking $r=hm^2$, and summing over square-free $m$ with $\N(m)\leq U$, 
	we obtain, using~\eqref{Eq psi beta psi i},
	\begin{multline}\label{13july.1}
		\mathcal{V}^{-2}\sum_{\N(m) \leq U} \mu(m)^2  
		|\psi_\beta(\sigma+it,hm^2,\ell)\zeta_{\mathbb{Q}(i)}(4(\sigma +it)-3,\ell)|^2 \\ 
		\leq \sum_{\substack{ \N(m) \leq U}} \mu(m)^2 \widetilde Z(\sigma+it,m^2h,\ell) 
		\ll_\sigma S_2+S_3,
	\end{multline}
	where
	\begin{align*}
		S_2 	&:= \sum_{\N(m) \leq U} \mu(m)^2 \sum_{\substack{(a,b) \in S(hm^2) \\ \gamma,\pm}}
		\bigg| \sum_{n=1}^{\infty} \frac{a_n((-1)^a \lambda^{2b} hm^2,\pm\ell,\gamma)}{n^s} e^{-n/X} \bigg|^2
	\end{align*}
	and
	\begin{align} \label{bound-S3}
		S_3 	&:= \bigg(\frac{U^2\N(h)(1+|t|^6)}{X^2}\bigg)^{2\sigma-2} 
		\sum_{\N(m) \leq U} \mu(m)^2 \int_{\mathbb R}  \widetilde Z(\sigma-it-iy,m^2h,\ell) e^{-|y|}  \d y.
	\end{align}
	
	We begin with bounding $S_2$. We have
	\begin{align*}
		S_2	&= \sum_{\N(m) \leq U} \mu(m)^2 \sum_{\substack{(a,b) \in S(hm^2) \\ \gamma,\pm}}
		\bigg| 	\sum_{\substack{c, d \in \Z[i] \\ c \equiv \gamma \bmod 4\\d \text{ primary}}}  
		\frac{g_4((-1)^a \lambda^{2b} hm^2, c) \N(d)^3}{\N(cd^4)^s}  \left( \frac{\overline{c}}{|c|} \right)^{\pm\ell} \left( \frac{\overline{d}}{|d|} \right)^{\pm4\ell}e^{-\N(cd^4)/X} \bigg|^2 \\
		&\ll  \sum_{\N(m) \leq U}  \sum_{\substack{(a,b) \in S(hm^2) \\ \gamma,\pm}}
		\sum_{d\equiv 1 \bmod \lambda^3 } \frac{1}{\N(d)^{\sigma}}  \mu(m)^2 
		\bigg| \sum_{c\equiv \gamma \bmod 4} \frac{g_4( (-1)^a \lambda^{2b} hm^2,c)}{\N(c)^{\sigma+it}} 
		\left( \frac{\overline{c}}{|c|} \right)^{\pm\ell} e^{-\N(cd^4)/X} \bigg|^2 ,
	\end{align*}
	applying the Cauchy--Schwarz inequality to the $d$-sum with $\alpha_d = \N(d)^{3 - \frac{7\sigma}{2}}$, 
	which implies that $\sum_{d} \alpha_d^2$ converges since $\sigma > 1$.
	Note that for $m$ square-free the size of the set $S(hm^2)$ can be bounded linearly in terms of $\mathrm{ord}_{\lambda}(h) +2 \ll \N(h)^\varepsilon$. 
	Using Lemma \ref{square_free_siev}, we get
	\begin{align*}
		S_2 &\ll_\varepsilon \max_{\gamma_1 \in \{ 1, 1+\lambda^3 \}} \max_{\pm} \max_{(a,b) \in \Z^2} (\N(h)U^2)^\varepsilon  
		\sum_{d\equiv 1 \bmod \lambda^3 } \frac{1}{\N(d)^{\sigma}}  \sum_{\N(m) \leq U} \mu(m)^2  \\
		& \times
		\sum_{\substack{ w \equiv \gamma_1 \bmod 4 \\ w \mid (hm^2)^2}} \frac{1}{\N(w)^{2\sigma-2}} 
		\bigg| \sum_{n \equiv \gamma \gamma_1 \bmod 4} \mu(n)^2 \frac{g_4(1,n)}{\N(n)^{\sigma+it}} 
		\overline{ \left( \frac{(-1)^a \lambda^{2b} hm^2w^2}{n} \right)_4} 
		\left( \frac{\overline{n}}{|n|} \right)^{\pm\ell}  e^{-\N(wnd^4)/X} \bigg|^2 
	\end{align*}

	Writing $w= \alpha \beta^2$ where $\alpha \equiv \gamma_1 \bmod 4$ and $\beta \equiv 1 \bmod{\lambda^3}$, 
	with $\mu(\alpha)^2=1$, the condition $w|(hm^2)^2$ implies $\alpha \beta |hm^2$ and consequently $q|m^2$, 
	where we denote $q = q(\alpha, \beta, h) := \alpha \beta/(h,\alpha \beta)$. 
	We write $m=Qv$ for $Q = Q(\alpha, \beta, h) :=\text{rad}(q)$ and $v\equiv 1 \bmod \lambda^3$, and obtain
	\begin{align*}
		S_2
		&\ll_\varepsilon (\N(h)U^2)^\varepsilon 
		\max_{\substack{\gamma_1, \gamma_2 \in \{ 1, 1+\lambda^3 \} \\ a, b, \pm}}
		 \sum_{\substack{\alpha \equiv \gamma_1 \bmod4\\ \beta  \equiv 1 \bmod \lambda^3}}
		\frac{\mu^2(\alpha)}{\N(\alpha)^{2\sigma-2}\N(\beta)^{4\sigma-4}} 
		\sum_{d\equiv 1 \bmod \lambda^3 } \frac{1}{\N(d)^{\sigma}} \\
		&\hspace{0.5cm} \times \sum_{\N(v) \leq U/\N(Q)} \mu(v)^2 
		\bigg| \sum_{n \equiv  \gamma_2 \bmod 4} \mu(n)^2 \frac{g_4(1,n)}{\N(n)^{\sigma+it}} 
		\overline{ \left( \frac{(-1)^a \lambda^{2b}  hQ^2v^2 \alpha^2 \beta^4}{n} \right)_4} 
		\left( \frac{n}{|n|} \right)^{\pm\ell}  e^{-\N(\alpha \beta^2nd^4)/X} \bigg|^2 .
	\end{align*}
	
	We now use the asymptotic large sieve to bound the sum over square-free $n$ and $v$, 
	exploiting the oscillation of the {\it quadratic} character
	\begin{align}\label{17july.1} 
		\Big( \frac{v^2}{n} \Big)_4 = \Big( \frac{v}{n} \Big)_2.
	\end{align}
	Namely, denoting
	\begin{align*}
		b_n = b_n(\sigma+it,a,b, h,\alpha,\beta) 
		:= \frac{g_4(1,n)}{\N(n)^{\sigma+it}} \overline{ \left( \frac{(-1)^{a} \lambda^{2b}  h Q^2 \alpha^2 \beta^4}{n} \right)_4} 
		\left( \frac{n}{|n|} \right)^{\pm\ell}  e^{-\N(\alpha \beta^2 n d^4)/X} ,
	\end{align*}
	and remarking that only the contribution from $\N(n \alpha \beta^2d^4)\leq X^{1+\varepsilon}$ is non-negligible, we have
	\begin{align*}
		S_2
		\ll_\varepsilon (\N(h)U^2)^\varepsilon 
		&\max_{\substack{\gamma_1, \gamma_2 \in \{ 1, 1+\lambda^3 \} \\ a, b, \pm}}
		\sum_{\substack{\N(\alpha \beta^2 d^4) \leq X^{1+\varepsilon} \\
		\alpha \equiv \gamma_1 \bmod 4 \\ \beta,d  \equiv 1 \bmod \lambda^3}}
		\frac{\mu^2(\alpha)}{\N(\alpha \beta^2)^{2\sigma-2} \N(d)^{\sigma} } \\
		&\times
		\sum_{\N(v) \leq U/\N(Q)} \mu(v)^2 \bigg| 
		\sum_{\substack{n \equiv  \gamma_2 \bmod 4 \\ \N(\alpha \beta^2nd^4)\leq X^{1+\varepsilon}}} \mu(n)^2 b_n  
		\left( \frac{v}{n} \right)_2  \bigg|^2 .
	\end{align*}
	An application of the quadratic large sieve \cite[Theorem 1]{HB} then yields 
	\begin{align*}
		\sum_{\N(v) \leq U/\N(Q)} \mu(v)^2 \bigg| \sum_{\N(n)\leq X^{1+\varepsilon}/\N(\alpha \beta^2d^4)} 
		\mu(n)^2 b_n { \left( \frac{v}{n} \right)_2} \bigg|^2  
		\ll_\varepsilon (XU)^\varepsilon \bigg(\frac{U}{\N(Q)} +  \frac{X}{\N(\alpha \beta^2d^4)} \bigg)
	\end{align*}
	for every $\varepsilon>0$, since for $\sigma >1$ 
	\begin{align*}
		\sum_{\N(n)\leq X^{1+\varepsilon}/\N(\alpha \beta^2d^4)} |b_n|^2 
		\ll  \sum_{\N(n) \leq X^{1+\varepsilon}/\N(\alpha \beta^2d^4)} \frac{1}{\N(n)^{2\sigma-1}} \ll_\sigma 1.
	\end{align*}
	
	Therefore, 
	\begin{align*}
		S_2
		&\ll_\varepsilon (\N(h)XU^2)^{2\varepsilon} \sum_{\N(\alpha \beta^2) \leq X^{1+\varepsilon}}		
		\frac{ \mu^2(\alpha) }{\N(\alpha \beta^2)^{2\sigma-2}} 
		\sum_{\N(d^4) \leq X^{1+\varepsilon}/\N(\alpha \beta^2)} \frac{1}{\N(d)^{\sigma}} 
		\bigg(\frac{U}{\N(Q)} +  \frac{X}{\N(\alpha\beta^2d^4)}  \bigg)\\
		&\ll_\varepsilon U (\N(h)XU^2)^{2\varepsilon}   \sum_{\N(\alpha \beta^2) \leq X^{1+\varepsilon}}		
		\frac{ \mu^2(\alpha) }{\N(\alpha \beta^2)^{2\sigma-2} \N(Q)} + X (\N(h)U^2)^{2\varepsilon} 
	\end{align*}
	since the sum over $d$ converges. Now,
	$$ \N(Q) = \N\bigg(\text{rad}\bigg(\frac{\alpha\beta}{(h,\alpha\beta)}\bigg)\bigg) 
	\geq  \N\bigg(\mathrm{rad}\bigg(\frac{\alpha\beta}{(h,\alpha)(h,\beta)}\bigg) \bigg),$$
	and we consider
	\begin{align*}
		C(h) := \sum_{\alpha, \beta \in \Z[i]}
		\frac{\mu^2(\alpha)  }{\N(\alpha \beta^2)^{2\sigma-2} \, 
		\N\bigg(\mathrm{rad}\bigg( \frac{\alpha}{(h,\alpha)} \frac{\beta}{(h,\beta)}\bigg)\bigg)}.
	\end{align*}
	Since $C(h)$ is a doubly-multiplicative sum, we can write it as an Euler product.
	For each prime ideal $\fp$ of $\Z[i]$,  we have $v_{\fp}(\alpha) \in \lbrace0,1\rbrace$, and 
	writing $v_{\fp}(\beta) = n \geq 0$, we obtain for any $\varepsilon > 0$ 
	\begin{align*}
		C(h) =& \prod_{\fp \nmid h} \left( 1  + \sum_{n=1}^\infty  \frac{1}{\N(\fp)^{1 + 2 n (2\sigma - 2)}}  
		+ \sum_{n = 0}^\infty  \frac{1}{\N(\fp)^{1 +(2 n+1) (2\sigma - 2)}} \right) \\
		&  \times  \prod_{\fp \mid h} \left( 1  
		+ \sum_{n=1}^{v_\fp(h)}  \frac{1}{\N(\fp)^{2 n (2\sigma - 2)}}  
		+ \sum_{n=v_\fp(h)+1}^\infty  \frac{1}{\N(\fp)^{1 + 2 n (2\sigma - 2)}}  
		\right)\left( 1 + \frac{1}{\N(\fp)^{(2\sigma -2)}} \right) \\
		\ll&_{\sigma,\varepsilon}  \N(h)^{\varepsilon}
	\end{align*}
	since $\sigma > 1$.
	This gives
	\begin{equation}\begin{split}\label{13july.2}
		S_2 &\ll_{\sigma,\varepsilon} (\N(h)XU^2)^{2\varepsilon} \Big(U +  X  \Big).
	\end{split}\end{equation}
	
	Hence, Equations \eqref{13july.1}, \eqref{bound-S3} and \eqref{13july.2} yield
	\begin{equation}\begin{split}\label{13july.3}
			\mathcal Z_U(\sigma+it,h,\ell) 
			:= & \sum_{\N(m) \leq U} \mu(m)^2 \widetilde Z(\sigma +it,m^2h,\ell) \\
			\leq & \; C_2(\sigma,\varepsilon)\cdot (\N(h)XU^2)^{2\varepsilon} \cdot \Big(U +  X  \Big)\\
			&+ C_3(\sigma) \cdot \Big(U^2\N(h)(1+|t|^6)X^{-2}\Big)^{2\sigma-2} \cdot \mathcal I_U(\sigma+it,h,\ell)
	\end{split}\end{equation}
	where
	\begin{align*}
		\mathcal I_U(\sigma+it,h,\ell) :=\int_{\mathbb R} e^{-|y|} \mathcal Z_U(\sigma-it-iy,h, \ell)  dy.
	\end{align*}
	
	We introduce the  quantity
	\begin{align*}
		\sup_{y \in \mathbb R} \frac{\mathcal Z_U(\sigma+iy,h,\ell)}{(1+|y|)^3} 
	\end{align*}
	which is finite by the convexity bound~\eqref{convexbd}, and is attained for a real $\tilde t = \tilde t(\sigma,h,\ell;U) $. 
	Then, for every $y\in\mathbb R$,
	\begin{align}\label{18july.1}
		\frac{\mathcal Z_U(\sigma+i\tilde t,h,\ell)}{(1+|\tilde t|)^3} \geq \frac{\mathcal Z_U(\sigma+iy,h,\ell)}{(1+|y|)^3} .
	\end{align}
	As a consequence,
	\begin{align*}
		\mathcal I_U(\sigma+i\tilde t,h,\ell) 
		\leq \mathcal Z_U(\sigma+i\tilde t,h, \ell) \int_{\mathbb R} e^{-|y|} 
		\bigg( \frac{1+|\tilde t+y|}{1+|\tilde t|}\bigg)^3  dy \leq C_1 \cdot \mathcal Z_U(\sigma+i\tilde t,h, \ell).
	\end{align*}
	Choosing $t=\tilde t$ and $X= (2C_1C_3(\sigma))^{1/(4\sigma-4)} U\sqrt{\N(h)} (1+|\tilde t|)^3$, 
	Equation \eqref{13july.3} then reads
	\begin{equation}\begin{split}\notag
		\mathcal Z_U(\sigma+i\tilde t,h,\ell) 
		\leq & \; C_2(\sigma,\varepsilon)\cdot (\N(h)XU^2)^{2\varepsilon} \cdot (U +  X  )\\
		&+ C_1 C_3(\sigma) \cdot \Big(U^2\N(h)(1+|\tilde t|)^6X^{-2}\Big)^{2\sigma-2} 
		\cdot \mathcal Z_U(\sigma+i\tilde t,h, \ell)\\
		\leq & \; C_2(\sigma,\varepsilon)(2C_1C_3(\sigma))^{\frac{2\varepsilon}{4\sigma-4}} 
		\cdot (\N(h)^{\frac{3}{2}}U^3  (1+|\tilde t|)^3)^{2\varepsilon} U
		\big(1 +  (2C_1C_3(\sigma))^{\frac{1}{4\sigma-4}} \sqrt{\N(h)} (1+|\tilde t|)^3 \big)\\
		&+ \frac12  \mathcal Z_U(\sigma+i\tilde t,h, \ell)
	\end{split}\end{equation}
	and then
	\begin{equation}\begin{split}\notag
		\mathcal Z_U(\sigma+i\tilde t,h,\ell) 
		\ll_{\sigma,\varepsilon} & \Big(\N(h)U(1+|\tilde t|)\Big)^{6\varepsilon}  U\sqrt{\N(h)}(1+|\tilde t|)^3,
	\end{split}\end{equation}
	which, by \eqref{18july.1} gives
	\begin{equation}\notag
		\mathcal Z_U(\sigma+i t,h,\ell) 
		\ll_{\sigma,\varepsilon}  \Big(\N(h)U(1+|t|)\Big)^{6\varepsilon}  U\sqrt{\N(h)}(1+|t|)^3 
	\end{equation}
	for every $t\in \mathbb R$. Combining this and~\eqref{13july.1} with the bound $\lvert \zeta_{\mathbb Q(i),\lambda} (\sigma + it,\ell) \rvert \geq \frac{\zeta_{\mathbb Q(i),\lambda}(2\sigma,0)}{\zeta_{\mathbb Q(i),\lambda}(\sigma,0)}\gg_\sigma 1$ for $s=\sigma+it$ and $\sigma>1$ finishes the proof of Proposition~\ref{prop-Lindelof-on-average-2}.
\end{proof}

\section{Calculating Root Numbers} \label{root-number}

In this section, we compute the sign of the functional equation for the $L$-functions $L(s,\xi_d)$ as $d$ takes odd fourth-power free values, giving a proof for Lemma~\ref{lemma-sign-s1}. Recall that a Hecke character $\xi$ on $\mathbb{Z}[i]$ with conductor $\mathfrak{f}$ can be given as a product of an infinite part $\xi_\infty$, which is a multiplicative character on $\mathbb C^\times$ and a finite part $\xi_{\mathrm{fin}}$, which is a character of the finite group  $(\mathbb Z[i]/\mathfrak{f})^\times$.
Following~\cite[$(3.85)$, $(3.86)$]{IK}, we will use the formula
\begin{align}\label{Eq Formula sign}
	W(\xi_{d}) = -i \N(\mathfrak{f}_{d})^{-\frac12} \xi_{d,\infty}(\gamma_{d}) \sum_{x \in \mathbb{Z}[i]/\mathfrak{f}_{d}}\xi_{d,\mathrm{fin}}(x) e^{2\pi i \tr\left(\tfrac{x}{\gamma_{d}}\right)}
\end{align}
where $\gamma_{d} \in \Z[i]$ is any generator of the ideal $2 \mathfrak{f}_{d}$.
Similarly to ~\cite[Lemma~2.3]{DDW}, we can write
$\xi_d = \xi_{\infty} \xi_{d,\mathrm{fin}}$ with
$  \xi_{\infty} : \alpha \mapsto \frac{\alpha}{\lvert \alpha \rvert}$ and
\[\xi_{d,\mathrm{fin}} =
\overline{\chi}_{(d)}\cdot \eta_{d},
\]
where $\eta_d$ is a character modulo a power of the ramified prime $(\lambda)$, depending on the congruence class of $d \bmod 8$.
We introduce some notations for quadratic and quartic characters modulo powers of $(\lambda)$.
Let
$\chi_{(2)}:\left(\Z[i]/(2)\right)^{\times} \longrightarrow \{\pm 1\}$ be the character given by
\begin{align} \label{def-2}
	\quad  \chi_{(2)}(\alpha) &:= \begin{cases} 1 &\text{ for } \alpha \equiv 1 \bmod 2 \\ -1 &\text{ for }  \alpha \equiv i \bmod 2. \end{cases}
\end{align}
Let $\chi_{(\lambda^3)}:\left(\Z[i]/(\lambda^3)\right)^{\times} \longrightarrow \{\pm 1, \pm i\}$ be the character given by
\begin{align}
	\label{def-2+2i}
	\chi_{(\lambda^3)}(\alpha)&:= \begin{cases} 1 &\text{ for }  \alpha \equiv 1 \bmod {\lambda^3} \\
		-1 &\text{ for }  \alpha \equiv  -1 \bmod {\lambda^3} \\
		-i &\text{ for }  \alpha \equiv i \bmod {\lambda^3} \\
		i &\text{ for }  \alpha \equiv -i  \bmod {\lambda^3}.
	\end{cases}
\end{align}
And let $\chi_{(4)}:\left(\Z[i]/(4)\right)^{\times} \longrightarrow \{\pm 1, \pm i\}$ be the character given by
\begin{align}\label{def-4}
	\chi_{(4)}(\alpha) &:= \begin{cases}
		1 &\text{ for }  \alpha \equiv 1 \text{ or } -1 + \lambda^3\bmod 4\\
		-1 &\text{ for }  \alpha \equiv -1 \text{ or } 1 + \lambda^3 \bmod 4\\
		i &\text{ for }  \alpha \equiv i \text{ or } -i + \lambda^3 \bmod 4\\
		-i &\text{ for }  \alpha \equiv -i \text{ or } i + \lambda^3 \bmod 4.
	\end{cases}
\end{align}

The characters $\chi_{(2)}, \chi_{(\lambda^3)}$ and $\chi_{(4)}$ may be extended to $\mathbb Z[i]$ where they are primitive with conductors given by $(2)$, $(\lambda^3)$, and $(4)$, respectively.  We further remark that $\chi_{(2)} = \chi_{(\lambda^3)}^2 = \chi_{(4)}^2$ and that for~$\alpha \in \Z[i]$, with $(\alpha,\lambda) = 1$, the primary generator of $(\alpha)$ is given by
\begin{equation}\label{boldalpha}
	\boldsymbol{\alpha} =  {\chi}_{(\lambda^3)}(\alpha)  \; \alpha.
\end{equation}
With these notations, the proof of~\cite[Lemma~2.3]{DDW} counting prime divisors with multiplicities gives
\begin{align}\label{Eq def eta}
	\eta_{d} &:=
	\begin{cases}
		\chi_{(\lambda^3)} & \textnormal{ when } d\equiv 1 \bmod 8\\
		\chi_{(\lambda^3)}\chi_{(2)} & \textnormal{ when } d\equiv 5 \bmod 8 \\
		\overline{\chi}_{(4)}\chi_{(2)} & \textnormal{ when } d\equiv 3 \bmod 8 \\
		\overline{\chi}_{(4)} & \textnormal{ when }d\equiv 7 \bmod 8.
	\end{cases}
\end{align}
Observe that $\eta_{d}$ is then a primitive character modulo $(g)$ with  $$g:=\begin{cases}
	\lambda^3 & \textnormal{ when } d\equiv 1\bmod 4\\
	4 & \textnormal{ when } d\equiv 3 \bmod 4.\end{cases}$$
We now have all the necessary information to use the formula~\eqref{Eq Formula sign} and prove Lemma~\ref{lemma-sign-s1}.		
	
	\begin{proof}[Proof of Lemma \ref{lemma-sign-s1}]	

		Write $\rad(d) = \prod_j p_j >0$, and $d= \sgn(d)\prod_j p_j^{e_j}$, where $p_j$ run through the distinct positive {\it rational} primes dividing~$d$. 
		Since $(g,d)=1$, by the Chinese remainder theorem there exists a ring isomorphism
		\[\Z[i]/(g) \times \prod_{p_{j}|d}\Z[i]/(p_{j}) \rightarrow \Z[i]/(g\cdot \rad(d)) \quad (x_{0},(x_{j})_{j}) \mapsto u x_{0}+\sum_{j}v_{j}x_{j},\]
		where $u \equiv 1\bmod g$ and $u \equiv 0 \bmod{\rad(d)}$, while $v_{j} \equiv 1 \bmod {p_{j}}$  and  $v_{j} \equiv 0 \bmod {\frac{g\cdot \rad(d)}{p_{j}}}$ for all $j$. Upon choosing $\gamma_{d} = 2g\cdot \rad(d)$, 
		we find that
		\begin{align}
			\begin{split}
				&\sum_{x \in \mathbb{Z}[i]/\mathfrak{f}_{d}}\xi_{d,\mathrm{fin}}(x) e^{2\pi i \tr\left(\tfrac{x}{\gamma_{d}}\right)} = \sum_{x \in \mathbb{Z}[i]/(g\cdot \rad(d))}\overline{\chi}_{(d)}(x)\cdot \eta_{d}(x) e^{2\pi i \tr\left(\tfrac{x}{2g\cdot \rad(d)}\right)}\\
				&= \sum_{x_{0} \in \mathbb{Z}[i]/(g)}\prod_{p_{j}|d}\sum_{x_{j} \in \mathbb{Z}[i]/(p_{j})} \overline{\chi}^{e_j}_{(p_{j})}\Big(ux_{0}+\sum_{j}v_{j} x_{j}\Big)\eta_{d}\Big(ux_{0}+\sum_{j}v_j x_{j}\Big)  e^{2\pi i \tr\left(\tfrac{u x_{0} + \sum_{j}v_{j}x_{j}}{2g\cdot \rad(d)}\right)}\\
				&= \sum_{x_{0} \in \mathbb{Z}[i]/(g)}\eta_{d}\left(ux_{0}\right)e^{2\pi i \tr\left(\tfrac{u x_{0}}{2g\cdot \rad(d)}\right)}\prod_{p_{j}|d}\sum_{x_{j} \in \mathbb{Z}[i]/(p_{j})} \overline{\chi}^{e_j}_{(p_{j})}\left(v_{j} x_{j}\right)  e^{2\pi i \tr\left(\tfrac{v_{j}x_{j}}{2g\cdot \rad(d)}\right)}.
			\end{split}
		\end{align}
		
		Applying the change of variables $\alpha = u x_{0}/\rad(d)$ and $\beta_j =
		v_j x_{j}p_{j}/g\cdot \rad(d)$, we then find that
		\begin{align}\label{root_number_split}
			\begin{split}
				W(\xi_{d}) &=  \frac{-i}{|g\cdot \rad(d)|} 
				\frac{g}{|g|}
				\eta_{d}(\rad(d))\sum_{\alpha \in \mathbb{Z}[i]/(g)} \eta_{d}(\alpha) e^{2\pi i \tr\left(\tfrac{\alpha}{2g}\right)} \\
				&\phantom{=}\times  \prod_{\substack{p_{j}|d\\ p_{j} > 0}}
				\overline{\chi}^{e_j}_{(p_{j})}\left(\frac{g\cdot \rad(d)}{p_{j}}\right)\sum_{\beta_j \in \mathbb{Z}[i]/(p_{j})}\overline{\chi}^{e_j}_{(p_{j})}(\beta_j) e^{2\pi i \tr\left(\tfrac{\beta_j}{2p_{j}}\right)} \\
				&= W(\eta_{d}, 2) \times \eta_{d}(\rad(d)) \times \overline{\chi}_{(d)}(g) 
				\times \prod_{\substack{p_{j}|d\\ p_{j} > 0}} W(\overline{\chi}^{e_j}_{(p_{j})}, p_j) 
			\end{split}
		\end{align}
		where
		\begin{align*}
			W(\eta_{d}, 2) &:= \frac{-i}{|g|} 
			\frac{g}{|g|}
			\sum_{\alpha \in \mathbb{Z}[i]/(g)} \eta_{d}(\alpha) e^{2\pi i \tr\left(\tfrac{\alpha}{2g}\right)} \\
			W(\overline{\chi}^{e}_{(p)}, p) &:= \frac{1}{p}\sum_{x \in \mathbb{Z}[i]/(p)}\overline{\chi}^{e}_{(p)}(x) e^{2\pi i \tr\left(\tfrac{x}{2p}\right)},
		\end{align*}
		and where we have used  \eqref{chi_at_integers}.
		
		We first study the contribution of the factor at~$2$.
	If  $d \equiv 1 \bmod 4$, then we compute
		\begin{align} 
		 W(\eta_{d}, 2)
			=  \begin{cases}
				1 & \textnormal{ if }  d\equiv 1 \bmod 8 \\
				-i & \textnormal{ if }  d\equiv -3 \bmod 8.
			\end{cases}
		\end{align}
		Similarly when $d\equiv 3 \bmod 4$, we have
		\begin{equation} \label{gauss_sum_3mod4}
			  W(\eta_{d}, 2) = 1. 
		\end{equation}
		Let us now study the contribution at a rational odd prime $p$. 
	We have
		\begin{align}
			W(\overline{\chi}^{e}_{(p)}, p) =\begin{cases}
				(-1)^{e}& \textnormal{ when }p \equiv \pm 3 \bmod 8\\
				1 & \textnormal{ when }p \equiv \pm 1 \bmod 8,
			\end{cases}
		\end{align}
		and therefore for odd fourth-power-free $d \in \mathbb{Z}$,
		\begin{equation}\label{root_number_product}
			\prod_{\substack{p_{j}|d\\ p_{j} > 0}} W(\overline{\chi}^{e_j}_{(p_j)}, p_j)= \begin{cases}
				-1& \textnormal{ when }d \equiv \pm 3 \bmod 8\\
				1 & \textnormal{ when }d \equiv \pm 1 \bmod 8.
			\end{cases}
		\end{equation}
	
	  When  $d \equiv 1 \bmod 8$, it follows from
		\eqref{Eq def eta}, \eqref{root_number_split} and 
		\eqref{root_number_product}, and~\eqref{Eq suppl quartic law} that
		\begin{align*}
			W(\xi_{d})	& =W(\eta_{d}, 2) \times \eta_{d}(\rad(d)) \times \overline{\chi}_{(d)}(g) 
			\times \prod_{\substack{p_{j}|d\\ p_{j} > 0}} W(\overline{\chi}^{e_j}_{(p_{j})}, p_j) \\
		&	= \chi_{(\lambda^3)}(\rad(d))  \overline{\chi}_{(d)}(\lambda^3)  \\
			&= (-1)^{\frac{\rad(d) -1}{2}}
			i^{\frac{(d-1)}{4}}.
		\end{align*}
		
		Similarly, when $d \equiv 5 \bmod 8$, we find that
		\begin{align*}
			W(\xi_{d})	&= i \cdot \chi_{(\lambda^3)}(\rad(d))\chi_{(2)}(\rad(d))			\overline{\chi}_{(d)}(\lambda^3)\\
			&= (-1)^{\frac{\rad(d) -1}{2}}  i^{\frac{d+3}{4}}\cdot \\
		\end{align*}
		Finally, when $d \equiv 3 \bmod 8$, we note by \eqref{root_number_split}, \eqref{gauss_sum_3mod4}, \eqref{root_number_product}, that
		\begin{align*}
			W(\xi_{d})&= -\overline{\chi}_{(4)}(\rad(d))\chi_{(2)}(\rad(d)) =(-1)^{\frac{\rad(d)+1}{2}}
		\end{align*}
		and if $d \equiv 7 \bmod 8$, we have
		\begin{align*}
			W(\xi_{d})	= \overline{\chi}_{(4)}(\rad(d)) = (-1)^{\frac{\rad(d)-1}{2}}.
		\end{align*}
		One can then split according to the congruence class of $d \bmod 16$ to obtain the statement of Lemma~\ref{lemma-sign-s1}. 
	\end{proof}

\end{document}